\newcommand{\tri}{\operatorname{tri}}
\newcommand{\hex}{\operatorname{hex}}
\newcommand{\fcc}{\operatorname{fcc}}
\newcommand{\Dfour}{\operatorname{D4}}
\newcommand{\meas}{\operatorname{meas}}
\newcommand{\supp}{\operatorname{supp}}
\newcommand{\vol}{\operatorname{vol}}
\newcommand{\mix}{\operatorname{mix}}
\newcommand{\bH}{\mathbb{H}}
\newcommand{\bR}{\mathbb{R}}
\newcommand{\bT}{\mathbb{T}}
\newcommand{\zed}{\mathbb{Z}}
\newcommand{\one}{\mathbf{1}}
\newcommand{\E}{\mathbf{E}}
\newcommand{\Prob}{\mathbf{Prob}}
\newcommand{\sA}{\mathscr{A}}
\newcommand{\sF}{\mathscr{F}}
\newcommand{\sH}{\mathscr{H}}
\newcommand{\sT}{\mathscr{T}}
\newcommand{\sI}{\mathscr{I}}
\newcommand{\ueps}{\underline{\epsilon}}
\newcommand{\ua}{\underline{a}}
\newcommand{\un}{\underline{n}}
\newcommand{\ut}{\underline{t}}
\newcommand{\ux}{\underline{x}}
\newcommand{\sP}{{\mathscr{P}}}
\newcommand{\fS}{{\mathfrak{S}}}
\newtheorem{theorem}{Theorem}
\newtheorem*{theorem*}{Theorem}
\newtheorem{lemma}[theorem]{Lemma}
\theoremstyle{remark}
\title[Sandpiles]{The spectrum of the abelian sandpile model}
\author{Robert Hough}
\address[Robert Hough]{Department of Mathematics, Stony Brook University, Stony Brook,
NY, 11794}
\email{robert.hough@stonybrook.edu}
\author{Hyojeong Son}
\address[Hyojeong Son]{Department of Mathematics, Stony Brook University, Stony Brook,
NY, 11794}
\email{hyojeong.son@stonybrook.edu}
\subjclass[2010]{Primary 82C20, 60B15, 60J10}
\keywords{Abelian sandpile model, random walk on a group, spectral gap,  cut-off
phenomenon}
\thanks{This material is based upon work supported by the National Science
Foundation under agreements No.\ DMS-1712682 and DMS-1802336. Any opinions, findings and
conclusions or recommendations expressed in this material are those of the
authors and do not necessarily reflect the views of the National Science
Foundation.}
\thanks{Hyojeong Son was supported by a fellowship from the Stony Brook Summer Research Fund.}
\begin{document}

\begin{abstract}
In their previous work, the authors studied the abelian sandpile model on graphs constructed from a growing piece of a plane or space tiling, given periodic or open boundary conditions, and identified \emph{spectral factors} which govern the asymptotic spectral gap and asymptotic mixing time.  This article gives a general method of determining the spectral factor either computationally or asymptotically and performs the determination in specific examples.  
\end{abstract}

\maketitle

\section{Introduction} When considering Markovian dynamics in a  system, important quantities in describing the behavior are the spectral gap, or difference between the largest and second largest eigenvalue of the transition kernel, and the convergence profile to equilibrium including the mixing time and transition window between approximately non-uniform and stationarity.  A central topic in the mixing of large systems is the \emph{cut-off phenomenon}, in which, as the system grows, the transition period to equilibrium is on an asymptotically shorter time scale than the mixing time \cite{D96}.

Sandpile dynamics on a graph, which are Markovian, are an important model of self-organized criticality, which have been studied extensively in the statistical physics literature since their introduction by Bak, Tang and Wiesenfeld \cite{BTW88}, see \cite{D89}, \cite{DM92}, \cite{BIP93}, \cite{P94}, \cite{I94}, \cite{DFF03}, \cite{LH02},  \cite{PR05}, \cite{JPR06}, \cite{SV09}, \cite{LP10}, \cite{DS10}, \cite{FLW10},  \cite{G16}, \cite{KW16}, \cite{NOT17}, and \cite{LPS16}, \cite{PS13}.  The article \cite{KS16} provides an accessible introduction, and computes several sandpile statistics for varying graph geometries.    In \cite{HJL17} and \cite{HS19} the authors evaluated the spectral gap, asymptotic mixing time and proved a cut-off phenomenon in sandpile dynamics on a growing piece of a plane or space tiling given periodic or open boundary conditions.  In particular, in the article \cite{HS19}  \emph{spectral factors} related to the harmonic modulo 1 functions on the tiling are identified, and these factors are demonstrated to control the spectral gap and asymptotic mixing time of the dynamics.  The purpose of this article is to describe a general method of calculating the spectral factors numerically, and to perform this calculation in specific examples.  A consequence of these calculations is that, while in two dimensions, the tilings considered have the same asymptotic mixing time to top order with either open or periodic boundary, for the $\Dfour$ lattice in four dimensions, the asymptotic mixing time with an open boundary is longer, and is controlled by the configuration of the sandpile near its 3 dimensional boundary.  Also, it is demonstrated that for all $d$ sufficiently large, for the $\zed^d$ lattice the asymptotic mixing time with periodic and open boundary is the same to top order.

\subsection{Precise statement of results}
A $d$ dimensional periodic plane or space tiling is a connected graph $\sT =(V,E)$ embedded in $\bR^d$ which is periodic in a $d$ dimensional lattice $\Lambda$. Denote $\Delta$  the graph Laplacian,
\[
 \Delta f(v) = \deg(v) f(v) - \sum_{(v,w) \in E}f(w).
\]
Given a function $f\in \ell^2(\sT)$, say that $f$ is \emph{harmonic modulo 1} if $\Delta f \equiv 0 \bmod 1$ and denote the set of such functions $\sH^2(\sT)$. Let $C^1(\sT)$ denote the set of integer valued functions on $\sT$ which are finitely supported and have sum 0.
In \cite{HS19} the \emph{spectral parameter} of a tiling is defined to be
\begin{equation}
 \gamma = \inf \left\{ \sum_{x \in \sT} 1 - \cos(2\pi \xi_x):\Delta \xi \in C^1(\sT), \xi \not \equiv 0 \bmod 1\right\}.
\end{equation}
This parameter is shown to govern the asymptotic spectral gap of sandpile dynamics for graphs with periodic boundary condition, and governs the asymptotic mixing time for the same graphs in dimensions at most 4. 

Our first result computes the spectral factor for the triangular ($\tri$) and honeycomb ($\hex$) tilings in two dimensions and the face centered cubic ($\fcc$) tiling in three dimensions.

\begin{theorem}\label{spectral_gap_calc_theorem}
  The triangular, honeycomb, 
 and face centered cubic tilings have periodic boundary spectral parameters\footnote{The digit in parenthesis indicates the last significant digit.}
 \begin{align*}
 \gamma_{\tri} &= 1.69416(6)\\
 \notag \gamma_{\hex} &= 5.977657(7)\\
 \notag \gamma_{\fcc} &= 0.3623(9).
 \end{align*}
\end{theorem}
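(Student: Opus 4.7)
The plan is to reformulate the infimum defining $\gamma$ as an optimization over finite integer sources $f = \Delta\xi$ on $\sT$, evaluate each candidate $\xi$ to high precision by Fourier inversion on a fundamental domain of $\sT/\Lambda$, and numerically minimize the cost $\sum_x (1-\cos 2\pi\xi(x))$ while rigorously bounding both the tail of this sum and the contribution of multi-source configurations.

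Since $f \in C^1(\sT)$ is integer-valued, finitely supported, and sums to zero, $f$ decomposes as an integer linear combination of edge dipoles $\delta_v - \delta_w$ for $(v,w) \in E$. For each edge let $\Phi_{vw}$ denote the unique decaying potential satisfying $\Delta \Phi_{vw} = \delta_v - \delta_w$; this exists by standard lattice Green's function theory, with decay $|x|^{-(d-1)}$ in dimensions $d \ge 3$ and $|x|^{-1}$ in $d=2$ (the latter via the two-dimensional potential kernel). Picking a fundamental domain of $k$ vertices and letting $\widehat{\Delta}(\theta)$ denote the $k \times k$ Bloch Laplacian for $\theta$ in a fundamental cell $\widehat{\Lambda}$ of the dual lattice, one has
\[
\Phi_{vw}(x) = \int_{\widehat{\Lambda}} \bigl[\widehat{\Delta}(\theta)^{+}(\widehat{\delta_v} - \widehat{\delta_w})(\theta)\bigr]\, e^{2\pi i \theta \cdot x}\, d\theta,
\]
where $\widehat{\Delta}(\theta)^{+}$ is the Moore--Penrose pseudoinverse and the $\theta = 0$ singularity is integrable because the source has mean zero. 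I would evaluate this integral by a high-order quadrature rule on $\widehat{\Lambda}$, obtaining $\Phi_{vw}(x)$ to many digits on a large ball $B_R$, then sum $1-\cos 2\pi\Phi_{vw}(x)$ over $B_R$ and bound the tail via the decay estimate together with $1-\cos 2\pi u \le 2\pi^2 u^2$. For each of $\tri$, $\hex$, $\fcc$ the symmetry group acts transitively on unordered edges, so this computation needs to be carried out for a single orbit representative and delivers a rigorous numerical upper bound on $\gamma$.

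The principal obstacle is the matching lower bound: one must rule out the possibility that a more elaborate integer source $f$ produces a $\xi$ of smaller cost via constructive interference of elementary potentials. The approach I would take is Parseval: on the quadratic surrogate one has
\[
\sum_x \xi(x)^2 = \int_{\widehat{\Lambda}} \bigl\|\widehat{\Delta}(\theta)^{+}\widehat{f}(\theta)\bigr\|^2\, d\theta,
\]
a positive-definite integer quadratic form in $f$ whose smallest nonzero value is determined by a finite lattice short-vector enumeration. Combined with the elementary estimate $1-\cos 2\pi u \ge 8u^2$ valid on $|u| \le 1/2$, this reduces the lower bound to a finite integer search over sources of bounded $\ell^1$-norm. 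The delicate point is the tightness of the quadratic bound: to close the gap one would check from the numerical data that the extremizer has all values $|\xi(x)|$ safely bounded away from $1/2$, so that the quadratic approximation is sharp enough. Combining the quadrature-based upper bound with this integer-programming lower bound, both matched by the single-edge dipole configuration, yields the stated digits.
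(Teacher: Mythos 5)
Your general framework—indexing harmonic-mod-1 functions by integer prevectors, recovering $\xi$ from the Green's function, computing $\|\xi\|_2^2$ by Parseval, and sandwiching $1-\cos 2\pi u$ by quadratics—is the same skeleton the paper uses. But there are two genuine gaps that make the proof as written fail, one of them fatal in dimension 2.

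\textbf{The edge dipole gives an infinite upper bound for $\tri$ and $\hex$.} You decompose $f$ into edge dipoles $\delta_v - \delta_w$ and propose to sum $1 - \cos 2\pi\Phi_{vw}(x)$ over a large ball plus a tail bound. In $d=2$, $\Phi_{vw}(x) \sim |x|^{-1}$, so $1-\cos 2\pi\Phi_{vw}(x) \sim 2\pi^2|x|^{-2}$, which is \emph{not} summable over $\zed^2$; your proposed upper bound is $+\infty$. The paper handles this by requiring $\nu \in C^2(\sT)$ in dimension 2 (mean zero \emph{and} first moment zero), precisely because $g_\nu \in \ell^2$ iff $\nu \in C^\rho$ with $\rho=2$ when $d=2$. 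Consequently the extremizer for $\tri$ is the quadrupole $\nu^* = \delta_0 - \delta_{v_1} - \delta_{v_2} + \delta_{v_1+v_2}$ (giving faster decay $|x|^{-2}$ for $\xi$), and for $\hex$ it is a 6-point configuration around a hexagon. Your claim that edge-transitivity reduces the upper bound to ``a single orbit representative'' is therefore wrong in both 2D cases: no single edge dipole is even admissible.

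\textbf{The short-vector lower bound is not finite as stated.} You write that $\sum_x \xi(x)^2$ is a positive-definite integer quadratic form in $f$ ``whose smallest nonzero value is determined by a finite lattice short-vector enumeration.'' But $f$ ranges over an infinite-dimensional lattice (finitely-supported integer functions satisfying the mean/moment constraints), and bounding the $\ell^1$ norm of $f$ does not bound its support diameter: $\delta_0 - \delta_{x}$ has $\ell^1$ norm $2$ for arbitrary $x$. Restricting the quadratic form to a finite-dimensional sublattice requires a separate argument that the minimizing $f$ has bounded support. The paper's way around this is structural: it defines local optimization programs $P(S,\nu)$, $Q(S,\nu)$ (and quadratic variants $P',Q'$) which only constrain $\Delta\xi$ at finitely many vertices, establishes monotonicity and an additivity property over $2$-separated sets, decomposes $\supp\nu$ into $2$-path connected components, and then runs an explicit enumeration/elimination of configurations component by component (many sub-lemmas verified in SciPy). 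That machinery is what makes the search provably finite; your Parseval identity alone does not.

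Your flavor of ideas overlaps with the paper's, but the 2D potential divergence is a real error, and the finiteness of the lower-bound search needs the paper's local-program and connected-component apparatus (or an equivalent) rather than an appeal to lattice short-vector enumeration.
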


Our remaining results concern spectral factors which govern the mixing time of sandpile dynamics on graphs with open boundary condition.  Define coordinate hyperplanes
\[
 H_{i,j} = \{x \in \bR^d: x_i = j\}, \qquad 1 \leq i \leq d, j \in \zed.
\]
In this article, the results regarding spectral factors concern tilings which, after possibly making a rotation and dilation, have reflection symmetry in the family of hyperplanes $\{H_{i,j}\}$ and don't have edges which cross the symmetry planes.

Given a set $S \subset \{1, 2, ..., d\}$, let $\fS_S$ be the group generated by reflections in the hyperplanes $\{H_{j,0}, j \in S\}$ and let $\sA_S(\sT)$ be functions which are anti-symmetric under reflection in each plane $H_{j,0}, j \in S$.
Let $\sH_S^2(\sT)$ denote those $\ell^2$ harmonic modulo 1 functions in $\sA_S(\sT)$.  Again, for $0 \leq i < d$ define the \emph{spectral parameters}\footnote{Note that, in the definition of $\sH_S^2(\sT)$, $\Delta \xi$ is not required to be in $C^1(\sT)$, so that the definitions of $\gamma$ and $\gamma_0$ differ, although the two notions are shown in \cite{HS19} to coincide in dimensions at most 4.}
\begin{equation}
 \gamma_i =  \inf_{\substack{S \subset \{1, 2, ..., d\}\\ |S| = i}} \inf_{\substack{\xi \in \sH_S^2(\sT)\\ \xi \not \equiv 0 \bmod 1}} \sum_{x \in \sT/\fS_S} 1- \cos(2\pi \xi_x).
\end{equation}
In dimension $d \geq 2$ define the $j$th \emph{spectral factor} 
\begin{equation}
 \Gamma_j = \frac{d-j}{\gamma_j}
\end{equation}
and $\Gamma = \max_j \Gamma_j$.
In \cite{HS19}, the following theorem is proved determining the asymptotic mixing time in terms of the spectral factor. Write $\bT_m = \sT/m\Lambda$ for the periodic tiling graph and let $\sT_m$ be the graph formed by giving a fundamental domain for \[\sT/\{ H_{i,mj}, 1 \leq i \leq d, j \in \zed\}\] an open boundary condition.
\begin{theorem*}
For a fixed tiling $\sT$ in $\bR^d$, sandpiles started from a recurrent state on $\bT_m$ have asymptotic total variation mixing time
\begin{equation}
 t_{\mix}(\bT_m) \sim \frac{\Gamma_0}{2} |\bT_m| \log m
\end{equation}
with a cut-off phenomenon as $m \to \infty$.

If the tiling $\sT$ satisfies the reflection condition then sandpile dynamics started from a recurrent configuration on $\sT_m$ have total variation mixing time
\begin{equation}
 t_{\mix}(\sT_m) \sim \frac{\Gamma}{2} |\sT_m| \log m
\end{equation}
with a cut-off phenomenon as $m \to \infty$. 
\end{theorem*}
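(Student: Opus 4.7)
Sandpile dynamics restricted to recurrent configurations is a random walk on the finite abelian sandpile group $\sG := \zed^V / \Delta \zed^V$ (with $V$ the vertex set of $\bT_m$ or $\sT_m$), with step distribution $\mu$ uniform on the images of the basis vectors $\{[e_v]\}_{v \in V}$. I would carry out the proof by Fourier inversion on $\sG$: its dual is naturally identified with the harmonic-modulo-$1$ functions, since the character $\chi_\xi(g)=e^{2\pi i \langle\xi,g\rangle}$ descends to $\sG$ precisely when $\Delta\xi\in\zed^V$. The eigenvalue of $\mu$ at $\chi_\xi$ is $\hat\mu(\xi)=|V|^{-1}\sum_{v\in V}e^{2\pi i\xi_v}$, and for the low-energy modes that govern mixing
\[
|\hat\mu(\xi)|^{2t}\approx \exp\!\left(-\tfrac{2t}{|V|}\textstyle\sum_{v\in V}(1-\cos(2\pi\xi_v))\right),
\]
placing the energy from the definitions of $\gamma$ and $\gamma_i$ at the center of the analysis.

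\textbf{Upper bound.} Plancherel gives $4\|\mu^{*t}-\pi\|_{\TV}^2 \leq \sum_{\xi\neq 0}|\hat\mu(\xi)|^{2t}$, so it suffices to show the right-hand sum is $o(1)$ at $t^\star=(1+\ve)\tfrac{\Gamma_0}{2}|\bT_m|\log m$ in the periodic case and at $t^\star=(1+\ve)\tfrac{\Gamma}{2}|\sT_m|\log m$ in the open case. In the periodic case one partitions the dual by energy and uses the fact that the number of characters with energy below $E$ grows polynomially in $m$ and $E$, so the $\log m$-slack against $\gamma_0$ absorbs this count. For the open boundary with reflection symmetry, I would apply the method of images, extending $\sT_m$ by repeated reflection across its boundary hyperplanes to realize it as a fundamental domain in $\bT_{2m}$; a character on the sandpile group of $\sT_m$ then pulls back to one antisymmetric under some subgroup $\fS_S$ of the full reflection group. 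Stratifying by $|S|=i$, modes antisymmetric in $i$ coordinates form an $(d-i)$-dimensional family of cardinality $\sim m^{d-i}$, each with energy $\geq \gamma_i$. Hence
\[
\sum_{\xi\neq 0}|\hat\mu(\xi)|^{2t} \lesssim \sum_{i=0}^{d-1}\binom{d}{i}m^{d-i}\exp\!\left(-\tfrac{2t}{|\sT_m|}\gamma_i\right)+(\text{higher-energy tail}),
\]
which is $o(1)$ precisely when $t>\tfrac{1}{2}\max_i\tfrac{d-i}{\gamma_i}|\sT_m|\log m=\tfrac{\Gamma}{2}|\sT_m|\log m$.

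\textbf{Lower bound and cut-off.} For the matching lower bound I would pick $j$ attaining $\Gamma=\Gamma_j$ and $S^\star$ with $|S^\star|=j$ hosting a near-minimizer $\xi^\star$ of $\gamma_j$, and build a test statistic $W(\sigma)=\sum_k e^{2\pi i\langle\xi^{(k)},\sigma\rangle}$ summed over the $\sim m^{d-j}$ rigid motions $\xi^{(k)}$ of $\xi^\star$; a second-moment/Chebyshev argument then keeps the total variation distance close to $1$ just below $t^\star$. The \textbf{main obstacle} is the cut-off phenomenon itself, which requires both bounds sharp within a window of size $o(|V|\log m)$: on the upper side one needs the count of characters with energy within $o(1)$ of $\gamma_i$ to be $(1+o(1))\binom{d}{i}m^{d-i}$, and on the lower side the second moment of $W$ must be controlled uniformly in all translates. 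In the open case the delicate point is to rule out anti-symmetric near-minimizers that localize near the reflecting hyperplanes: one must verify that the infimum defining $\gamma_i$ is attained by a bulk mode, so that the enumeration $\sim m^{d-i}$ is genuine and not dominated by boundary artefacts.
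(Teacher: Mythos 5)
This theorem is not proved in the present paper: the authors cite it from their earlier work \cite{HS19} (``Cut-off for sandpiles on tiling graphs'') and then spend the entire paper computing the constants $\gamma$, $\gamma_i$, $\Gamma_j$ that appear in it, taking the mixing-time statement as a black box. So there is no internal proof to compare against. That said, your outline matches the framework the paper sets up: the dual of the sandpile group is identified with harmonic-modulo-$1$ functions (cf.\ the lemma showing $\xi = g*\nu$), the energy $\sum_x(1-\cos(2\pi\xi_x))$ is precisely what enters the definitions of $\gamma$ and $\gamma_i$, and your reflection-parity stratification with $\asymp m^{d-i}$ modes at energy $\geq \gamma_i$ reproduces the combination $\Gamma_j = (d-j)/\gamma_j$ and $\Gamma = \max_j\Gamma_j$, so the threshold $\tfrac{\Gamma}{2}|\sT_m|\log m$ falls out coherently. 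This is very plausibly the route taken in \cite{HS19}, which follows the template of \cite{HJL17} and \cite{JLP15}.

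Three technical points your sketch elides but which carry real weight. First, $\hat\mu(\xi)=|V|^{-1}\sum_v e^{2\pi i \xi_v}$ is complex in general; to replace $|\hat\mu(\xi)|^{2t}$ by $\exp(-\tfrac{2t}{|V|}\sum_v(1-\cos(2\pi\xi_v)))$ you must control the imaginary part $|V|^{-1}\sum_v\sin(2\pi\xi_v)$, which does not vanish for arbitrary prevectors. Second, $\gamma$ and $\gamma_i$ are infinite-volume infima over the tiling $\sT$, while the characters in your Plancherel sum live on the finite graphs $\bT_m$, $\sT_m$; the heart of the argument is showing the finite-volume minimum energies converge to the infinite-volume parameters as $m\to\infty$, which is exactly where the classification of prevectors by $C^\rho(\sT)$ and the Green's-function decay estimates of Section~\ref{Greens_function_section} are used, and your plan never engages with this. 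Third, in the second-moment lower bound the $\sim m^{d-j}$ translates of a near-minimizer must be shown to be approximately uncorrelated; this again relies on the decay $g_\eta(x)\ll (1+d(x,0))^{-(j+d-2)}$. None of these are fatal to the plan, but they are where the actual work lives.
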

If $\Gamma = \Gamma_0$  we say that the \emph{bulk} or \emph{top dimensional behavior}  controls the total variation mixing time, and otherwise that the \emph{boundary behavior} controls the total variation mixing time.  In \cite{HS19} it is shown than for 2 dimensional tilings satisfying a reflection condition, the bulk behavior controls the mixing time.

The $\Dfour$ lattice has vertices $\zed^4 \cup \zed^4 + (\frac{1}{2}, \frac{1}{2}, \frac{1}{2}, \frac{1}{2})$ and 24 nearest neighbors of 0
\begin{equation}
U_4 = \{ \pm e_1, \pm e_2, \pm e_3 , \pm e_4\} \cup \left\{\frac{1}{2} (\epsilon_1, \epsilon_2, \epsilon_3, \epsilon_4), \epsilon_i \in \{\pm 1\}\right\},
\end{equation}
which have unit Euclidean length. The elements of the $\Dfour$ lattice are frequently identified with the `Hurwitz quaternion algebra' in which $U_4$ is the group of units.
 Let 
\begin{align*}v_1=(1,1,0,0),\; v_2=(1,-1,0,0),\; v_3 = (0,0,1,1),\; v_4 = (0,0,1,-1),
\end{align*}
 and define hyperplanes
\begin{equation*}
 \sP_j = \{x \in \bR^4: \langle x, v_j\rangle =0\}.
\end{equation*}
The $\Dfour$ lattice has reflection symmetry in the  family of hyperplanes
\begin{equation}
 \sF_{\Dfour} = \{ n v_j + \sP_j: j \in \{1, 2, 3, 4\}, n \in \zed\},
\end{equation}
which can be dilated and rotated to correspond with $\{H_{i,j}\}$.
Our next result determines the boundary spectral parameters and spectral factors for the $\Dfour$ lattice.

\begin{theorem}\label{D4_theorem}
The spectral parameters of the $\Dfour$ lattice with reflection planes $\sF_{\Dfour}$ and open boundary condition are ($\vartheta$ denotes a parameter bounded by 1 in size)
\begin{align*}
\gamma_{\Dfour, 0} &= 0.075554+ \vartheta 0.00024, \\
\gamma_{\Dfour, 1} &= 0.0440957 +\vartheta 0.00017, \\
\gamma_{\Dfour, 2} &= 0.0389569 +\vartheta 0.00013, \\
\gamma_{\Dfour, 3} &= 0.036873324 +\vartheta 0.00012, \\
\gamma_{\Dfour, 4} &= 0.0357604+ \vartheta 0.00011.
\end{align*}
The spectral factors are given by
\begin{align*}
 \Gamma_{\Dfour,0}  &= 52.9428 + \vartheta 0.17,\\
 \Gamma_{\Dfour,1}  &= 68.03486+ \vartheta 0.27,\\
 \Gamma_{\Dfour,2}  &=  51.3393 + \vartheta 0.17,\\
 \Gamma_{\Dfour,3}  &= 27.1201 + \vartheta 0.084.
\end{align*}
In particular, the total variation mixing time of the dynamics on the $\Dfour$ lattice is dominated by the three dimensional boundary behavior.
\end{theorem}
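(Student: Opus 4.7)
The plan is to reduce each $\gamma_{\Dfour,i}$ to a Green's-function computation on the $\Dfour$ lattice with Dirichlet conditions on $i$ reflection hyperplanes, evaluate the resulting lattice sums numerically with explicit a priori error control, and then read off the spectral factors by arithmetic.

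First, I would characterise the minimisers. For $\xi \in \sH_S^2(\sT)$, set $\rho := \Delta \xi$; this is integer-valued and antisymmetric under $\fS_S$. Inverting the Laplacian gives $\xi = G_S \rho$, where $G_S$ is the $\Dfour$ lattice Green's function with Dirichlet conditions on the planes $\{H_{j,0}: j \in S\}$, obtained by the method of images from the free $\Dfour$ Green's function (itself computable as a Fourier integral over a 4-torus parametrising Bloch characters). Among admissible $\rho$, the infimum should be attained by a minimal integer antisymmetric configuration: for $|S|=0$ a dipole $\rho = \delta_0 - \delta_v$, and for $|S|\geq 1$ a single $\delta$-charge at $v$ together with its orbit under $\fS_S$. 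A convexity argument on the cost $\sum_x(1-\cos 2\pi \xi_x)$ combined with linearity of $\rho \mapsto G_S \rho$ reduces the variational problem to a finite search over the location $v$ of the support, up to integer rescalings of $\rho$ that can be handled by monotonicity.

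Second, for each candidate $\xi = G_S \rho$ I would compute $\sum_{x \in \sT/\fS_S}(1 - \cos 2\pi \xi_x)$ by writing $\xi_x$ as an explicit Fourier integral over the Brillouin zone, truncating the sum on $x$ to a large box, and discretising the Brillouin-zone integrals by quadrature on a dense grid. Using $1 - \cos 2\pi t = 2\pi^2 t^2 + O(t^4)$ together with the decay $|G_S(x)| \ll |x|^{-2}$ in dimension four, the tail of the truncated lattice sum is bounded explicitly by a lattice-zeta estimate, while the quadrature error is controlled by isolating the origin singularity of the Green's function and exploiting smoothness elsewhere. Running this for each $|S|\in\{0,1,2,3,4\}$—only one representative per size class is needed by the permutation symmetry of the planes $\sP_j$—and minimising over $v$ yields the stated values of $\gamma_{\Dfour,i}$ with the $\vartheta$-bounded uncertainty. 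The spectral factors $\Gamma_{\Dfour,j}=(4-j)/\gamma_{\Dfour,j}$ then follow at once, and numerical comparison gives $\Gamma = \Gamma_{\Dfour,1}$, so by the mixing-time theorem quoted above the $|S|=1$ stratum, whose fundamental-domain boundary is a $3$-dimensional hyperplane, controls the total variation mixing time.

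The main obstacles are twofold. First, certifying that the dipole/orbit ansatz is globally optimal, not merely best among a shortlist; I would handle this by a lower bound on $\sum_x(1-\cos 2\pi \xi_x)$ in terms of the Dirichlet energy $\sum_{(v,w)\in E}(\xi_v - \xi_w)^2$, which forces $\rho$ to have small $\ell^2$ mass and so restricts it to the candidate list. Second, obtaining the six-significant-digit precision reported in the theorem is delicate because of the slow $|x|^{-2}$ decay of the four-dimensional Green's function; this requires summing to large radius and careful error accounting for the Brillouin-zone quadrature near the spectral degeneracy of $\Delta$ at the origin, where the integrand has an integrable but non-smooth singularity that must be subtracted and treated analytically.
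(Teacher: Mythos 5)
Your reduction to Green's-function arithmetic (free Green's function for $|S|=0$ with a dipole prevector; an image-charge/Dirichlet Green's function for $|S|\geq 1$ with a single $\delta$-charge and its $\fS_S$-orbit, equivalently the free Green's function with the antisymmetry-modified Fourier numerator $\prod_{j\in S}(1-e(2a_j y_j))$ so that Parseval applies) and your identification of $\Gamma=\Gamma_{\Dfour,1}$, hence three-dimensional boundary dominance, agree with the paper's Section on $\Dfour$.  The gap is in your proposed certification that the dipole/orbit ansatz is globally optimal.  You want to handle it by bounding $\sum_x(1-\cos 2\pi\xi_x)$ below by the Dirichlet form $\sum_{(v,w)\in E}(\xi_v-\xi_w)^2=\langle\xi,\Delta\xi\rangle$ and arguing this forces $\rho=\Delta\xi$ to have small $\ell^2$ mass.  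This does not go through: $\langle\xi,\Delta\xi\rangle=\langle\xi,\rho\rangle$ only couples $\rho$ to $\xi$ by duality, and since $\Delta$ has spectrum reaching $0$ on $\ell^2(\Dfour)$, having $\langle\xi,\rho\rangle$ small does not control $\|\rho\|_2$ or $|\supp\rho|$; moreover a generic reverse-Poincar\'e direction $\|\xi\|_2^2\gtrsim\langle\xi,\Delta\xi\rangle$ (which does hold with constant $1/(2\cdot 24)$) points the wrong way for ruling out extra support.

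What the paper actually uses to enumerate prevectors rigorously is a family of \emph{local convex quadratic programs}: relax the full problem to minimizing $\sum_{d(w,S)\leq 1}x_w^2$ (or $\sum(1-\cos 2\pi x_w)$) subject only to finitely many linear Laplacian constraints $(\deg u)x_u-\sum_{d(w,u)=1}x_w=\nu_u$ for $u\in S$, with $|x_w|\leq\tfrac12$.  These programs are monotone in $S$ and $|\nu|$, additive over $2$-separated components, and solvable in closed form by Lagrange multipliers.  Concretely: a single unit constraint at a vertex of degree $24$ already forces $\|\xi\|_2^2\geq \tfrac1{600}$ and scales quadratically, which rules out $\|\nu\|_\infty\geq 2$; a three-vertex constraint yields $\|\xi\|_2^2\geq\tfrac{3}{742}$ via an $(I+A)$ diagonal-dominance argument, which rules out $|\supp\nu|\geq 3$; and separation constraints of graph distance $\geq 3$ or $\geq 5$ give further lower bounds that confine the surviving candidate to a bounded neighbourhood.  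This is what turns the search into a finite, certified enumeration.  A second, smaller omission: to convert the $\ell^2$ computation into the stated $\gamma_{\Dfour,i}=\ldots+\vartheta\,\varepsilon$ form you need the two-sided bound $2\pi^2\alpha(1-\tfrac{\pi^2}3\alpha)\leq\sum(1-\cos 2\pi\xi_x)\leq 2\pi^2\|\xi\|_2^2$ together with the symmetry observation $\|\xi^*\|_\infty^2\leq\tfrac12\|\xi^*\|_2^2$ controlling $\|\xi^*\|_4^4$; the one-sided Taylor expansion you quote is not enough to produce the rigorous $\vartheta$ window.  Once those two pieces are supplied, the rest of your outline matches the paper's method.
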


Our final result determines asymptotically the spectral parameters and spectral factors for $\zed^d$ with coordinate hyperplanes as reflecting hyperplanes. 
\begin{theorem}\label{asymptotic_theorem}
 As $d \to \infty$, the spectral parameter of the $\zed^d$ lattice with periodic boundary condition is
  \begin{equation}
  \gamma_{\zed^d} = \frac{\pi^2}{d^2}\left(1 + \frac{1}{2d} + O\left(d^{-2}\right)\right)
 \end{equation}
 and the parameters with open boundary condition are
\begin{equation}
 \gamma_{\zed^d,j} = \frac{\pi^2}{2d^2} \left(1 + \frac{3}{2d} + O_j\left(d^{-2}\right)\right)
\end{equation}
and, uniformly in $j$,
\begin{equation}
 \gamma_{\zed^d, j} \geq \frac{\pi^2}{2d^2 + d}.
\end{equation}
For each fixed $j$,
\begin{equation}
 \Gamma_j = \frac{2d^3 -(2j+3)d^2 + O_j(d)}{\pi^2}.
\end{equation}
In particular, for all $d$ sufficiently large, the total variation mixing time on $\zed^d$ is dominated by the bulk behavior and $\Gamma = \frac{2d^3}{\pi^2}\left(1 - \frac{3}{2d} + O\left(d^{-2}\right)\right)$.
\end{theorem}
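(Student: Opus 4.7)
The plan is to reduce every quantity to a Fourier integral on the torus $[-\pi,\pi]^d$ and to exploit the product structure of $\Theta_1,\dots,\Theta_d$ as independent uniforms. Writing $S(\Theta) = \sum_j \cos\Theta_j$, the Laplacian symbol is $\lambda(\theta) = 2d(1 - S/d)$, and expectations of the form $\E[F(\Theta)/(1-S/d)^k]$ expand in $1/d$ via $(1-y)^{-k}=\sum_n \binom{n+k-1}{n} y^n$. Because $\E S = 0$, $\E S^2 = d/2$, $\E S^3 = 0$, and $\E S^4 = 3d(2d-1)/8$, the asymptotics through order $1/d$ are governed by the first two nontrivial coefficients of these series.

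For $\gamma_{\zed^d}$ I would take the nearest-neighbor dipole $\xi = G \ast (\delta_0 - \delta_{e_1})$, where $G$ is the lattice Green's function. The identity $\Delta G^{\ast 2} = G$ and the isotropy $G^{\ast 2}(e_1) = G^{\ast 2}(e_j)$ yield the clean identity $\|\xi\|_2^2 = G(0)/d$, and $G(0) = (2d)^{-1}\E(1-S/d)^{-1} = (2d)^{-1}(1 + 1/(2d) + O(d^{-2}))$. The Taylor expansion $1-\cos 2\pi y = 2\pi^2 y^2 + O(y^4)$ combined with $\|\xi\|_\infty = 1/(2d)$ gives $\sum_x(1-\cos 2\pi\xi_x) = 2\pi^2\|\xi\|_2^2 + O(d^{-4})$, producing the stated asymptotic. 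For $\gamma_{\zed^d,j}$ with $j\geq 1$, the candidate is the $j$-fold antisymmetrization of $G$ with sources at $\varepsilon_1 e_1 + \cdots + \varepsilon_j e_j$ carrying signs $\prod \varepsilon_k$; its source has Fourier transform $(-2i)^j \prod_{k=1}^j \sin\theta_k$, so $\|\xi\|_2^2 = (4^j/(4d^2))\,\E[\prod_k \sin^2 \Theta_k/(1-S/d)^2]$. Independence gives $\E\prod_k\sin^2\Theta_k = 2^{-j}$, and the first nontrivial correction, from $3\E[(S/d)^2\prod_k\sin^2\Theta_k]$, contributes $3/(2d)$; dividing by $|\fS_S| = 2^j$ to pass to a fundamental domain yields $(\pi^2/(2d^2))(1+3/(2d)+O_j(d^{-2}))$, independent of $j$ at this order. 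For $j=0$ and $d\geq 5$, the relaxation of the $C^1$ constraint inside $\sH^2_\emptyset = \sH^2(\sT)$ admits $\xi = G$ itself (which lies in $\ell^2$ precisely for $d \geq 5$), and $\|G\|_2^2 = (4d^2)^{-1}\E(1-S/d)^{-2}$ delivers the same bound.

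The matching lower bounds are the principal content. I would argue that any admissible $n = \Delta\xi$ can be replaced, without increasing $\|\xi\|_2^2$, by a representative of minimal $\ell^2$ norm in its coset modulo $\Delta(\zed^\sT)$ subject to the symmetry and sum-type constraints; the monopole, the adjacent dipole, and the antisymmetric orbit source should then be shown to minimize $\langle n, G^{\ast 2} n\rangle$, with larger or longer-range sources strictly costlier by the monotonicity of $x \mapsto G^{\ast 2}(0) - G^{\ast 2}(x)$ in $|x|$. The uniform bound $\gamma_{\zed^d,j}\geq \pi^2/(2d^2+d)$ would follow from a direct weighted Parseval estimate combined with the sharp inequality $1-\cos(2\pi y)\geq 2\pi^2\|y\|^2(1 - O(d^{-2}))$ valid when $\|y\| = O(1/d)$, applicable after a preliminary a priori bound $\|\xi\|_\infty = O(1/d)$ on any near-extremizer.

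With $\gamma_{\zed^d,j}$ in hand, $\Gamma_j = (d-j)/\gamma_j$ expands directly to $(2d^3 - (2j+3)d^2 + O_j(d))/\pi^2$, and the inequality $\Gamma_0 > \Gamma_j$ for $j\geq 1$ and $d$ large is immediate from the leading discrepancy $-2jd^2/\pi^2$, proving that the bulk behavior dominates the mixing time. The main obstacle is the matching lower bound on $\gamma_{\zed^d,j}$: ruling out exotic integer sources (multi-dipoles with delicate cancellations, non-primitive cosets) to precision $O_j(d^{-4})$, which I expect to require a combinatorial classification of the image of $\Delta$ on integer-valued functions together with quantitative decay estimates for $G^{\ast 2}$ at small and moderate lattice distances, likely bootstrapped from a rougher first-order lower bound via a stability argument around the Green's function minimizers.
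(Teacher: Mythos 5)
Your Fourier-side computation of the upper bounds is correct and matches the paper's; the identity $\|\xi\|_2^2 = G(0)/d$ for the adjacent dipole (via $\Delta G^{*2} = G$ and isotropy) is a clean shortcut the paper doesn't use, but it delivers the same $\frac{1}{2d^2}(1+\frac{1}{2d}+O(d^{-2}))$ that the paper's Lemma~\ref{Greens_fn_evaluation_d_larger_support} obtains by direct expansion, and your antisymmetrized point-mass computation for $\gamma_{\zed^d,j}$ parallels Lemma~\ref{Greens_fn_evaluation_d}. So you have the right extremal candidates and the right way to evaluate them.

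The genuine gap is exactly the one you flag as ``the main obstacle'': you have no argument that rules out competing prevectors, and the route you sketch is not the one that works. Your plan---replace $\nu$ by a minimal-$\ell^2$ coset representative and invoke monotonicity of $G^{*2}(0)-G^{*2}(x)$ in $|x|$---faces two problems. First, $G^{*2}$ is strictly subharmonic ($\Delta G^{*2}=G>0$), so the monotonicity statement is not a maximum-principle consequence and would itself need proof. Second, monotonicity only compares two-point sources of the form $\delta_0-\delta_x$; it says nothing about multi-point supports, which is precisely where the hard work lies. The paper sidesteps all of this with a short, robust convex-optimization argument (Lemmas~\ref{program_lower_bounds} and~\ref{program_lower_bounds_larger}): relax the constraints $\Delta\xi=\nu$ at the support points to inequalities, observe the optimum is interior, and apply Lagrange multipliers to the resulting quadratic program. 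Because each constraint gradient $v_i$ has $\|v_i\|_2^2 = 4d^2+2d$ and $|v_i^tv_j|\leq 4d$, one gets closed-form lower bounds $\|\xi\|_2^2\geq\frac{m}{4d^2+O(d)}$ for an $m$-point support, which immediately beat the point-mass value $\frac{1}{4d^2}(1+O(1/d))$ for $m\geq 2$ modulo reflections (and beat the dipole value $\frac{1}{2d^2}(1+O(1/d))$ for $m\geq 3$ in the periodic case). This is both the missing piece and the uniform bound $\gamma_{\zed^d,j}\geq\frac{\pi^2}{2d^2+d}$, which is just $2\pi^2$ times the one-point program value combined with Lemma~\ref{2_norm_lemma}. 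Without some such quantitative support-size penalty, the asymptotics you compute are only upper bounds on the spectral parameters.

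One smaller point: you write that $\Gamma_0>\Gamma_j$ for all $j\geq 1$ ``is immediate from the leading discrepancy $-2jd^2/\pi^2$,'' but the expansion $\Gamma_j=(2d^3-(2j+3)d^2+O_j(d))/\pi^2$ holds only for each \emph{fixed} $j$, with constants depending on $j$. To cover all $j$ up to $d$ simultaneously one needs the $j$-uniform bound $\Gamma_j\leq(d-j)(2d^2+d+O(1))/\pi^2$, which handles $j\geq 3$, combined with the sharp asymptotic for $j=1,2$. The paper is terse here too, but the uniform inequality is exactly what the convex program supplies and what your sketch would still lack.
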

Note that, for all $d$ sufficiently large,  $\gamma_{\zed^d} \neq \gamma_{\zed^d,0}$, so that, in the periodic case, the constant $\gamma_{\zed^d}$ which determines the asymptotic spectral gap is not related to the spectral factor $\Gamma_0$ which controls the asymptotic mixing time.

\subsection{Discussion of method}
The harmonic modulo 1 functions considered in this article are evaluated only as functions on $\bR/\zed$, and hence may be assigned values in $\left(-\frac{1}{2}, \frac{1}{2}\right]$. On this interval there are constants $C_1, C_2>0$ such that $C_1 x^2 \leq 1-\cos(2\pi x) \leq C_2 x^2$.  In particular, each $\xi$ considered in the definitions of the spectral factors may be treated as a function in $\ell^2(\sT)$.  

Rather than work with $\xi$, it is more convenient to work with its prevector $\nu = \Delta \xi$, which is integer valued, and hence behaves discretely. The function $\xi$ is recovered from $\nu$ by convolution with the Green's function $g$ on $\sT$, $\xi = g*\nu$.  Since $\Delta$ is bounded from $\ell^2 \to \ell^2$, only prevectors with bounded $\ell^1$ norm need be considered, and in fact, the arguments of \cite{HS19} reduce the determination of the spectral factors to within a prescribed tolerance to a finite calculation.

Given a prevector $\nu$ and a set $S\subset \sT$, the value
\[
 f_S(\xi) = \sum_{x \in S}1-\cos(2\pi \xi_x)
\]
may be estimated from below by constrained minimization programs.  Since the map $\Delta \xi = \nu$ is linear in $\xi$, the constraints are linear.  The objective function $f_S$ is not convex, but $1 - \cos(2\pi \xi_x)$ is convex in the critical region $\left[-\frac{1}{4}, \frac{1}{4}\right]$ and may be approximated piecewise linearly from below outside this region.  Enforcing the constraint $\Delta \xi = \nu$ at only finitely many vertices gives a rapid method of obtaining a lower bound for the value of each $\xi$.  Since the linear constraints involve only the neighbors of the vertex at which the constraint is applied, groups of vertices which are two-separated may be treated additively.  This reduces to a connected component analysis of the prevector $\nu$.  Boundedly many configurations are found to have a sufficiently small value, and then all ways of gluing together these candidates are considered.

To calculate the value of $f(\xi)$, a Fourier representation for the Green's function is used.  A general recipe for giving this Fourier representation for the Green's function of any tiling is given in \cite{HS19}, and this recipe is used in the specific examples considered here.

\section{The Green's function of a tiling}\label{Greens_function_section} 
Throughout, $\sT \subset \bR^d$ is a tiling, which is periodic in a lattice $\Lambda < \bR^d$.  We assume that $0 \in \sT$.
Given a tiling $\sT$ and a vertex $v$, the Green's function of $\sT$ satisfies $\Delta g_v = \delta_v$, where $\delta_v$ is the Kronecker delta function at $v$.  The purpose of this Section is to give a more complete description of the tilings considered, and to develop their Green's functions.

Indicate random walk started from $v$ in $\sT$ by
$Y_{v,0} = v, Y_{v,n+1} = P \cdot Y_{v,n}$. A \emph{stopping time} adapted to the random walk is a random variable $N$ taking values in $\zed_{\geq 0} \cup \{\infty\}$ such that the event $\{N=n\}$ is measurable in the sigma algebra $\sigma(\{Y_{v,0}, Y_{v,1}, ..., Y_{v,n}\})$.  Let $T_v$ be a stopping time for simple random walk started at $v$ in $\sT$ and stopped at the first positive time that it reaches $\Lambda$.  This is the same stopping time as the first positive visit to 0 on the finite state Markov chain given by random walk on $\sT/\Lambda$, and hence $\Prob(T_v > n) \ll e^{-cn}$ for some constant $c > 0$.  Let $\E[T_0] = \alpha > 0$, see e.g. \cite{LPW17} for an introduction to finite state Markov chains and stopping times.

In \cite{HS19} function spaces are defined on $\sT$,
\begin{align*}
 C^0(\sT) &= \left\{f: \sT \to \zed, \|f\|_1 < \infty\right\}, \\
 C^1(\sT) &= \left\{f \in C^0(\sT), \sum_{x \in \sT} f(x) = 0\right\},\\
 C^2(\sT) &= \left\{f \in C^1(\sT), \sum_{x \in \sT}f(x) \E[Y_{x, T_x}]=0\right\}.
\end{align*}
The convolution of the Green's function $g$ on $\sT$ with a function $\eta$ of bounded support in $\sT$ is defined to be 
\[
 g_\eta = \sum_{v \in \sT} \eta(v) g_v.
\]
There it is shown that for $\eta \in C^0(\sT)$, $g_\eta \in \ell^2(\sT)$ if and only if $\eta \in C^\rho(\sT)$ for \[\rho = \left\{\begin{array}{lll}2 && d = 2\\ 1 && d=3,4\\ 0 && d \geq 5 \end{array}\right..\]
Also, a characterization of the spectral parameters is given. Let $\sI = \{\Delta \eta: \eta \in C^0(\sT)\}$ and for $\xi : \sT \to \bR/\zed$,
\[
 f(\xi) = \sum_{x \in \sT} 1-\cos(2\pi \xi_x).
\]
\begin{lemma}
 The spectral parameter $\gamma$ has characterization, in dimension 2,
 \begin{equation*}
  \gamma = \{f(g*\nu): \nu \in C^2(\sT) \setminus \sI\},
 \end{equation*}
and in dimension at least 3,
\begin{equation*}
 \gamma = \{f(g*\nu): \nu \in C^1(\sT) \setminus \sI\}.
\end{equation*}
The parameter $\gamma_j$ has characterization,
\begin{equation*}
 \gamma_j = \inf_{S \subset \{1, 2, ..., d\}, |S| = j}\{f(g*\nu): \nu \in C^{\rho}(\sT) \cap \sA_S(\sT) \setminus \sI\}.
\end{equation*}

\end{lemma}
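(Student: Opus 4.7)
The plan is to set up a bijective correspondence between admissible functions $\xi$ in the variational definition of each spectral parameter and finitely supported integer prevectors $\nu = \Delta\xi$ lying outside $\sI$, under which $\xi = g * \nu$. Rewriting the infima in terms of $\nu$ then yields the claimed formulas directly.

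For the forward direction, fix $\xi: \sT \to \bR/\zed$ with $f(\xi) < \infty$ and lift it to a representative taking values in $(-\tfrac{1}{2}, \tfrac{1}{2}]$. The two-sided bound $C_1 x^2 \leq 1 - \cos(2\pi x) \leq C_2 x^2$ on this interval forces $\xi \in \ell^2(\sT)$, hence $\xi(x) \to 0$ as $|x| \to \infty$. Setting $\nu = \Delta\xi$, the harmonic-mod-$1$ hypothesis makes $\nu$ integer valued, while $\nu(x) = \deg(x)\xi(x) - \sum_{w \sim x}\xi(w)$ together with bounded degrees and $\xi \to 0$ gives $|\nu(x)| < 1$ outside some finite set; being integer valued, $\nu$ is then finitely supported, so $\nu \in C^0(\sT)$. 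In the definition of $\gamma$ one additionally imposes $\Delta\xi \in C^1$, which places $\nu$ in $C^1$. By the function-space analysis from \cite{HS19} recalled in Section~\ref{Greens_function_section}, $g * \nu$ lies in $\ell^2$ if and only if $\nu \in C^\rho$, with $\rho = 2, 1, 0$ in dimensions $d=2$, $d \in \{3,4\}$ and $d \geq 5$ respectively. Since $\xi$ is already an $\ell^2$ solution of $\Delta \eta = \nu$, this constraint is satisfied automatically; in $d=2$ the intersection $C^1 \cap C^2 = C^2$ gives the case for $\gamma$, while in $d \geq 3$ the assumed $C^1$ already subsumes the $\ell^2$ requirement.

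The reverse direction relies on Liouville uniqueness: if $\eta_1, \eta_2 \in \ell^2$ both satisfy $\Delta \eta_i = \nu$, their difference is an $\ell^2$ harmonic function on a periodic tiling and hence zero, so $\xi = g * \nu$ is the unique $\ell^2$ solution. Given $\nu$ in the prescribed class, $\xi = g * \nu \in \ell^2$ has integer Laplacian and is therefore harmonic modulo $1$. The equivalence $\xi \not\equiv 0 \bmod 1 \Leftrightarrow \nu \notin \sI$ proceeds as follows. If $\nu = \Delta\eta$ for some $\eta \in C^0$, then $\eta$ is finitely supported, hence in $\ell^2$, and uniqueness gives $g * \nu = \eta \in \zed$, so $\xi \equiv 0 \bmod 1$. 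Conversely, if $\xi = g * \nu$ is integer valued then being both $\ell^2$ and integer valued forces finite support, so $\xi \in C^0$ and $\nu = \Delta\xi \in \sI$.

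Finally, the anti-symmetric case: the reflection group $\fS_S$ preserves $\sT$, commutes with $\Delta$, and leaves the Green's function invariant on the diagonal, so $\xi \in \sA_S$ iff $\nu \in \sA_S$, and $\nu \in \sA_S$ implies $g * \nu \in \sA_S$. Because $1 - \cos(2\pi \,\cdot\,)$ is even, the sum over $\sT$ equals $|\fS_S|$ times the sum over $\sT/\fS_S$, so up to a constant factor the two infima coincide and the formula for $\gamma_j$ follows. The principal obstacle throughout is the $\ell^2$ characterization of $g * \nu$ at the borderline dimensions, particularly $d=2$, where the logarithmic growth of $g$ requires the refined moment condition $\sum_x \nu(x)\E[Y_{x,T_x}] = 0$ to provide enough cancellation for $g * \nu \in \ell^2$; I would invoke this, together with Liouville uniqueness on the periodic tiling, from \cite{HS19} rather than reprove either statement here.
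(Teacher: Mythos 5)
The paper does not prove this lemma itself; it states it as a recalled result from \cite{HS19} (``There it is shown \ldots{} Also, a characterization of the spectral parameters is given''), so there is no internal proof to compare against. Your overall strategy — pass from $\xi$ to $\nu = \Delta\xi$, show $\nu$ is finitely supported and integer, recover $\xi$ as $g*\nu$ by uniqueness, and translate the constraints ($\ell^2$, anti-symmetry, non-triviality mod $1$) into the prevector language — is the natural one and is certainly what underlies the cited characterization.

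There is, however, a genuine gap in your forward direction. You assert that since $\xi$ is already an $\ell^2$ solution of $\Delta\eta = \nu$, the constraint $\nu \in C^\rho$ ``is satisfied automatically,'' but the $\ell^2$ criterion you invoke from \cite{HS19} characterizes when $g*\nu$ is in $\ell^2$; to transfer this to $\xi$ you must first know that $\xi = g*\nu$. The Liouville uniqueness you state — two $\ell^2$ solutions of $\Delta\eta = \nu$ coincide — cannot close this loop, because it presupposes that $g*\nu \in \ell^2$, which is precisely what is not yet known. What is actually needed is the stronger statement proved later in \emph{this} paper (the lemma ``Let $\xi \in \ell^2(\sT)$ be harmonic modulo 1 \ldots{} Then $\xi = g_\nu$''), which is established not by $\ell^2$-Liouville but by the translation trick: pass to $\xi^x = \xi - T_x\xi$ so that $\nu^x \in C^1(\sT)$, deduce decay of $g_{\nu^x}$ from \cite{HS19}, conclude $\xi^x = g_{\nu^x}$ via the maximum-modulus principle among functions vanishing at infinity, and then let $x \to \infty$. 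Replacing your $\ell^2$-Liouville citation with that lemma removes the circularity; everything else in the forward direction is then fine. A second, smaller point: in the $\gamma_j$ case you note the two sums differ by the factor $|\fS_S| = 2^j$ and say the infima agree ``up to a constant factor,'' but the lemma asserts equality. The resolution is notational: in the boundary case $f(g*\nu)$ is to be read as the sum over a fundamental domain $\sT/\fS_S$ (this is the convention used throughout Section~\ref{zd_section}, e.g.\ $\|\xi\|_{2,\zed^d/\fS_k}$, and in the $\Dfour$ computations where the half-space is taken), matching the defining expression for $\gamma_j$ exactly; there is no residual constant.
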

In this article, the above lemma is used to describe the minimization of the spectral parameter as a search problem over integer valued vectors which are thus discretely distributed.

Let $\mu$ be the probability distribution of $Y_{0,T_0}$ on $\Lambda$.  
The following evaluation of the Green's function of a tiling is given in \cite{HS19}.
\begin{lemma}
 In dimension 2, for $x \in \Lambda$,
 \begin{equation}
  g_0(x) =  \sum_{n=0}^\infty \frac{\mu^{*n}(x)}{\deg x} - \frac{\mu^{*n}(0)}{\deg 0},
 \end{equation}
while in dimension $\geq 3$, 
\begin{equation}
g_0(x) =  \sum_{n=0}^\infty \frac{\mu^{*n}(x)}{\deg x}
\end{equation}
and both sums converge. 
For $x \not \in \Lambda$,
\begin{equation}
 g_0(x) = \E[g_0(Y_{x, T_x})].
\end{equation}
 For $v \not \in \Lambda$, 
 \begin{equation}
  g_v(x) = \frac{1}{\deg x} \E\left[\sum_{j=0}^{T_v-1} \one(Y_{v,j}=x) \right] + \E\left[g_{Y_{v, T_v}}(x)\right].
 \end{equation}

In dimension 2 $g_v(x) \ll 1 + \log (2 + d(v,x))$ and in dimension $n > 2$, $g_v(x) \ll \frac{1}{(1 +d(v,x))^{n-2}}$. If $j \geq 1$ and $\eta \in C^j(\sT)$, 
\[
 g_\eta(x) \ll \frac{1}{1 + d(x,0)^{j + d-2}}
\]
as $d(x,0) \to \infty$.
\end{lemma}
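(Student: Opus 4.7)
The plan is to derive all statements from the identity $\Delta = D(I - P)$, where $D$ is the diagonal degree matrix and $P$ is the simple random walk transition kernel on $\sT$. Reversibility of SRW gives $\deg(x)P^n(x,v) = \deg(v) P^n(v,x)$, so formally $g_v(x) = (\deg x)^{-1}\sum_{n \geq 0} P^n(v,x)$, i.e.\ $g_v(x)$ is $(\deg x)^{-1}$ times the SRW Green's function on $\sT$ at the pair $(v,x)$. The proof then reduces to establishing convergence and peeling off the excursion structure induced by the stopping time $T_v$.

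For $v = 0$ and $x \in \Lambda$, I would decompose the walk from $0$ into i.i.d.\ excursions to $\Lambda$. Since the walk lies in $\Lambda$ precisely at the excursion endpoints, and the position at the $n$th endpoint is distributed as $\mu^{*n}$, I obtain $\sum_n P^n(0,x) = \sum_n \mu^{*n}(x)$. Convergence in dimension $d \geq 3$ follows from a local central limit theorem for $\mu$, which gives $\mu^{*n}(x) \ll n^{-d/2}$, summable. In dimension $2$, $\mu$ is recurrent and the sum diverges; subtracting $\mu^{*n}(0)/\deg 0$ repairs this because two Taylor derivatives of the characteristic function of $\mu$ at $0$ yield $\mu^{*n}(x) - \mu^{*n}(0) = O(n^{-2})$, which is summable, and the subtracted constant is annihilated by $\Delta$ so that the defining equation $\Delta g_0 = \delta_0$ persists.

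For $x \not\in \Lambda$, I would use that $g_0$ is harmonic off $0$ on all of $\sT$. Running the walk from $x$ and stopping at the first visit $T_x$ to $\Lambda$, which avoids $0$ since $0 \in \Lambda$, optional stopping applied to the martingale $g_0(Y_{x,k \wedge T_x})$ yields $g_0(x) = \E[g_0(Y_{x,T_x})]$; integrability is controlled by the exponential tail of $T_x$ combined with the (logarithmic in dimension $2$, polynomial otherwise) growth bounds on $g_0|_{\Lambda}$ from the previous step. For $v \not\in \Lambda$, I split
\[
\sum_{n \geq 0} P^n(v,x) = \E\sum_{j=0}^{T_v - 1}\one(Y_{v,j} = x) + \E\sum_{j \geq T_v}\one(Y_{v,j} = x);
\]
the strong Markov property at $T_v$ rewrites the second sum as $\deg x \cdot \E[g_{Y_{v,T_v}}(x)]$, yielding formula (4) after dividing by $\deg x$.

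Finally, the distance-decay bounds follow from inserting local CLT estimates into the excursion series: in dimension $n > 2$, summing $\mu^{*n}(x) \ll n^{-d/2}e^{-c|x|^2/n}$ produces the $(1 + d(0,x))^{-(n-2)}$ bound, and in dimension $2$ the subtracted series produces the $O(\log(2 + d(0,x)))$ bound. For general $v$ these pass through formula (4) using the fact that $T_v$ has exponential tail and moves the walk a bounded distance on average. For $g_\eta$ with $\eta \in C^j(\sT)$, I Taylor expand $g_v(x)$ in $v$ about $0$ to order $j$; the first $j$ moments of $\eta$ vanish by the very definitions of $C^1(\sT)$ and $C^2(\sT)$ (which are exactly the sum and first-moment-against-$\E[Y_{v,T_v}]$ vanishing conditions), so the leading Taylor terms cancel in $g_\eta(x) = \sum_v \eta(v) g_v(x)$ and the decay improves to $(1 + d(x,0))^{-(d-2+j)}$. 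The main obstacle is securing a uniform local CLT for $\mu^{*n}$ with Gaussian tails valid on all of $\Lambda$, together with enough smoothness to Taylor expand $g_v(x)$ in $v$ and to match the Taylor remainders against the specific moment-vanishing conditions encoded in the $C^j$ definitions.
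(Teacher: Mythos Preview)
The paper does not prove this lemma; it is quoted verbatim from the companion paper \cite{HS19}, so there is no in-paper argument to compare against. Your sketch is the standard derivation and is correct in outline: the excursion decomposition at the stopping time $T_v$ is exactly how one passes from the SRW Green's function on $\sT$ to the induced-walk Green's function on $\Lambda$, and the local CLT for $\mu$ on $\Lambda$ supplies both convergence of the series and the asymptotic decay rates.

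One point to tighten: in dimension $2$ your derivation of the $v\notin\Lambda$ formula starts from the divergent series $\sum_{n\geq 0} P^n(v,x)$ and then splits it at $T_v$. This split is formal; you need to regularize first (truncate at time $N$, subtract the diagonal term as in the $x\in\Lambda$ formula, then let $N\to\infty$), or else take the stated formula as the \emph{definition} of $g_v$ for $v\notin\Lambda$ and verify $\Delta g_v=\delta_v$ directly. Your $C^j$ decay argument is right once ``Taylor expand $g_v(x)$ in $v$'' is read as: use formula (4) to replace $g_v$ by $\E[g_{Y_{v,T_v}}]$ up to a compactly supported correction, invoke translation invariance $g_w(x)=g_0(x-w)$ for $w\in\Lambda$, and expand $g_0(x-\cdot)$ in finite differences. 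This is precisely what makes the $C^2$ condition $\sum_v\eta(v)\,\E[Y_{v,T_v}]=0$ cancel the first-order term, so the match between the $C^j$ definitions and the order of cancellation is not accidental.
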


Using this lemma the following explicit evaluations are obtained for several lattice tilings.  These are used for numerical computations.

\subsection{Triangular lattice} This is a lattice tiling, so the Green's function may be calculated without appealing to the stopping time argument above.  Let $v_1 = (1,0)$ and $v_2 = \left(\frac{1}{2}, \frac{\sqrt{3}}{2}\right)$.
\begin{figure}\centering
 \begin{tikzpicture}[node distance = 1.3cm, auto, place/.style = {circle, 
 thick, draw=blue!75, fill=blue!20}]
  \draw (-1, .86602540378)--(2.5, .86602540378);
  \draw (-1,0)--(2.5,0);
  \draw (-1, -.86602540378)--(2.5,-.86602540378);
  \draw (-1, 1.73205080757)--(2.5, 1.73205080757);
  \draw (-0.5, -.86602540378)--(1, 1.73205080757);
  \draw (-1, 0)--(0, 1.73205080757);
  \draw (.5, -.86602540378)--(2, 1.73205080757);
  \draw (2.5, -.86602540378)--(1, 1.73205080757);
  \draw (1.5, -.86602540378)--(0, 1.73205080757);
  \draw (.5, -.86602540378)--(-1, 1.73205080757);
  \draw (-.5, -.86602540378)--(-1, 0);
   \draw (1.5, -.86602540378)--(2.5, .86602540378);
   \draw (2, 1.73205080757)--(2.5, .86602540378);
   \draw [ultra thick, ->] (0,0)--(1,0) node[midway, below] {$v_1$};
   \draw [ultra thick, ->] (0,0)--(.5, .86602540378) node[midway, right] {$v_2$};
   \end{tikzpicture}
   \caption{The triangular lattice is spanned by vectors $v_1, v_2$.}\label{fig:triangular_lattice}
\end{figure}
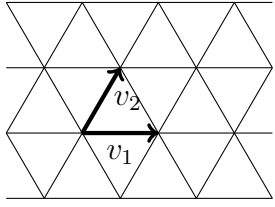
The lattice points take the form $n_1 v_1 + n_2 v_2$.  The lattice graph is regular of degree 6 and the nearest neighbors to 0 are $\{\pm v_1, \pm v_2, \pm (v_1 - v_2)\}$.
Let $\mu$ be the measure
\begin{equation}
 \mu = \frac{1}{6}\left(\delta_{v_1} + \delta_{-v_1} + \delta_{v_2} + \delta_{-v_2} + \delta_{v_1-v_2} + \delta_{v_2-v_1}\right).
\end{equation}
The Green's function from 0 is
\begin{equation}
 g_0(n_1 v_1 + n_2 v_2) = \frac{1}{6}\sum_{n=0}^\infty \mu^{*n}(n_1v_1 +n_2v_2) - \mu^{*n}(0).
\end{equation}
This can be obtained via inverse Fourier transform by
\begin{equation}
 g_0(n_1 v_1 + n_2 v_2) = \frac{1}{6}\int_{\bR^2/\zed^2} \frac{e(n_1 x_1 + n_2x_2)-1}{1 - \frac{1}{3}\left(c(x_1) + c(x_2) + c(x_1-x_2) \right)} dx_1 dx_2.
\end{equation}

\subsection{Honeycomb tiling} This can be constructed from the triangular lattice as follows.  Let $v = \frac{1}{3} (v_1 +v_2)$, which is the centroid of the equilateral triangle with vertices at $\{0, v_1, v_2\}$. 
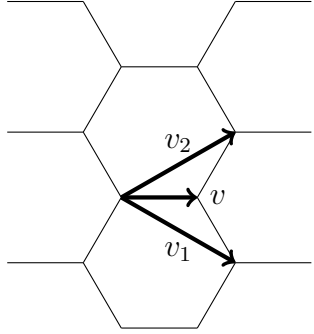
\begin{figure}\centering
  \begin{tikzpicture}[node distance = 1.3cm, auto, place/.style = {circle, 
  thick, draw=blue!75, fill=blue!20}]
  \draw (0, 0)--(1, 0);
  \draw (0, 0)--(-.5, .86602540378);
  \draw (-.5, .86602540378)--(0, 1.73205080757);
  \draw (0, 1.73205080757)--(1, 1.73205080757);
  \draw (1, 1.73205080757)--(1.5, .86602540378);
  \draw (1.5, .86602540378)--(1,0);
  \draw (0,0)--(-.5, -.86602540378);
  \draw (1,0)--(1.5, -.86602540378);
  \draw (-1.5, .86602540378)--(-.5, .86602540378);
  \draw (1.5, .86602540378)--(2.5, .86602540378);
  \draw (0, 1.73205080757)--(-.5, 3*.86602540378);
  \draw (1, 1.73205080757)--(1.5, 3*.86602540378);
  \draw (1.5, 3*.86602540378) --(2.5, 3*.86602540378);
  \draw (-.5, 3*.86602540378)--(-1.5, 3*.86602540378);
  \draw (-.5, -.86602540378)--(-1.5, -.86602540378);
  \draw (1.5, -.86602540378)--(2.5, -.86602540378);
  \draw (1.5, -.86602540378)--(1, -2*.86602540378);
  \draw (-.5, -.86602540378)--(0, -2*.86602540378);
  \draw (0, -2*.86602540378)--(1, -2*.86602540378);
     \draw [ultra thick, ->] (0,0)--(1,0) node[right] {$v$};
   \draw [ ultra thick, ->] (0,0)--(1.5, .86602540378) node[midway, above] {$v_2$};
   \draw [ ultra thick, ->] (0,0)--(1.5, -.86602540378) node[midway, below] {$v_1$};
 \end{tikzpicture}
   \caption{Coordinates in the honeycomb tiling are given in terms of the basis for the triangular lattice, $v_1, v_2$ and $v = \frac{1}{3}(v_1 + v_2)$.}\label{fig:honeycomb_tiling}
 \end{figure}
 
The vertices in the tiling have the form $n_1v_1 + n_2v_2$ and $n_1 v_1 + n_2v_2 + v$ with $n_1, n_2 \in \zed$.  This is a 3-regular graph.  The neighbors of a point $n_1 v_1 + n_2 v_2$ are given by $n_1 v_1 + n_2 v_2 + \{v, -v_1 + v, -v_2 +v\}$.  The neighbors of a point $n_1 v_1 + n_2 v_2 + v$ are $n_1 v_1 + n_2 v_2 + v + \{-v, -v + v_1, -v + v_2 \}$. The tiling has reflection symmetry in the lines in the directions of $v, -v_1 + v, -v_2 + v$ and their translates in the triangular lattice.  Random walk started from 0 stops always on the triangular lattice in two steps, so the stopped measure is
\begin{equation}
 \mu = \frac{1}{3}\delta_0 + \frac{1}{9} \left(\delta_{v_1} + \delta_{-v_1} + \delta_{v_2} + \delta_{-v_2} + \delta_{v_1-v_2} + \delta_{v_2-v_1}\right).
\end{equation}
The Green's function started from 0 is given on the triangular lattice by
\begin{equation}
 g_0(n_1 v_1 + n_2 v_2) = \frac{1}{3} \sum_{n=0}^\infty \mu^{*n}(n_1v_1 + n_2v_2) - \mu^{*n}(0),
\end{equation}
which has the integral representation
\begin{equation}
 g_0(n_1v_1 + n_2v_2) = \frac{1}{3} \int_{\bR^2/\zed^2} \frac{e(n_1 x_1 + n_2 x_2)-1}{\frac{2}{3} - \frac{2}{9}\left(c(x_1) + c(x_2) + c(x_1-x_2) \right)}dx_1 dx_2.
\end{equation}
By harmonicity, 
\begin{align*}
 g_0(n_1v_1 + n_2 v_2 + v) = \frac{1}{3}(&g_0(n_1v_1 + n_2v_2) + g_0((n_1 + 1)v_1 + n_2 v_2)\\& + g_0(n_1 v_1 + (n_2+1)v_2) ).
\end{align*}
By symmetry,
\begin{equation}
 g_v(n_1v_1 + n_2 v_2 + v) = g_0(n_1 v_1 + n_2 v_2).
\end{equation}
Again by harmonicity,
\begin{align*}
 g_v(n_1v_1 + n_2v_2) = \frac{1}{3}(& g_0(n_1v_1 + n_2 v_2) + g_0((n_1 -1)v_1 + n_2v_2)\\
 &+ g_0(n_1v_1 + (n_2-1)v_2)).
\end{align*}

\subsection{Face centered cubic lattice} This is a lattice tiling in $\bR^3$ generated by vectors 
\begin{equation}
 v_1 = \left(1,0,0 \right), \qquad v_2 = \left(\frac{1}{2}, \frac{\sqrt{3}}{2},0 \right), \qquad v_3 = \left(\frac{1}{2}, \frac{\sqrt{3}}{6}, \frac{\sqrt{6}}{3} \right),
\end{equation}
which are the vertices of a regular tetrahedron.  The tiling graph is regular of degree 12.  The neighbors of 0 are
\begin{equation}
 \{ \pm v_1, \pm v_2, \pm v_3, \pm(v_1-v_2), \pm (v_1-v_3), \pm (v_2-v_3)\}.
\end{equation}

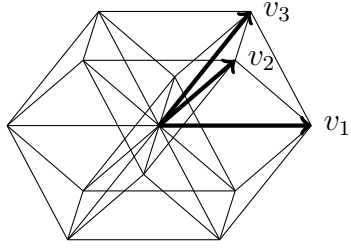
\begin{figure}\centering
\begin{tikzpicture}[scale = 2, node distance = 1.3cm, auto, place/.style = {circle, 
thick, draw=blue!75, fill=blue!20}]
\draw (0,0)--(1,0);
\draw (0,0)--(0.5,0.433012701892219);
\draw (0,0)--(0.602062072615966,0.756710002993201);
\draw (0,0)--(-1,0);
\draw (0,0)--(-0.5,-0.433012701892219);
\draw (0,0)--(-0.602062072615966,-0.756710002993201);
\draw (0,0)--(0.5,-0.433012701892219);
\draw (0,0)--(0.397937927384034,-0.756710002993201);
\draw (0,0)--(-0.102062072615966,-0.323697301100982);
\draw (0,0)--(-0.5,0.433012701892219);
\draw (0,0)--(-0.397937927384034,0.756710002993201);
\draw (0,0)--(0.102062072615966,0.323697301100982);
\draw (1,0)--(0.5,0.433012701892219);
\draw (1,0)--(0.602062072615966,0.756710002993201);
\draw (1,0)--(0.5,-0.433012701892219);
\draw (1,0)--(0.397937927384034,-0.756710002993201);
\draw (0.5,0.433012701892219)--(0.602062072615966,0.756710002993201);
\draw (0.5,0.433012701892219)--(-0.5,0.433012701892219);
\draw (0.5,0.433012701892219)--(-0.102062072615966,-0.323697301100982);
\draw (0.602062072615966,0.756710002993201)--(0.102062072615966,0.323697301100982);
\draw (0.602062072615966,0.756710002993201)--(-0.397937927384034,0.756710002993201);
\draw (0.102062072615966,0.323697301100982)--(-0.397937927384034,0.756710002993201);
\draw (-1,0)--(-0.5,-0.433012701892219);
\draw (-1,0)--(-0.602062072615966,-0.756710002993201);
\draw (-1,0)--(-0.5,0.433012701892219);
\draw (-1,0)--(-0.397937927384034,0.756710002993201);
\draw (-0.5,-0.433012701892219)--(-0.602062072615966,-0.756710002993201);
\draw (-0.5,-0.433012701892219)--(0.5,-0.433012701892219);
\draw (-0.5,-0.433012701892219)--(0.102062072615966,0.323697301100982);
\draw (-0.602062072615966,-0.756710002993201)--(-0.102062072615966,-0.323697301100982);
\draw (-0.602062072615966,-0.756710002993201)--(0.397937927384034,-0.756710002993201);
\draw (-0.102062072615966,-0.323697301100982)--(0.397937927384034,-0.756710002993201);
\draw (0.5,-0.433012701892219)--(0.397937927384034,-0.756710002993201);
\draw (-0.5,0.433012701892219)--(-0.397937927384034,0.756710002993201);
\draw (0.5,-0.433012701892219)--(0.102062072615966,0.323697301100982);
\draw (-0.5,0.433012701892219)--(-0.102062072615966,-0.323697301100982);
   \draw [ultra thick, ->] (0,0)--(1,0) node[right] {$v_1$};
\draw [ ultra thick, ->] (0,0)--(0.5,0.433012701892219) node[right] {$v_2$};
\draw [ ultra thick, ->] (0,0)--(0.602062072615966,0.756710002993201) node[right] {$v_3$};
\end{tikzpicture}
   \caption{Coordinates in the face centered cubic lattice are given in terms of the vectors $v_1, v_2, v_3$.  Any two of these  span the triangular lattice.}\label{fig:fcc_lattice}
\end{figure}

Let $\mu$ be the measure which is uniform on these points. 
The Green's function started from 0 is given by
\begin{align*}
& g_0(n_1v_1 + n_2v_2 + n_3v_3) = \frac{1}{12} \sum_{n=0}^\infty \mu^{*n}(n_1v_1 +n_2v_2 + n_3v_3).
\end{align*}
The Fourier transform is
\begin{align*}
 &\hat{g}(x_1, x_2, x_3)=\\& \frac{1}{12 -2 (c(x_1) + c(x_2) + c(x_3) + c(x_1-x_2) + c(x_1-x_3) + c(x_2-x_3)) }.
\end{align*}

\subsection{$\Dfour$ lattice} The $\Dfour$ lattice is a lattice in $\bR^4$ which is frequently presented as the integer quaternion ring
\begin{equation}
 \bH(\zed) = \{n_1 + n_2 i + n_3 j + n_4k: \un \in \zed^4\}
\end{equation}
together with the points with odd half integer coordinates, 
\begin{equation}
 \Dfour = \bH(\zed) \cup \left(\bH(\zed) + \frac{1}{2}(1 + i +j+k)\right).
\end{equation}
This is a lattice tiling, which is regular of degree 24 as a graph.  The 24 neighbors of 0 are the units of the corresponding quaternion algebra,
\begin{equation}
U_4= \{\pm 1, \pm i, \pm j, \pm k\} \cup \left\{\frac{1}{2}(\epsilon_1+ \epsilon_2i+ \epsilon_3j+ \epsilon_4 k): \ueps \in \{\pm 1\}^4\right\}. 
\end{equation}
A basis for the lattice is given by $v_1 = 1$, $v_2 = i$, $v_3 = j$, $v_4 = \frac{1}{2}(1 + i +j + k)$. In these coordinates, the neighbors of 0 are
\begin{align*}
 &\{\pm v_1, \pm v_2, \pm v_3, \pm (2v_4 -v_1-v_2-v_3), \pm v_4, \pm (-v_1 + v_4), \pm(-v_2 + v_4), \\&\pm(-v_3+v_4), \pm(-v_1-v_2 + v_4), \pm(-v_1-v_3+v_4),\\& \pm (-v_2-v_3+v_4), \pm(-v_1-v_2-v_3+v_4)\}.
\end{align*}
Let $\mu$ be uniform on the neighbors of 0.  This measure has Fourier transform
\begin{align*}
 \hat{\mu}(x_1,x_2,x_3,&x_4) = \frac{1}{12}( c(x_1)+ c(x_2) + c(x_3) + c(2x_4-x_1-x_2-x_3)\\& + c(x_4) + c(-x_1 + x_4) + c(-x_2 + x_4) + c(-x_3+x_4)\\& + c(-x_1-x_2+x_4) + c(-x_1 -x_3 +x_4) + c(-x_2-x_3+x_4)\\& + c(-x_1-x_2-x_3+x_4)).
\end{align*}
The Green's function is given by
\begin{align*}
 &g_0(n_1v_1+n_2v_2+n_3v_3+n_4v_4) \\&= \frac{1}{24}\int_{\bR^4/\zed^4} \frac{e(n_1x_1+n_2x_2+n_3x_3+n_4x_4)}{1-\hat{\mu}(x_1,x_2,x_3,x_4)}dx_1dx_2dx_3dx_4.
\end{align*}

\subsection{$\zed^d$ lattice} For $d \geq 3$ the lattice $\zed^d$ has Green's function
\begin{equation}
 g_0(\un) = \frac{1}{2d} \int_{\bR^d/\zed^d} \frac{e(\un \cdot \ux)}{1- \frac{1}{d}\left(c(x_1)+\cdots+ c(x_d) \right)} d\ux.
\end{equation}

\section{Optimization problem and computer search}

In this section the spectral parameters are determined by computer search for several tilings. 
Recall that
\begin{equation*}
 \gamma = \inf \left\{ \sum_{x \in \sT} 1 - \cos(2\pi \xi_x):\Delta \xi \in C^1(\sT), \xi \not \equiv 0 \bmod 1\right\},
\end{equation*} and 
\begin{equation*}
 \gamma_i =  \inf_{\substack{S \subset \{1, 2, ..., d\}\\ |S| = i}} \inf_{\substack{\xi \in \sH_S(\sT)\\ \xi \not \equiv 0 \bmod 1}} \sum_{x \in \sT/\fS_S} 1- c( \xi_x).
\end{equation*}

The following arguments index harmonic modulo 1 function $\xi$ with its prevector $\nu = \Delta \xi$, which is simpler as the prevector is integer valued.  This permits an approximate ordering on prevectors in terms of their norm, and the diameter of their support.  The harmonic modulo 1 function is then recovered as $\xi = g * \nu$ as the following lemma demonstrates.
\begin{lemma}
 Let $\xi \in \ell^2(\sT)$ be harmonic modulo 1, and let $\nu = \Delta \xi$ be its prevector.  Then $\xi = g_\nu$.
\end{lemma}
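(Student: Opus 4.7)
My plan is to show that the difference $h := \xi - g_\nu$ lies in $\ell^2(\sT)$ and is harmonic on $\sT$, and hence vanishes identically.

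First I would verify that the prevector $\nu$ has finite support. Since $\xi\in\ell^2(\sT)$ we have pointwise decay $\xi(v)\to 0$, so choosing a finite ball $B\subset\sT$ outside of which $|\xi(v)|<\frac{1}{2(D+1)}$ (where $D$ is the maximum vertex degree of $\sT$), the defining expression $\Delta\xi(v)=\deg(v)\xi(v)-\sum_{w\sim v}\xi(w)$ has absolute value strictly less than $1$ for every $v$ with $v$ and all its neighbors outside $B$. Since $\nu=\Delta\xi$ is integer valued, $\nu(v)=0$ at such $v$, so $\supp\nu$ is finite and $g_\nu=\sum_v\nu(v)g_v$ is a finite linear combination of Green's functions, defined pointwise without any convergence issues.

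Next I would check that $\nu\in C^\rho(\sT)$, so that the previous lemma yields $g_\nu\in\ell^2(\sT)$. The natural route is Fourier/Bloch analysis with respect to the periodicity lattice $\Lambda$. The Laplace symbol $m(\theta)$ satisfies $m(\theta)\asymp|\theta|^2$ near $\theta=0$, and the identity $\Delta\xi=\nu$ reads $m(\theta)\hat\xi(\theta)=\hat\nu(\theta)$. Since $\hat\nu$ is a trigonometric polynomial while $\hat\xi\in L^2$, local $L^2$-integrability of $\hat\nu/m$ near the origin forces $\hat\nu$ to vanish to appropriate order: no condition for $d\geq 5$; $\hat\nu(0)=0$ for $d\in\{3,4\}$; and additionally $\nabla\hat\nu(0)=0$ for $d=2$. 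Translated via Taylor expansion, these are exactly the conditions $\sum_v\nu(v)=0$ and $\sum_v\nu(v)\E[Y_{v,T_v}]=0$ defining $C^\rho(\sT)$, with the stopping-time weighting appearing through the matrix-valued Bloch symbol in the case of non-lattice tilings.

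With $g_\nu\in\ell^2(\sT)$ established, $h=\xi-g_\nu$ is in $\ell^2(\sT)$ and satisfies $\Delta h=\nu-\nu=0$. Summation by parts (justified by $\ell^2$ membership together with the bounded degree of $\sT$) yields
\[
0=\langle h,\Delta h\rangle=\sum_{\{v,w\}\in E}\bigl(h(v)-h(w)\bigr)^2,
\]
so $h$ is constant on each connected component of $\sT$; since $\sT$ is connected and $h\in\ell^2$ on an infinite graph, $h\equiv 0$ and therefore $\xi=g_\nu$. The step I expect to be the main obstacle is the second one: for a pure lattice tiling the Fourier argument is a one-line polar-coordinate calculation, but for a mixed tiling such as the honeycomb or the $\Dfour$ lattice one must carry it out with the matrix-valued Bloch symbol and identify the leading Taylor coefficient of $\hat\nu$ with the stopping-time-weighted first moment defining $C^2(\sT)$ in Section~\ref{Greens_function_section}.
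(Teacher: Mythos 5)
Your proposal is a genuinely different route from the paper's. The paper's proof sidesteps the question of whether $\nu \in C^\rho(\sT)$ entirely: for each $x \in \Lambda$ it forms $\xi^x = \xi - T_x \xi$, whose prevector $\nu^x = \nu - T_x \nu$ has zero sum \emph{automatically} and so lies in $C^1(\sT)$ regardless of what $\nu$ is; then the decay estimate gives $g_{\nu^x}(y) \to 0$ at infinity, the maximum principle forces $\xi^x = g_{\nu^x}$, and sending $x \to \infty$ in $\Lambda$ recovers $\xi = g_\nu$. Your proof instead aims to show directly that $\nu \in C^\rho(\sT)$, then invokes $g_\nu \in \ell^2$ and kills the harmonic $\ell^2$ difference $h = \xi - g_\nu$ via the Dirichlet form. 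Your steps 1 (finite support of $\nu$, a clean argument the paper leaves implicit) and 3 (the Dirichlet-energy argument replacing the maximum principle, which is a legitimate variant) are fine.

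The issue is step 2, which you yourself flag as the main obstacle, and I agree it is a genuine gap rather than a detail. For a non-lattice tiling the Bloch symbol $m(\theta)$ is a $k \times k$ matrix with a single eigenvalue $\lambda_0(\theta) \asymp |\theta|^2$ vanishing at $\theta = 0$, and the $L^2$-integrability of $\hat\xi$ near $0$ depends on the Taylor expansion of $\langle \hat\nu(\theta), u(\theta)\rangle$ where $u(\theta)$ is the $\theta$-dependent null eigenvector, not just on $\hat\nu$ itself. Identifying the resulting first-order condition with the stopping-time moment $\sum_x \nu(x)\,\E[Y_{x,T_x}] = 0$ that defines $C^2(\sT)$ requires nontrivial work — essentially a piece of homogenization / local CLT theory relating $\nabla u(0)$ to the expected exit displacement. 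Nothing in your sketch indicates this would fail, so I would describe the proposal as a correct strategy with a substantial unproved technical lemma, in contrast to the paper's translation trick which manufactures a prevector already in $C^1(\sT)$ and thereby avoids the Bloch analysis entirely, at the modest cost of an extra limiting step at the end.
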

\begin{proof}
 Given $x \in \Lambda$, let $T_x$ denote translation by $x$, and let $\xi^x = \xi - T_x \xi$.  Hence $\nu^x = \Delta(\xi^x) = \nu - T_x \nu$ is in $C^1(\sT)$.  It follows from \cite{HS19} that $g_{\nu^x}(y) \to 0$ as $d(0,y) \to \infty$.  Since 
 \[
  \Delta(\xi^x - g_{\nu^x}) = \nu^x - \nu^x = 0
 \]
and since $\xi^x - g_{\nu^x}$ vanishes at infinity, it follows from the maximum modulus principle that $\xi^x = g_{\nu^x}$.  As $x \to \infty$ for each fixed $y$, $\xi^x(y) \to \xi(y)$ and hence $g_{\nu}$ tends to 0 at infinity.  The argument may now be repeated with $\xi$ and $\nu$ replacing $\xi^x$ and $\nu^x$ to conclude $\xi = g_{\nu}$.
\end{proof}

The next lemma controls cosine sums of $\xi$ in terms of the $\ell^2$ norm.

\begin{lemma}\label{2_norm_lemma}
 Let $S$ be a finite or countable set and let $\xi \in \ell^2(S)$, $\|\xi\|_\infty \leq \frac{1}{2}$.  Define 
 \begin{equation}
  f_S(\xi) = \sum_{x \in S} 1 - c(\xi_x).
 \end{equation}
Let $\alpha > 0$ and assume $\|\xi\|_2^2 \geq \alpha$.  Then
\begin{equation}
  2\pi^2 \alpha \left(1 - \frac{\pi^2}{3} \alpha\right) \leq f_S(\xi) \leq 2\pi^2 \|\xi\|_2^2.
\end{equation}
\end{lemma}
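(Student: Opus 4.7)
The plan is to derive both inequalities from pointwise Taylor-type comparisons for $1-\cos\theta$ combined with an $\ell^p$-norm inclusion. The upper bound is immediate: the standard estimate $1-\cos\theta \leq \theta^2/2$ (valid for all real $\theta$) applied with $\theta = 2\pi\xi_x$ and summed over $x \in S$ yields $f_S(\xi) \leq 2\pi^2\|\xi\|_2^2$.

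For the lower bound, the main ingredient is the fourth-order Taylor underestimate
\[
1 - \cos\theta \;\geq\; \frac{\theta^2}{2} - \frac{\theta^4}{24}, \qquad \theta \in \bR,
\]
which I would verify by observing that $D(\theta) = (1-\cos\theta) - \theta^2/2 + \theta^4/24$ is even, satisfies $D(0) = 0$, and has $D'(\theta) = \sin\theta - \theta + \theta^3/6 \geq 0$ for $\theta \geq 0$ (itself a consequence of $D''(\theta) = \cos\theta - 1 + \theta^2/2 \geq 0$, equivalent to the upper bound just used, together with $D'(0) = 0$). Substituting $\theta = 2\pi\xi_x$, summing, and applying the elementary inclusion $\|\xi\|_4^4 \leq \|\xi\|_2^4$ for counting measure (immediate from expanding $(\sum_x \xi_x^2)^2 \geq \sum_x \xi_x^4$) gives
\[
f_S(\xi) \;\geq\; 2\pi^2\|\xi\|_2^2 - \tfrac{2\pi^4}{3}\|\xi\|_4^4 \;\geq\; 2\pi^2\|\xi\|_2^2\bigl(1 - \tfrac{\pi^2}{3}\|\xi\|_2^2\bigr) =: h(\|\xi\|_2^2).
\]

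Finally, to replace $\|\xi\|_2^2$ by $\alpha$ in $h$, I note that $h$ is a downward parabola with $h(0) = h(3/\pi^2) = 0$ and maximum $3/2$ at $u = 3/(2\pi^2)$, hence increasing on $[0, 3/(2\pi^2)]$. For $\alpha \geq 3/\pi^2$ the claim is vacuous since $h(\alpha) \leq 0 \leq f_S(\xi)$, and when $\alpha, \|\xi\|_2^2 \in [0, 3/(2\pi^2)]$ monotonicity yields $h(\|\xi\|_2^2) \geq h(\alpha)$. The main obstacle is the intermediate regime $\|\xi\|_2^2 > 3/(2\pi^2)$, where $h$ is decreasing and the Taylor chain alone no longer suffices; there I would conclude by a direct concentration argument, observing that $\phi(y) := 2\sin^2(\pi\sqrt{y})$ is concave on $[0, 1/4]$ with $\phi(0) = 0$ (a short second-derivative computation shows $\phi'' \leq 0$ there) and hence subadditive, so the infimum of $f_S(\xi) = \sum_x \phi(\xi_x^2)$ subject to $\sum_x \xi_x^2 = u$ and $\|\xi\|_\infty \leq 1/2$ is attained by concentrating $\xi$ on a single coordinate. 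This yields $f_S(\xi) \geq 2\sin^2(\pi\sqrt{u}) \geq 2\sin^2(\sqrt{3/2}) > 3/2 \geq h(\alpha)$ throughout this regime, closing the argument.
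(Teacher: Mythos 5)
Your strategy — pass through the intermediate bound $f_S(\xi) \geq h(\|\xi\|_2^2)$ with $h(u) = 2\pi^2 u\bigl(1-\tfrac{\pi^2}{3}u\bigr)$ and then compare $h(\|\xi\|_2^2)$ against $h(\alpha)$ — forces the casework around the vertex of the parabola, and the case $\|\xi\|_2^2 = u > \tfrac14$ contains a genuine gap. The sup-norm constraint $\xi_x^2 \leq \tfrac14$ makes it impossible to concentrate all the mass on a single coordinate when $u > \tfrac14$, so the claimed bound $f_S(\xi) \geq 2\sin^2(\pi\sqrt{u})$ does not follow from the subadditivity of $\phi(y) = 2\sin^2(\pi\sqrt{y})$, and is in fact false as stated for large $u$ (e.g.\ at $u=1$ the right side vanishes). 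The gap is patchable: since $\phi$ is concave on $[0,\tfrac14]$ with $\phi(0)=0$ and $\phi(\tfrac14)=2$, one has $\phi(y) \geq 8y$ on that interval, whence $f_S(\xi) = \sum_x \phi(\xi_x^2) \geq 8u > 2 > \tfrac32 \geq h(\alpha)$ whenever $u > \tfrac14$; your subadditivity argument then only needs to cover $\tfrac{3}{2\pi^2} < u \leq \tfrac14$, where it is valid. (One should also note that for countable $S$ the reduction to extreme points needs a limiting argument, though for $u \leq \tfrac14$ the pointwise subadditivity inequality $\sum\phi(y_i) \geq \phi(\sum y_i)$ suffices directly.)

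The paper avoids all of this with a scaling trick that is the key idea missing from your write-up. Set $\xi' = \lambda\xi$ with $\lambda = \sqrt{\alpha/\|\xi\|_2^2} \in (0,1]$, so that $\|\xi'\|_2^2 = \alpha$ exactly. Because $1-c(x)$ is even and non-decreasing on $[0,\tfrac12]$, each summand satisfies $1-c(\xi'_x) \leq 1-c(\xi_x)$, hence $f_S(\xi') \leq f_S(\xi)$. Now apply exactly the Taylor lower bound you derived, but to $\xi'$: using $\|\xi'\|_4^4 \leq \|\xi'\|_\infty^2\|\xi'\|_2^2 \leq \|\xi'\|_2^4 = \alpha^2$ one gets
\begin{equation*}
f_S(\xi) \geq f_S(\xi') \geq 2\pi^2\alpha - \tfrac{2\pi^4}{3}\|\xi'\|_4^4 \geq 2\pi^2\alpha - \tfrac{2\pi^4}{3}\alpha^2 = h(\alpha)
\end{equation*}
with no casework at all. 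So the two arguments share the same Taylor estimates, but the scaling reduction to $\|\xi\|_2^2 = \alpha$ replaces your three-way case split (monotone regime, concentration regime, vacuous regime) with a single line, and in particular renders the concavity-of-$\phi$ lemma unnecessary.
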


\begin{proof}
The Taylor series approximation for $c(x)$ on $|x| \leq \frac{1}{2}$,
\begin{align*}
 c(x) = 1 - 2 \pi^2 x^2 + \frac{2\pi^4}{3} x^4 - \cdots
\end{align*}
 is an alternating series with decreasing increments  after the term $2\pi^2 x^2$.  Thus $f_S(\xi) \leq 2\pi^2 \|\xi\|_2^2$. 
 Let $0 < \lambda \leq 1$ and let $\xi' = \lambda \xi$ satisfy  $\| \xi'\|_2^2 = \alpha$.  Then $f_S(\xi') \leq f_S(\xi)$.  Furthermore, 
 \begin{align*}
f_S(\xi') &\geq 2\pi^2 \|\xi'\|_2^2 - \frac{2}{3}\pi^4 \|\xi'\|_4^4 \\&\geq 2\pi^2 \alpha - \frac{2}{3} \pi^4 \alpha \|\xi'\|_\infty^2 \\&\geq 2\pi^2 \alpha - \frac{2}{3}\pi^4 \alpha^2.
 \end{align*}
\end{proof}
The following lemma is used to estimate the functionals $f(\xi)$.
\begin{lemma}\label{f_approx_lemma}
 Let $R \subset \sT$ and let $\xi: \sT \to \left(-\frac{1}{2}, \frac{1}{2}\right]$. Let 
 \begin{equation}
\left\|\xi\right\|_{2, R^c}^2 = \sum_{x \in \sT \setminus R} \xi_x^2.  
 \end{equation}
 There is a number $\vartheta$, $|\vartheta| \leq 1$ such that
 \begin{equation}
  f(\xi) = \sum_{x \in R}\left( 1-c(\xi_x)\right) +2\pi^2 \|\xi\|_{2, R^c}^2 - \frac{\pi^4}{3} \|\xi\|_{2,R^c}^4 + \vartheta \frac{\pi^4}{3}\|\xi\|_{2,R^c}^4.
 \end{equation}

\end{lemma}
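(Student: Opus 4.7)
The plan is to keep the sum $\sum_{x\in R}(1-c(\xi_x))$ exactly and Taylor expand the cosine on the complementary set. Writing
\[
 f(\xi) = \sum_{x\in R}(1-c(\xi_x)) + \sum_{x\in R^c}(1-c(\xi_x)),
\]
only the second sum needs estimation, and the target formula encodes a two-sided bound of the form $2\pi^2\|\xi\|_{2,R^c}^2 - \frac{2\pi^4}{3}\|\xi\|_{2,R^c}^4 \leq \sum_{x\in R^c}(1-c(\xi_x)) \leq 2\pi^2\|\xi\|_{2,R^c}^2$ (the centre of this interval is $2\pi^2\|\xi\|_{2,R^c}^2 - \frac{\pi^4}{3}\|\xi\|_{2,R^c}^4$ and its half-width is $\frac{\pi^4}{3}\|\xi\|_{2,R^c}^4$), so the whole task reduces to producing this sandwich.

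To get the sandwich, I would exploit the alternating series expansion $1 - \cos(2\pi\xi_x) = \sum_{k\geq 1}(-1)^{k+1}\frac{(2\pi\xi_x)^{2k}}{(2k)!}$ just as in the proof of Lemma \ref{2_norm_lemma}. For $|\xi_x|\leq\tfrac{1}{2}$ the ratio of consecutive magnitudes is
\[
 \frac{(2\pi\xi_x)^2}{(2k+1)(2k+2)} \leq \frac{\pi^2}{(2k+1)(2k+2)} \leq \frac{\pi^2}{12} < 1,
\]
so the series is strictly alternating with decreasing magnitudes, and the standard truncation estimate gives pointwise
\[
 2\pi^2\xi_x^2 - \tfrac{2\pi^4}{3}\xi_x^4 \;\leq\; 1 - c(\xi_x) \;\leq\; 2\pi^2\xi_x^2.
\]
Summing these inequalities over $R^c$ produces $\sum_{x\in R^c}(1-c(\xi_x)) \leq 2\pi^2\|\xi\|_{2,R^c}^2$ directly, and the lower bound $2\pi^2\|\xi\|_{2,R^c}^2 - \frac{2\pi^4}{3}\|\xi\|_{4,R^c}^4$.

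Finally I would pass from the $\ell^4$ quantity to the $\ell^2$ quantity via $\|\xi\|_{4,R^c}^4 \leq \|\xi\|_{2,R^c}^4$, which is the trivial inequality $\sum_x a_x^2 \leq \bigl(\sum_x a_x\bigr)^2$ applied to the nonnegative numbers $a_x = \xi_x^2$. This gives $2\pi^2\|\xi\|_{2,R^c}^2 - \frac{2\pi^4}{3}\|\xi\|_{2,R^c}^4 \leq \sum_{x\in R^c}(1-c(\xi_x))$, completing the sandwich. Writing the sum as the midpoint of the interval plus $\vartheta$ times its half-width with $|\vartheta|\leq 1$, and adding back the exact contribution from $R$, yields the claimed identity. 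I do not anticipate any real obstacle here; the only care required is the verification that the Taylor series is genuinely alternating with decreasing magnitudes on $[-\tfrac12,\tfrac12]$, which is a one-line ratio test as above.
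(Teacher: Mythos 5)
Your proof is correct and follows essentially the same route as the paper's: Taylor-expand $1-c(\xi_x)$ pointwise on $R^c$ using the alternating-series truncation bound to get $2\pi^2\xi_x^2 - \tfrac{2}{3}\pi^4\xi_x^4 \leq 1-c(\xi_x) \leq 2\pi^2\xi_x^2$, sum over $R^c$, pass from $\sum\xi_x^4$ to $\bigl(\sum\xi_x^2\bigr)^2$, and rewrite the resulting interval as midpoint plus $\vartheta$ times half-width. You are somewhat more explicit than the paper about the ratio test that certifies the alternating series has decreasing magnitudes and about the $\ell^4\to\ell^2$ passage, but the argument is the same.
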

\begin{proof}
 By Taylor approximation, for $x \in R^c$, \begin{equation*}2\pi^2 \xi_x^2 - \frac{2}{3} \pi^4 \xi_x^4 \leq 1-c(\xi_x) \leq 2\pi^2 \xi_x^2.\end{equation*}  Thus,
 \begin{align*}
  \sum_{x \in R} (1-c(\xi_x))& + 2\pi^2 \|\xi_x\|_{2,R^c}^2 - \frac{2}{3}\pi^4 \|\xi\|_{2, R^c}^4 \\&\leq f(\xi) \leq \sum_{x \in R} (1-c(\xi_x)) + 2\pi^2 \|\xi\|_{2,R^c}^2,
 \end{align*}
from which the claim follows.
\end{proof}
In practice, Lemma \ref{f_approx_lemma} is applied by calculating $\xi_x$ on $R$ from the Fourier integral representations in Section \ref{Greens_function_section} in a neighborhood of 0, and calculating $\|\xi\|_2^2$ by Parseval.  Note that each Fourier integral contains a singularity at 0.  The integrand can be converted to a bounded function of bounded derivatives by switching to spherical coordinates of the appropriate dimension.

The following two optimization programs are used to obtain a lower bound for $f(\xi)$. 
Let $\xi = g* \nu$, $\|\xi\|_\infty \leq \frac{1}{2}$.  Given a set $S \subset \sT$, a lower bound for $f(\xi)$ is obtained as the solution of the optimization program $Q(S, \nu)$,
\begin{align*}
 Q(S, \nu):&\\
 \text{minimize:}& \qquad \sum_{d(w, S) \leq 1} 1-c(x_w)\\
 \text{subject to:}& \qquad \forall u \in S,\; (\deg u) x_u - \sum_{d(w,u)=1} x_w = \nu_u\\
 & \qquad -\frac{1}{2} \leq x_w \leq \frac{1}{2}.
\end{align*}
A lower bound for $Q(S, \nu)$ is the relaxed optimization program with positive constraints $P(S, \nu)$
\begin{align*}
 P(S, \nu):&\\
 \text{minimize:}& \qquad \sum_{d(w, S) \leq 1} 1-c(x_w)\\
 \text{subject to:}& \qquad \forall u \in S,\; (\deg u) x_u + \sum_{d(w,u)=1} x_w \geq \nu_u\\
 & \qquad -\frac{1}{2} \leq x_w \leq \frac{1}{2}.
\end{align*}
Note that the objective function is convex and with non-degenerate Hessian in the interior with the stronger condition $|x_w|\leq \frac{1}{4}$, and hence has a unique local minima there.  In order to estimate $Q(S, \nu)$ and $P(S, \nu)$ numerically, the range $\frac{1}{4} \leq |x_w| \leq \frac{1}{2}$ was split into several equal size intervals and the objective function was approximated piecewise linearly on these, obtaining a lower bound for the minimum.  The minima were compared with the variables constrained to lie in each interval. Denote $P_j(S, \nu)$ and $Q_j(S, \nu)$ the programs in which both $\left[-\frac{1}{2}, -\frac{1}{4}\right]$ and $\left[\frac{1}{4}, \frac{1}{2}\right]$ are split into $j$ equal size intervals, and objective function interpolating linearly between the values of $c(x)$ on the endpoints.  Note that the minimum of $P_j$ and $Q_j$ on each product of intervals is determined as a unique interior minimum or boundary value.
In the examples considered in dimensions 3 and higher, $\|\xi\|_2^2$ was optimized rather than $f(\xi)$, and it was demonstrated that the extremal function is the same.  Programs $Q'(S, \nu)$ and $P'(S, \nu)$ have the same constraints, but have objective function
$\sum_{d(w, S) \leq 1} x_w^2.$  Note that this objective function is convex.

The optimization programs $P, P_j, P', Q, Q_j, Q'$ satisfy the following monotonicity properties.
\begin{lemma}
 The programs $P, P_j, P', Q, Q_j, Q'$ are monotone increasing in the set $S$.  The programs $P, P_j, P'$ are monotone increasing in the prevector $|\nu|$.
\end{lemma}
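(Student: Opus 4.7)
The plan is to establish both claims by elementary feasibility arguments: enlarging $S$ only adds further constraints and non-negative summands to the objective, while increasing $|\nu|$ only tightens the right-hand sides of the inequalities that define $P, P_j, P'$. Nothing more subtle should be needed, since the programs are linearly constrained and the objectives are pointwise sums of non-negative functions.

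For monotonicity in $S$, I would fix $S \subseteq S'$, take any optimizer $x^*$ of the $S'$-version of whichever program is at hand, and restrict it to the variables indexed by $\{w : d(w, S) \leq 1\} \subseteq \{w : d(w, S') \leq 1\}$. Because $S \subseteq S'$, every constraint of the $S$-program is also a constraint of the $S'$-program, so $x^*$ remains feasible after restriction. The objective of the $S$-program at this restriction is a sub-sum of the objective at $x^*$, and each omitted summand is non-negative: either $1 - c(x_w) \geq 0$ in the cases of $P, Q$; or $x_w^2 \geq 0$ in the cases of $P', Q'$; or the value of the piecewise linear lower interpolant of $1-c$ on $[-1/2, 1/2]$ in the cases of $P_j, Q_j$, which is also non-negative. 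Hence the $S$-program's minimum is bounded above by the $S'$-program's minimum.

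For monotonicity in $|\nu|$ of $P, P_j, P'$, the vector $\nu$ enters only on the right-hand side of the constraint at each $u \in S$, with the sign convention of the relaxation placing $|\nu_u|$ on that right-hand side (as emerges from applying the triangle inequality to the equality constraint of $Q$ to produce the inequality relaxation $P$). If $|\nu| \leq |\nu'|$ pointwise on $S$, then any feasible $x$ for $P(S, |\nu'|)$ automatically satisfies the weaker inequalities
\[
(\deg u)\, x_u + \sum_{d(w,u) = 1} x_w \geq |\nu_u|, \qquad u \in S,
\]
and is therefore feasible for $P(S, |\nu|)$. The feasible region thus shrinks as $|\nu|$ grows while the objective is unchanged, so the infimum is monotone non-decreasing; the same reasoning applies verbatim to $P_j$ and $P'$.

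I do not anticipate a serious obstacle: both statements reduce to the standard observation that shrinking the feasible set or enlarging a non-negative objective cannot decrease a minimum. The only subtlety worth flagging is that monotonicity in $|\nu|$ is not asserted (and would fail) for the equality programs $Q, Q_j, Q'$, since raising $|\nu_u|$ on the right-hand side of an equality constraint can render the system infeasible or shift the optimum non-monotonically; this is precisely why the second claim is restricted to the $P$-family.
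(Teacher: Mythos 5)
Your proof is correct and takes the same approach as the paper, which simply states ``This follows from constraint relaxation''; you have spelled out exactly what that means (enlarging $S$ adds constraints and non-negative summands, increasing $|\nu|$ tightens inequality constraints), and your remark about why the $Q$-family is excluded from the $|\nu|$-monotonicity claim is a correct and useful observation.
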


\begin{proof}
 This follows from constraint relaxation.
\end{proof}

The programs also satisfy the following additivity property.
\begin{lemma}
 Let $B(S) = \{u: d(u, S) \leq 1\}$ be the distance 1 enlargement of $S$.  When $S_1, S_2, ..., S_k$ are some sets in $\sT$ whose distance 1 enlargements $B(S_1), B(S_2), ..., B(S_k)$ are pairwise disjoint, then $\sum_{i=1}^k Q(S_i, \nu) \leq f(\xi)$ and $\sum_{i=1}^k Q'(S_i, \nu) \leq \|\xi\|_2^2$.
\end{lemma}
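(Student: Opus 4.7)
The plan is to show that, for each $i$, the restriction of the actual harmonic-modulo-$1$ function $\xi$ to the distance-$1$ enlargement $B(S_i)$ is a feasible point of the program $Q(S_i,\nu)$, then add up the resulting bounds, using that the $B(S_i)$ are pairwise disjoint so no vertex is counted twice.

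\medskip

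First I would check feasibility.  Pick any $i$ and define $x_w := \xi_w$ for $w \in B(S_i)$, where $\xi$ takes values in $\left(-\tfrac12,\tfrac12\right]$.  The box constraint $-\tfrac12 \leq x_w \leq \tfrac12$ is then automatic.  For every $u \in S_i$, the Laplacian constraint reads
\begin{equation*}
(\deg u)\,\xi_u - \sum_{d(w,u)=1} \xi_w \;=\; (\Delta \xi)_u \;=\; \nu_u,
\end{equation*}
and all the $\xi_w$ appearing in this equation lie inside $B(S_i)$, so the constraint involves only the variables of the program.  Hence the restriction of $\xi$ to $B(S_i)$ is a feasible point of $Q(S_i,\nu)$, and by definition
\begin{equation*}
Q(S_i,\nu) \;\leq\; \sum_{w \in B(S_i)} \bigl(1 - c(\xi_w)\bigr).
\end{equation*}

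\medskip

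Next I would sum over $i$.  Since the sets $B(S_i)$ are pairwise disjoint, each vertex $w$ appears in at most one $B(S_i)$, so
\begin{equation*}
\sum_{i=1}^k Q(S_i,\nu) \;\leq\; \sum_{i=1}^k \sum_{w \in B(S_i)} \bigl(1 - c(\xi_w)\bigr) \;\leq\; \sum_{w \in \sT} \bigl(1-c(\xi_w)\bigr) \;=\; f(\xi),
\end{equation*}
where the last inequality uses $1-c(\xi_w) \geq 0$.  The identical argument, with the objective $\sum_{d(w,S)\leq 1} x_w^2$ replacing $\sum_{d(w,S)\leq 1} 1-c(x_w)$ and everything else unchanged, gives $\sum_i Q'(S_i,\nu) \leq \|\xi\|_2^2$.

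\medskip

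There is no serious obstacle here; the statement is essentially the observation that the programs $Q(S_i,\nu)$ are local lower bounds extracted from $\xi$ itself, and disjointness of the enlargements $B(S_i)$ prevents double-counting when assembling the local contributions.  The only thing to be careful about is the precise choice of neighbourhood: the enlargement is by $1$ rather than by $0$ because the Laplacian constraint at $u \in S_i$ involves the neighbours of $u$, which lie in $B(S_i)$ but generally not in $S_i$, so the objective is correctly summed over $B(S_i)$ and disjointness must be demanded of the $B(S_i)$ rather than of the $S_i$.
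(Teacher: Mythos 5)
Your proposal is correct and is essentially the paper's own argument: the paper notes that, by disjointness of the variable sets, the sum of the programs is a single program for which the true $\xi$ (with $\|\xi\|_\infty \leq \tfrac12$) is feasible, which is exactly your restriction-plus-summation argument spelled out vertex by vertex. No issues.
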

\begin{proof}
 Since the sets of variables are disjoint, the sum of the optimization programs can be considered to be a single optimization program, which is then satisfied by the optimizing solution $\xi$.  The corresponding values for $\xi$ are thus an upper bound on the optimum.
\end{proof}
Since the remaining programs $P, P', P_j, Q_j$ are relaxations of $Q$ and $Q'$, the additivity property holds for these as well.

A basic estimate for the value of $Q'$ is as follows.

\begin{lemma}\label{2_norm_opt_lemma}
 Let $G = (V,E)$ be a graph and let $v \in V$ of degree at least 2, with a single edge to each of its neighbors and no self-loops. Let $|\nu_v| = 1$.   The optimization problem $Q'(\{v\}, \nu)$ 
has value $\frac{1}{\deg(v)(\deg(v) + 1)}$.
\end{lemma}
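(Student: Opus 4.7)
The plan is to recognize $Q'(\{v\},\nu)$ as an equality-constrained convex quadratic program with box constraints that will turn out to be inactive, then solve it directly by Lagrange multipliers.

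First I would write out the program explicitly. Let $d=\deg(v)$ and let $w_1,\ldots,w_d$ be the neighbors of $v$. Since $v$ has no self-loop and a single edge to each neighbor, the ball $B(\{v\}) = \{v,w_1,\ldots,w_d\}$ has exactly $d+1$ elements, so the objective is $x_v^2 + \sum_{i=1}^d x_{w_i}^2$, and the single equality constraint is $d\,x_v - \sum_{i=1}^d x_{w_i} = \nu_v$, together with $|x_u|\leq\tfrac12$ on the $d+1$ variables. Since negating every variable maps a feasible point for $\nu_v=1$ to a feasible point for $\nu_v=-1$ with the same objective value, I may assume $\nu_v=1$ without loss of generality.

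Next I would drop the box constraints temporarily and solve the resulting equality-constrained convex quadratic program. The Lagrangian $\mathcal{L} = x_v^2 + \sum_i x_{w_i}^2 - \lambda\bigl(d\,x_v - \sum_i x_{w_i} - 1\bigr)$ has stationarity conditions $2x_v = \lambda d$ and $2x_{w_i} = -\lambda$ for each $i$. Substituting into the equality constraint gives $\lambda(d^2+d)/2 = 1$, so $\lambda = 2/(d(d+1))$, yielding the candidate optimum
\begin{equation*}
x_v = \frac{1}{d+1}, \qquad x_{w_i} = -\frac{1}{d(d+1)}.
\end{equation*}
By strict convexity of the objective on the affine subspace defined by the equality constraint, this is the unique global minimizer of the unconstrained-in-box problem.

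Finally I would verify that the box constraints are inactive and compute the value. Since $d\geq 2$, we have $|x_v| = 1/(d+1) \leq 1/3 < 1/2$ and $|x_{w_i}| = 1/(d(d+1)) \leq 1/6 < 1/2$, so the candidate lies in the interior of the box and is therefore the optimizer of $Q'(\{v\},\nu)$. Plugging in,
\begin{equation*}
x_v^2 + \sum_{i=1}^d x_{w_i}^2 = \frac{1}{(d+1)^2} + \frac{d}{d^2(d+1)^2} = \frac{d+1}{d(d+1)^2} = \frac{1}{d(d+1)}.
\end{equation*}
There is no genuine obstacle here: the whole argument is mechanical once one observes that the problem is a convex quadratic minimization on an affine subspace and that symmetry among the neighbors forces all $x_{w_i}$ to agree at the optimum, reducing the problem to two scalar unknowns. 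The only thing one must remember to check is the box feasibility of the Lagrange-multiplier solution, and that check uses only $d\geq 2$.
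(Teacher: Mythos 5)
Your proof is correct and follows essentially the same route as the paper: assume $\nu_v=1$ by symmetry, solve the equality-constrained quadratic by Lagrange multipliers, check the box constraints are slack, and plug in to get $\frac{1}{\deg(v)(\deg(v)+1)}$. The only cosmetic difference is that the paper dispatches the box constraints with the brief remark that the claimed value is below the boundary value, while you explicitly check $|x_v|\leq\frac13$ and $|x_{w_i}|\leq\frac16$, which is a slightly more direct way to reach the same conclusion.
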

\begin{proof}
Assume without loss of generality that $\nu_v = 1$. The constraint is $(\deg v) x_v - \sum_{(v,w) \in E} x_w = 1$ and the objective function is $x_v^2 + \sum_{(v,w) \in E} x_w^2$. Since the claimed value is smaller than the value on the boundary, it may be assumed that the optimum is achieved at an interior point.
By Lagrange multipliers, there is a scalar $\lambda$ such that  $x_v = \lambda \deg v$ and $x_w = -\lambda$ for all $(v,w) \in E$.  Thus $\lambda = \frac{1}{\deg(v)(\deg(v) +1)}$.  The claim follows, since \begin{equation}\sum_{d(v,w) \leq 1} x_w^2 = \lambda^2 \deg(v)(\deg(v)+1).\end{equation}
\end{proof}

In particular, combining this lemma with the additivity property above proves that the extremal prevector has a bounded $\ell^1$ norm.

The strategy of the arguments is now described as follows.  Say two points $x_i, x_t$ in the support of $\nu$ are  2-path connected, or just connected for short, if there is a sequence of points $x_i = x_0, x_1, ..., x_n = x_t$ in the support of $\nu$, such that the graph distance between $x_i$ and $x_{i+1}$ is at most 2. By the additivity lemma, the value of the optimization programs applied with $S_i$ separated connected components of $\supp \nu$ is additive.  Since the value of each optimization program is translation invariant and, for a fixed $\nu$, monotone in $S$, all connected components with $P$ or $Q$ (resp. $P', Q', P_j, Q_j$) value at most a fixed constant can be enumerated  by starting from a base configuration and adding connected points to the set $S$ one at a time.

The configuration $\nu$ must be in $C^\rho$ for $\xi \in \ell^2(\sT)$.  Having enumerated all feasible connected components, the search is completed by considering all methods of gluing together several connected components which produce a $\nu \in C^\rho$.  

\subsection{Issues of precision}
The techniques used in this section consist in the following:
minimization of a convex function in a convex bounded region, which can be certified by calculation of the derivative of the objective function at the optimum found, and integration of function with bounded derivatives over a bounded domain.  Although the integrals involving the characteristic function of a Green's function may have a singularity at 0, this may be removed in each case by switching to spherical coordinates of the correct dimension near the point of singularity.  Thus the numerical results are verifiable to within the claimed precision.  The generating code written in SciPy is available from the authors upon request.

\subsection{Proof of Theorems \ref{spectral_gap_calc_theorem} and \ref{D4_theorem}}
\subsubsection{Triangular lattice case}
Let the triangular lattice be generated by $v_1 = (1,0)$ and $v_2 = \left(\frac{1}{2}, \frac{\sqrt{3}}{2}\right)$.  Let $\xi^* = g* \nu^*$ with $\nu^* = \delta_{0} -\delta_{v_1} - \delta_{v_2} + \delta_{v_1+v_2}$.  The value 
\begin{equation}
 f(\xi^*) = 1.69416(5)
\end{equation}
was estimated by Lemma \ref{f_approx_lemma} with 
\begin{equation}
R = \{n_1 v_1 + n_2 v_2: \max(|n_1|, |n_2|) \leq 10\}.
\end{equation}
   It is to be shown that $\gamma_{\tri} = f(\xi^*)$.
\begin{figure}
 \begin{tikzpicture}[node distance = 1.3cm, auto, place/.style = {circle, 
 thick, draw=blue!75, fill=blue!20}]
  \draw (-1, .86602540378)--(2.5, .86602540378);
  \draw (-1,0)--(2.5,0);
  \draw (-1, -.86602540378)--(2.5,-.86602540378);
  \draw (-1, 1.73205080757)--(2.5, 1.73205080757);
  \draw (-0.5, -.86602540378)--(1, 1.73205080757);
  \draw (-1, 0)--(0, 1.73205080757);
  \draw (.5, -.86602540378)--(2, 1.73205080757);
  \draw (2.5, -.86602540378)--(1, 1.73205080757);
  \draw (1.5, -.86602540378)--(0, 1.73205080757);
  \draw (.5, -.86602540378)--(-1, 1.73205080757);
  \draw (-.5, -.86602540378)--(-1, 0);
   \draw (1.5, -.86602540378)--(2.5, .86602540378);
   \draw (2, 1.73205080757)--(2.5, .86602540378);
   \node[place] at (0,0)  (lower)[label=$1$]  {};
   \node[place] at (1,0)  (right lower) [label=$-1$]{};
   \node[place] at (.5,.86602540378)  (lower)[label=$-1$]  {};
   \node[place] at (1.5,.86602540378)  (right lower) [label=$1$]{};
 \end{tikzpicture}
\caption{The extremal configuration for the triangular lattice.}\label{fig:triangular_lattice_extremal}
 \end{figure}
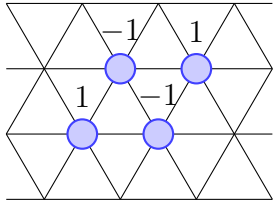
 
First the case of a node of height 3 in the extremal prevector is ruled out.  
 \begin{lemma}
  Suppose $|\nu_0| = 3$.  The optimization program $P(\{0\}, |\nu|)$ has value 2. In particular, $\nu$ does not achieve $\gamma_{\tri}$.
 \end{lemma}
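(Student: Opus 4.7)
The plan is to prove $P(\{0\}, 3) = 2$; combined with the chain $P(\{0\}, |\nu|) \leq Q(\{0\}, \nu) \leq f(g*\nu)$ from the relaxation and additivity lemmas above, and the observation that we may take $\nu_0 = 3$ by evenness of $c$, this gives $f(\xi) \geq 2 > 1.69416\ldots = \gamma_{\tri}$, ruling out extremality of any $\nu$ with $|\nu_0| = 3$. The program has seven variables $x_0, x_{w_1}, \ldots, x_{w_6} \in [-1/2, 1/2]$ and the single constraint $6 x_0 + \sum_{j=1}^6 x_{w_j} \geq 3$. The feasible point $(x_0, x_{w_j}) = (1/2, 0)$ saturates the constraint and has objective $1 - c(1/2) = 2$, so $P(\{0\}, 3) \leq 2$; the heart of the argument is the matching lower bound.

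First I would observe that at any minimizer the constraint is tight, since otherwise any variable can be pulled toward $0$, strictly decreasing its contribution to the objective while preserving feasibility. I would then split into boundary and interior minimizers. Any boundary minimizer has at least one variable at $\pm 1/2$, contributing $1 - c(1/2) = 2$ to the objective, so the objective is $\geq 2$ automatically; equality requires all other variables to vanish, and a direct check of the tight constraint shows that the only feasible configuration of this form is the candidate itself (the case $x_0 = -1/2$ with the rest zero is infeasible, and a single $x_{w_j} = \pm 1/2$ with the rest zero forces $x_0 \notin \{0, \pm 1/2\}$, giving strict inequality).

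For interior minimizers (all variables in $(-1/2, 1/2)$), Lagrange multipliers on the tight constraint yield $\sin(2\pi x_0) = 6 \sin(2\pi x_{w_j})$ for every $j$, so the six neighbor values share a common sine and hence lie in a two-element set $\{a, 1/2 - a\}$ for some $a \in (0, 1/4)$. Let $k \in \{0, 1, \ldots, 6\}$ count those equal to $1/2 - a$; the constraint determines $x_0 = 1/2 - k/12 - (1 - k/3)a$, and the identity $1 - c(1/2 - a) = 1 + c(a)$ collapses the objective to
\[
F_k(x_0, a) \;=\; (1 - c(x_0)) + 6 + (2k - 6)\, c(a).
\]
Each $k$ is then dispatched in a single line. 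For $k \in \{1, 2, 3\}$, the coefficient $2k - 6 \leq 0$ together with $c(a) < 1$ (since $a > 0$) gives $F_k > 2k \geq 2$. For $k \in \{4, 5\}$, the positive coefficient and the fact that $c(a) > 0$ on $(0, 1/4)$ give $F_k > 6$. For the all-equal cases $k \in \{0, 6\}$, the Lagrange condition reduces to $\sin(2\pi a) = 0$, so $a \in \{0, 1/2\}$, a degenerate value excluded from the interior analysis.

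Combining the boundary analysis (yielding $\geq 2$ with equality only at the candidate) with the interior analysis (yielding $> 2$ strictly) gives $P(\{0\}, 3) = 2$, proving the lemma. The main subtlety is the exhaustiveness of the interior two-value case analysis; fortunately the trigonometric identity $1 - c(1/2 - a) = 1 + c(a)$ converts the problem into a one-parameter family indexed by the integer $k$, so each of the five genuine sub-cases reduces to an elementary bound on $c(a) \in (0, 1)$, and no delicate numerical estimate is required.
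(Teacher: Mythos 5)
Your proof is correct, modulo a few small imprecisions of exposition, and it takes a genuinely different route from the paper's. The paper first argues that at the optimum $x_0$ is the largest variable (a swap would slacken the constraint without changing the symmetric objective), deduces from the tight constraint that $x_0\ge\frac14$ and hence $|x_w|\le\frac14$ for the neighbors, invokes convexity of $1-c$ on $[-\frac14,\frac14]$ together with the symmetry of the constraint to conclude all neighbor variables are equal, and finishes with the one-variable problem $1-c(x)+6\bigl(1-c(\tfrac12-x)\bigr)=7+5c(x)$, minimized at $x=\tfrac12$ with value $2$. You instead split into boundary-of-cube and interior cases: on the boundary a single variable at $\pm\tfrac12$ already contributes $2$, and a feasibility check pins down the unique boundary minimizer; in the interior, the KKT condition $\sin(2\pi x_0)=6\sin(2\pi x_{w_j})$ forces the neighbor values into a two-element set $\{a,\tfrac12-a\}$, and the identity $1-c(\tfrac12-a)=1+c(a)$ collapses the objective to $F_k=(1-c(x_0))+6+(2k-6)c(a)$, which is disposed of case by case in $k$. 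Your approach avoids the swap/convexity reduction entirely and is a bit longer, but it is self-contained and localizes the argument at the critical points rather than relying on a symmetry-implies-equal-variables step. A couple of places could be tightened: for $k=3$ the coefficient $2k-6=0$ so the strict bound $c(a)<1$ gives nothing, but the constraint then forces $x_0=\tfrac14$ and $F_3=7>2$ directly; and the parenthetical about $x_{w_j}=\pm\tfrac12$ with the rest zero is more cleanly stated as a simple feasibility failure. Neither affects the validity of the argument.
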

\begin{proof}
 At the optimum, the largest value is $x_0$, since if $x_w$ is larger for some $w$ with $d(w,0)=1$ then the constraint may be improved by swapping $x_0$ and $x_w$.  It follows that $|x_w| \leq \frac{1}{4}$ for $w \neq 0$ since otherwise the claimed bound would be exceeded.  For a fixed $x_0$, the conditioned optimization problem is now convex with a unique local minimum, which by symmetry occurs with all variables equal.  This reduces to minimizing $1-c(x) + 6 \left(1 - c\left(\frac{1}{2}-x\right)\right)$ for $0 \leq x \leq \frac{1}{2}$, which has minimum 2.
 
\end{proof}

 Next the possibility of a prevector with node of height at least 2 is ruled out. 
 \begin{lemma}\label{singleton_lemma}
  If $|\nu_0| = 2$, $P(\{0\}, |\nu|) \geq 1.4322$.  If $|\nu_0| = 1$, $P(\{0\}, |\nu|) \geq 0.44256$.
 \end{lemma}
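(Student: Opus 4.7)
The plan is to mirror the strategy of the preceding lemma (the $|\nu_0|=3$ case): exploit the full six-fold symmetry of the neighborhood of $0$ in the triangular lattice to reduce $P(\{0\}, |\nu|)$ to a one-variable minimization problem, and then solve it.

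First, since the single inequality constraint $6 x_0 + \sum_{j=1}^{6} x_{w_j} \geq |\nu_0|$ must be tight at any optimum, I would fix $x_0$ and treat the six neighbors as a subproblem. Because the objective is symmetric in the neighbor variables and $c(\cdot)$ is concave on $[-\tfrac14,\tfrac14]$, Jensen's inequality shows that \emph{in the regime where $|x_{w_j}| \leq \tfrac14$}, the minimum of $\sum_j (1-c(x_{w_j}))$ subject to prescribed sum is achieved at the symmetric point $x_{w_j} = b := (|\nu_0| - 6x_0)/6$.

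Second, I would rule out configurations containing a neighbor with $|x_{w_j}| > \tfrac14$. Since $1 - c(\tfrac14) = 1$, this is immediate in the $|\nu_0|=1$ case as $1 > 0.44256$. In the $|\nu_0|=2$ case, suppose without loss of generality $x_{w_1} > \tfrac14$ while the remaining five neighbors are taken equal to some $b \in [-\tfrac14,\tfrac14]$. The Lagrangian stationarity conditions, combined with the tight constraint, force $x_{w_1} = \tfrac12 - b$ and pin $x_0$ near $\tfrac14$, which makes the joint contribution of $x_0$ and $x_{w_1}$ alone exceed $1.4322$; the finitely many subcases with several large neighbors are handled analogously by the same comparison.

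Third, the problem reduces to minimizing the one-variable function
\begin{equation*}
 g(x_0) = \bigl(1 - c(x_0)\bigr) + 6\bigl(1 - c(|\nu_0|/6 - x_0)\bigr)
\end{equation*}
over the interval of $x_0$ for which $b \in [-\tfrac14,\tfrac14]$. Setting $g'(x_0) = 0$ yields the stationarity condition $\sin(2\pi x_0) = 6 \sin(2\pi b)$, which together with $x_0 + b = |\nu_0|/6$ has a unique interior solution by the monotonicity of $\sin$ on $[0,\tfrac12]$. Evaluating $g$ at this stationary point (to the precision justified by the bounded Hessian argument in the introduction to this section) gives the claimed lower bounds.

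The main obstacle is the second step, namely carefully ruling out asymmetric near-boundary optima in the $|\nu_0|=2$ case, but this reduces to a short case analysis using only monotonicity and the value $1 - c(\tfrac14) = 1$. The remaining steps are a straightforward one-dimensional convex calculation.
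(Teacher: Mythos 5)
The paper's own proof of this lemma is a single line: ``These values were verified in SciPy.'' Your proof, by contrast, is an analytic derivation that mirrors the strategy the paper uses for the \emph{preceding} lemma (the $|\nu_0|=3$ case): tightness of the constraint, symmetrization of the neighbor variables via convexity of $1-c$ on $[-\tfrac14,\tfrac14]$, exclusion of boundary values, and a final one-variable calculus problem in $x_0$ with $b=|\nu_0|/6-x_0$. This is a genuinely different route from what the paper does here, and it buys a human-readable certificate: solving $\sin(2\pi x_0)=6\sin(2\pi b)$ with $x_0+b=|\nu_0|/6$ gives $\tan(2\pi b)=\sqrt{3}/13$ (resp.\ $\sqrt{3}/11$) and evaluating $g$ there yields $\approx 0.4426$ (resp.\ $\approx 1.432$), matching the claimed bounds; the paper gets the same numbers from a black-box solver. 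One place your argument can be streamlined: for ruling out a neighbor with $|x_{w_j}|>\tfrac14$ in the $|\nu_0|=2$ case, you invoke Lagrange stationarity and the pairing $x_{w_1}=\tfrac12-b$, which works but is roundabout. The paper's own observation in the $|\nu_0|=3$ proof --- that at the optimum $x_0$ must be the largest variable, since swapping $x_0$ with a larger neighbor strictly improves the constraint --- applies verbatim and gives this immediately: if some $x_{w_j}>\tfrac14$ then $x_0\geq x_{w_j}>\tfrac14$, so the two together contribute more than $2>1.4322$, with no Lagrange analysis needed. With that substitution your sketch is a clean analytic alternative to the paper's numerical certification.
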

\begin{proof}
 These values were verified in SciPy.
\end{proof}

 It follows that if the minimizing prevector has a node of height 2, it does not have any non-zero node at distance greater than 2 from the node of height 2, since otherwise the two optimization problems could be applied separately at the two nodes, and the total value would exceed $\gamma_{\tri}$.
 
 Up to rotation, there are two types of nodes at graph distance 2 from 0 in $\sT$, $v_1 + v_2$ and $2v_1$.  A non-zero node at distance two is ruled out by considering the following optimization problems.
 
 \begin{lemma}
  Suppose $|\nu_0| = 2$ and $|\nu_{v_1 + v_2}| = 1$.  Then $P(\{0, v_1 + v_2\}, |\nu|) \geq 1.83$.  If $|\nu_0| = 2$ and $|\nu_{2v_1}| = 1$ then $P(\{0, 2v_1\}, |\nu|) \geq 1.85$.
 \end{lemma}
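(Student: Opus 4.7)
The plan is to reduce this two-point lemma to the same kind of constrained convex optimization that was used for the singleton bounds in Lemma \ref{singleton_lemma}, and then certify the stated numerical lower bounds.

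First I would write out the variable set explicitly. For $S = \{0, v_1 + v_2\}$, the distance-$1$ enlargement $B(S)$ consists of $0$, $v_1+v_2$, and the six neighbors of each; since $0$ and $v_1+v_2$ are at graph distance $2$ and share the common neighbors $v_1$ and $v_2$, this gives $2 + 6 + 6 - 2 = 12$ variables. For $S = \{0, 2v_1\}$, the points share only the common neighbor $v_1$, giving $13$ variables. I would set up the two linear constraints corresponding to $(\deg u)x_u + \sum_{d(w,u)=1} x_w \geq |\nu_u|$ at $u = 0$ and at the second element of $S$, together with the box constraints $-\tfrac12 \leq x_w \leq \tfrac12$.

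Next I would exploit the reflection symmetry of each configuration to shrink the program. In the first case the extended configuration is symmetric under reflection across the line through $0$ and $v_1 + v_2$, and in the second case across the line through $0$ and $2v_1$; this identifies variables in pairs and cuts the dimension roughly in half. The objective $\sum_{w \in B(S)} 1 - c(x_w)$ is not globally convex, but following the recipe described before the lemma I would partition each interval $[-\tfrac12,-\tfrac14]$ and $[\tfrac14,\tfrac12]$ into $j$ equal subintervals, replace $1 - c(x_w)$ on each subinterval by the chord connecting the endpoint values (a valid lower bound since $1-c$ is concave there), and leave the genuinely convex piece on $[-\tfrac14,\tfrac14]$ intact. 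The resulting $P_j(S,|\nu|)$ is, on each product of subintervals assigned to the variables, a convex program with linear constraints whose minimum is attained either at a unique interior critical point (found by Lagrange multipliers) or on the boundary of that product. I would then take the minimum over all product assignments, and over a sufficiently fine $j$, and confirm the values $\geq 1.83$ and $\geq 1.85$ using SciPy, as was done for the singleton cases.

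The main obstacle I anticipate is the combinatorial blow-up: with $12$--$13$ variables and a piecewise partition, the number of product assignments is exponentially large. The symmetry reduction, together with the monotonicity property (extra $\pm$-sign choices can only raise the value since we are in the relaxed program $P$ with constraints using $|\nu|$), should cut this substantially, and most product assignments with many variables forced into $|x_w| \in [\tfrac14,\tfrac12]$ already contribute more than $2$ to the objective just from those terms and can be discarded by a simple a priori bound. What must be verified carefully is the certification of the optimum: at each claimed minimizer I would check the KKT conditions and the sign of the gradient on each active box face, since the claimed lower bounds $1.83$ and $1.85$ are close enough to $\gamma_{\tri} \approx 1.69$ that a sloppy relaxation would fail to rule out the two-node configurations. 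Once these bounds are certified, combining them with the additivity lemma yields that no prevector with a node of height $2$ and another non-zero node at graph distance $2$ can achieve $\gamma_{\tri}$, completing this step of the elimination argument.
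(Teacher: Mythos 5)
Your proposal matches the paper's approach exactly: the paper's proof is the single line ``These values were verified in SciPy,'' relying on the methodology (box-constrained relaxed program $P$, piecewise-linear lower bound on $1-\cos(2\pi x)$ outside $[-\tfrac14,\tfrac14]$, case split over subinterval assignments) that is laid out in the text preceding the lemma, and your write-up is a faithful and correct expansion of that recipe, including the correct counts of $12$ and $13$ variables for the two distance-two configurations. Nothing is missing; you have simply spelled out what the paper's one-line appeal to SciPy presupposes.
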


 \begin{proof}
  These values were verified in SciPy.
 \end{proof}
 Note that $P(S, |\nu|)$ is increasing in $|\nu|$.
The above lemmas prove that if the optimizing prevector $\nu$ has a node of height 2, then any non-zero node in $\nu$ is adjacent to the node of height 2. After translation and multiplying by $\pm 1$, assume $\nu_0 = 2$.   The case in which all six neighbors of 0 are non-zero is ruled out as follows.
\begin{lemma}
 Let $|\nu_0| = 2$ and $|\nu_w| \geq 1$ for each $w$ with $d(w,0) = 1$.  Let $S = \{w: d(w,0) \leq 1\}$.  Then $P(S, |\nu|) \geq 1.9233$.
\end{lemma}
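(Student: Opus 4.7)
The plan mirrors the approach of the preceding lemmas, reducing $P(S, |\nu|)$ to a small convex program and certifying the lower bound numerically. First, by monotonicity of $P$ in $|\nu|$, it suffices to verify the bound in the extremal case $|\nu_0| = 2$ and $|\nu_w| = 1$ for each of the six nearest neighbors $w$ of $0$. This reduced constraint system enjoys the full dihedral symmetry $D_6$ of the triangular lattice about $0$.

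Next, exploit symmetry to collapse the nineteen-variable program on $B(S)$ to a four-variable one. The vertex set $B(S)$ decomposes into four $D_6$-orbits: the singleton $\{0\}$, the six nearest neighbors of $0$, the six ``far'' distance-$2$ vertices at Euclidean distance $2$ from $0$ (e.g.\ $2v_1$), and the six ``middle'' distance-$2$ vertices at Euclidean distance $\sqrt{3}$ (e.g.\ $v_1 + v_2$). The symmetric feasible point $(x_0, x_{\text{near}}, x_{\text{far}}, x_{\text{mid}}) = (1/6, 1/6, 0, 0)$ already gives the upper bound $7(1-\cos(\pi/3)) = 3.5$ on the minimum, which forces the minimizer strictly inside the convex regime $|x_u| < 1/4$; there $1 - \cos(2\pi x)$ is strictly convex, so the minimizer is unique and hence $D_6$-invariant. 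Parametrizing by $(a,b,c,d)$, the seven original constraints collapse to
\begin{equation*}
 6a + 6b \geq 2, \qquad a + 8b + c + 2d \geq 1,
\end{equation*}
the second coming from $w_1 = v_1$ whose six neighbors $0, 2v_1, v_1+v_2, v_1-v_2, 2v_1-v_2, v_2$ fall in the orbits $a, c, d, b, d, b$ respectively. The reduced objective is $(1-\cos(2\pi a)) + 6(1-\cos(2\pi b)) + 6(1-\cos(2\pi c)) + 6(1-\cos(2\pi d))$.

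Finally, solve this four-variable convex program in SciPy, using the piecewise linear underestimate $P_j$ with $j$ large enough to certify the bound $\geq 1.9233$ rigorously; the KKT stationarity relations (in particular $\sin(2\pi d) = 2\sin(2\pi c)$, together with analogous proportionalities relating $a, b$ to the dual multipliers) provide an independent check on the numerical optimum. The main obstacle is that the bound is numerically close to the true value: a quadratic Taylor approximation to the objective already places the minimum near $1.928$, so $j$ must be chosen fine enough that the piecewise linear relaxation certifies the inequality without slack. Once $j$ is large enough, the additivity and monotonicity lemmas combine with the resulting bound to rule out this configuration and complete the analysis of prevectors with a node of height $2$.
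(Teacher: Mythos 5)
Your symmetry reduction contains a genuine gap that invalidates the argument as written.

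The problem is the step where you assert that the feasible point $(1/6,1/6,0,0)$, which gives objective value $3.5$, ``forces the minimizer strictly inside the convex regime $|x_u| < 1/4$.'' This implication is false: a single variable at $|x_u| = 1/2$ contributes only $1 - \cos(\pi) = 2 < 3.5$ to the objective, so an upper bound of $3.5$ places no constraint whatsoever on whether individual variables exit $[-1/4,1/4]$. Without strict convexity on the full feasible region, the set of minimizers of $P(S,|\nu|)$ need not be convex, and a $D_6$-invariant problem can easily have only non-invariant minimizers (consider minimizing $(x^2-1)^2 + (y^2-1)^2$). This matters crucially for the direction of the bound: adding the $D_6$-invariance constraints $x_{v_1} = x_{v_2} = \cdots$ to pass from the $19$-variable program to your $4$-variable program can only \emph{raise} the optimum. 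Since the lemma requires a \emph{lower} bound on $P(S,|\nu|)$, a reduction that could overestimate the minimum is not admissible unless you first prove some minimizer is $D_6$-invariant — which is exactly the unjustified step. In fact your situation is delicate: a quadratic KKT analysis of the reduced program puts the optimal $x_0$ near $0.24$, i.e.\ right at the edge of the convexity region, so ``the minimizer lies in $(-1/4,1/4)$'' is not something you can take for granted even heuristically.

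The paper sidesteps all of this by running the (relaxed, piecewise-linearized) program $P(S,|\nu|)$ directly on all $19$ variables, which automatically yields a lower bound regardless of where the minimizer sits. To salvage your reduction, you would need a correct argument for the existence of a symmetric minimizer, for instance: observe first (by the even, monotone-in-$|x|$ structure of the objective) that there is a minimizer with all $x_w \geq 0$; then show that $x_0$ is the largest coordinate (swapping $x_0$ with a larger neighbor improves the constraint at $0$) and hence that the remaining coordinates are bounded away from $1/2$; then use Lagrange multipliers to conclude, as in the honeycomb Lemma~\ref{height_2_lemma}, that certain groups of variables must be equal or reflected about $1/4$. That route would let you legitimately collapse to a few variables. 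As stated, though, the argument does not yield a lower bound, and the final SciPy check is certifying the wrong program.
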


\begin{proof}
 This was verified in SciPy.
\end{proof} 

Similarly, there are not two adjacent nodes of height 2, as the following lemma verifies.
\begin{lemma}
 Suppose that $|\nu_0| = 2$ and $|\nu_{v_1}| = 2$.   Then $P(\{0, v_1\}, |\nu|) \geq 2.3.$
\end{lemma}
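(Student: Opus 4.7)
The plan is to establish the bound by solving the convex relaxation $P(\{0,v_1\},|\nu|)$, following the computational strategy of the preceding sub-lemmas. The distance-1 enlargement $B(\{0,v_1\})$ consists of $10$ vertices in the triangular lattice, and the two active constraints read
\begin{equation*}
6 x_0 + \sum_{w \sim 0} x_w \geq 2, \qquad 6 x_{v_1} + \sum_{w \sim v_1} x_w \geq 2,
\end{equation*}
together with box constraints $|x_w| \leq \tfrac12$.

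First I would exploit the reflective symmetry in the perpendicular bisector of the edge $(0,v_1)$, which swaps $0 \leftrightarrow v_1$ and pairs the eight outer vertices into four swapped pairs together with two fixed vertices $v_2$ and $v_1 - v_2$. Since $1-c$ is strictly convex on the central region $|x| \leq \tfrac14$, any optimum lying interior to such a region must respect this symmetry, reducing the number of free variables from $10$ to $6$. On this symmetric subspace the problem is a small convex program with linear constraints whose interior minimum is characterized by the Karush--Kuhn--Tucker conditions. For cells that touch the outer region $|x_w| \geq \tfrac14$ one applies the piecewise-linear subdivision scheme described just before the sub-lemma: each interval $\left[\tfrac14, \tfrac12\right]$ and $\left[-\tfrac12, -\tfrac14\right]$ is split into a small number of equal pieces on which $1-c$ is replaced by its linear under-approximation, and the resulting convex program is solved on each product of pieces.

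The numerical enumeration is then carried out in SciPy, exactly as for the earlier lemmas in this subsection, and the lower bound $2.3$ is read off. The main obstacle is purely computational: the number of cells grows exponentially in $|B|$. However, the monotonicity of $P$ in $S$ gives the single-node baseline $P(\{0\},|\nu|) \geq 1.4322$ from Lemma \ref{singleton_lemma}, so the additional contribution from the constraint at $v_1$ need only be shown to push the value past $2.3$; combined with the $0 \leftrightarrow v_1$ symmetry, which essentially halves the effective enumeration, this keeps the verification tractable.
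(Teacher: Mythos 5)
Your proposal matches the paper's approach: the paper's proof is simply ``This was verified in SciPy,'' i.e.\ a direct numerical solution of the relaxed convex program $P(\{0,v_1\},|\nu|)$, and you are describing how to carry out that same computation. The reflection-symmetrization across the perpendicular bisector of $(0,v_1)$ is a legitimate and useful simplification (symmetrizing a feasible point keeps it feasible and, by convexity of $1-c$ on $[-\tfrac14,\tfrac14]$, does not increase the objective), though it is not something the paper spells out; be aware that your closing remark about the ``additional contribution from the constraint at $v_1$'' is not a sound additivity argument, since the distance-1 neighborhoods of $0$ and $v_1$ overlap, and monotonicity only gives $P(\{0,v_1\},|\nu|)\geq P(\{0\},|\nu|)\geq 1.4322$ rather than any additive improvement.
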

\begin{proof}
 This was verified in SciPy.
\end{proof}
Since it is necessary that $\nu \in C^2(\sT)$ for $\xi \in \ell^2(\sT)$, the remaining possible configurations have an even number of non-zero nodes adjacent to 0.  There must be at least 2, and when there are two, the configuration is, up to rotation,   $\nu = -\delta_{-v_1} + 2\delta_0 - \delta_{v_1}$ which has $f(\xi) \geq 2.23$.  No configuration with four non-zero nodes is in $C^2(\sT)$.  This concludes the proof that there is not a node of height 2.

Next decompose the support of $\nu$ into 2-path connected components.  The next stage in the argument reduces to the case of a single connected component.  If there were four or more connected components, Lemma \ref{singleton_lemma} could be applied at a node in each connected component,  which obtains a value at least $4 \times 0.44256> 1.76$.  Hence there are at most 3 connected components, and since $\nu \in C^2(\sT)$, one must contain more than one node. 

\begin{lemma}\label{doubles_lemma}
 If $|\nu_0| =1$ and $|\nu_{v_1}| = 1$ then 
 \begin{equation}\label{adjacent_nodes}
P(\{0, v_1\}, |\nu|) \geq 0.6729.
 \end{equation}
 If $|\nu_0| = 1$ and $|\nu_{v_1 + v_2}| = 1$ then 
 \begin{equation}\label{2_adjacent}
P(\{0, v_1 + v_2\}, |\nu|) \geq 0.8509.  
 \end{equation}
 If $|\nu_0| = 1$ and $|\nu_{2v_1}| = 1$ then 
 \begin{equation}\label{2_adjacent_2}
P(\{0, 2v_1\}, |\nu|) \geq 0.8677.
 \end{equation}
\end{lemma}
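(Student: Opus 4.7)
The plan is to verify the three bounds in the same computational style as the preceding lemmas in this subsection, that is, by setting up each relaxed program $P(S,|\nu|)$ explicitly, reducing it by symmetry, and solving it numerically via the piecewise-linear convex programming scheme described at the start of the section.

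First I would write down the variables and constraints for each of the three cases. For \eqref{adjacent_nodes}, take $S = \{0, v_1\}$; the distance-1 enlargement $B(S)$ consists of $0, v_1$ together with the eight distinct neighbors $\{-v_1,\pm v_2,v_1-v_2, v_2-v_1, 2v_1,v_1+v_2,2v_1-v_2\}$, giving ten variables $x_w$. The two linear constraints are
\begin{align*}
6x_0 - x_{v_1} - x_{-v_1} - x_{v_2} - x_{-v_2} - x_{v_1-v_2} - x_{v_2-v_1} &\geq 1,\\
6x_{v_1} - x_0 - x_{2v_1} - x_{v_1+v_2} - x_{v_1-v_2} - x_{v_2} - x_{2v_1-v_2} &\geq 1,
\end{align*}
together with the box constraints $-\tfrac12 \leq x_w \leq \tfrac12$. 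For \eqref{2_adjacent} and \eqref{2_adjacent_2} the setup is analogous with $S=\{0,v_1+v_2\}$ and $S=\{0,2v_1\}$ respectively; in each of those cases the two distance-1 neighborhoods are disjoint, so $B(S)$ has $2+12=14$ variables and the two constraints involve disjoint variable sets.

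Next I would exploit the reflection symmetry of each configuration (through the line spanned by the two prescribed nodes) to identify pairs of variables, reducing each program to a small convex problem on a bounded box. On the region $\{|x_w|\leq 1/4\}$ the objective $\sum_{w \in B(S)}(1-c(x_w))$ is strictly convex with non-degenerate Hessian, so a unique interior minimum exists and can be certified by checking KKT conditions. On the complementary boxes $\{1/4 \leq |x_w| \leq 1/2\}$ I would apply the piecewise-linear lower approximation $P_j$ introduced earlier, splitting each of $[-1/2,-1/4]$ and $[1/4,1/2]$ into $j$ equal subintervals for some modest $j$, and taking the minimum over the $O(j^{|B(S)|})$ product boxes.

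The actual numerical work would be done in SciPy, using its constrained convex minimizer to handle the interior region and an LP solver for each piecewise-linear face, as in the surrounding lemmas. The main obstacle is combinatorial: the product of subregions grows exponentially in $|B(S)|$, so for the $14$-variable cases \eqref{2_adjacent} and \eqref{2_adjacent_2} one must be careful to prune aggressively, either by noting that on most faces the linear constraint together with the box constraints forces a large value of the objective, or by invoking the monotonicity property in $|\nu|$ and the additivity property to handle the two disjoint neighborhoods independently before recombining. The resulting lower bounds $0.6729$, $0.8509$ and $0.8677$ are then obtained as the minima of these finitely many small convex or linear subprograms.
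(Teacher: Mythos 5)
Your overall plan -- set up the relaxed programs explicitly and solve them numerically in SciPy, using the piecewise-linear convex scheme described earlier in the section -- is exactly what the paper does; the paper's stated proof is simply ``These were verified in SciPy.'' However, two details of your setup are wrong and would lead to different programs than the ones the paper solves.

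First, your assertion that for $S=\{0,v_1+v_2\}$ and $S=\{0,2v_1\}$ ``the two distance-1 neighborhoods are disjoint, so $B(S)$ has $2+12=14$ variables and the two constraints involve disjoint variable sets'' is false. On the triangular lattice, the neighbors of $0$ are $\{\pm v_1,\pm v_2,\pm(v_1-v_2)\}$ and the neighbors of $v_1+v_2$ are $\{v_1,v_2,2v_1,2v_2,2v_1+v_2,v_1+2v_2\}$, so they share $\{v_1,v_2\}$; the neighbors of $2v_1$ are $\{v_1,3v_1,v_1+v_2,2v_1\pm v_2,3v_1-v_2\}$, so they share $\{v_1\}$ with those of $0$. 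Thus $|B(S)|$ is $12$ and $13$, respectively, not $14$, and the two constraints do couple. Had the neighborhoods been disjoint, the additivity lemma would immediately give a value of $2\times 0.44256=0.88512$ in both cases, strictly larger than the stated bounds $0.8509$ and $0.8677$; the coupling through the shared variables is precisely what makes those values smaller.

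Second, you wrote the constraints in the Laplacian form $(\deg u)x_u - \sum_{d(w,u)=1}x_w\ge \nu_u$, but the paper's relaxation $P(S,\nu)$ uses $(\deg u)x_u + \sum_{d(w,u)=1}x_w\ge \nu_u$. The plus sign is essential: it is what makes $x_w=|\xi_w|$ feasible via the triangle inequality for any choice of signs of $\nu$, so that $P(S,|\nu|)$ is a valid lower bound for $f(\xi)$ whenever $|\nu_0|=|\nu_{v_1}|=1$. With your minus convention the original $\xi$ is feasible only when $\nu_0$ and $\nu_{v_1}$ have the same sign (after possibly negating $\xi$ globally), so your program does not bound the mixed-sign cases that the lemma must also cover. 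Since the constraints couple in all three cases, the sign discrepancy changes the optimal value and cannot be absorbed by per-variable sign flips.

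The remaining ingredients -- reducing by the reflection symmetry through the axis spanned by the two constrained nodes, using interior strict convexity on the region $|x_w|\le 1/4$, and applying $P_j$ on the outer boxes -- are consistent with the paper's method and would, once the two errors above are corrected, reproduce the claimed values.
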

\begin{proof}
 These were verified in SciPy.
\end{proof}
It follows that if there are 3 connected components then the only possibility is that one has diameter 1 as in (\ref{adjacent_nodes}) and the other two are singletons, since otherwise the sum of the values of the programs exceeds $\gamma$. To remain in $C^2(\sT)$, the configuration of diameter 1 has two nodes since the total number of nodes is even. 
\begin{lemma}\label{singleton_space_lemma}
 Let $\nu_0 = 1$ and $\nu_w = 0$ for $w$ such that $d(w,0) = 1$.  Let $S = \{w : d(w,0)\leq 1\}$.  Then $Q(\nu, S) \geq 0.9127$. 
\end{lemma}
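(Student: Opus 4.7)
The plan is to reduce the verification to a small convex optimization problem by exploiting the dihedral symmetry of $\nu$ and the constraint set. The program $Q(\nu, S)$ has nineteen variables $x_w$ indexed by the ball $B(S)$, which consists of $0$, its six neighbors, and the twelve vertices at graph distance $2$, subject to seven linear equality constraints, one at each $u \in S$.

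First, I would observe that any coordinate with $|x_w| \geq \tfrac14$ alone contributes $1 - c(x_w) \geq 1$ to the objective, already exceeding $0.9127$; hence any minimizer competing with the claimed bound lies in the open sub-box $\prod_{w \in B(S)} (-\tfrac14, \tfrac14)$, on which $\sum (1 - c(x_w))$ is strictly convex. The dihedral group $D_6$ of order $12$ fixing $0$ permutes $B(S)$ while preserving both the objective and the affine constraint set, so by convexity any minimum may be $D_6$-symmetrized without increasing its value, and it suffices to minimize over $D_6$-invariant solutions.

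Second, the four $D_6$-orbits on $B(S)$ are $\{0\}$, the six neighbors of $0$, the six distance-$2$ vertices at Euclidean distance $2$, and the six at Euclidean distance $\sqrt{3}$. Parameterizing a symmetric solution by $a := x_0$, the common value $b$ at the six neighbors, and the common values $\alpha, \beta$ on the two distance-$2$ orbits, the seven equality constraints collapse to the two independent relations $a = b + \tfrac16$ and $3b - \alpha - 2\beta = \tfrac16$. The residual strictly convex minimization in $(\alpha, \beta)$ has an interior KKT point at which the two gradient equations, together with the substituted linear relations, determine the optimum uniquely; numerically the optimum sits near $0.93$, comfortably above $0.9127$.

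The main obstacle is certifying the numerical minimum to the required tolerance: this is handled as in the preceding lemmas by solving the two-parameter convex system in SciPy, verifying the KKT residual and that the critical point lies strictly inside the central sub-box, and controlling the Taylor remainder of $1 - c(x)$ via Lemma~\ref{2_norm_lemma}. Once these two verifications are in place, the symmetrization argument upgrades the certified minimum over symmetric configurations to a certified minimum over all configurations, yielding $Q(\nu, S) \geq 0.9127$.
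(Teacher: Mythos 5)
Your proposal is correct but differs from the paper's route. The paper treats this lemma exactly as it treats all the other SciPy lemmas: it solves the full $19$-variable, $7$-constraint program $Q(S,\nu)$ directly, handling the non-convex region $\frac14 \le |x_w| \le \frac12$ by piecewise-linear lower bounds (the programs $Q_j$) and certifying the interior minimum via the derivative at the optimum, as described in the ``Issues of precision'' subsection. You instead cut out the non-convex region by the simple observation that $|x_w| \ge \frac14$ already costs $1 - c(x_w) \ge 1 > 0.9127$, and then exploit the fact that this particular $\nu$ (a point mass at the center of the ball) is $D_6$-invariant to symmetrize and collapse the problem to a $2$-parameter strictly convex minimization. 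Both steps are sound: the threshold $1 > 0.9127$ makes the sub-box restriction valid here, the constraint set is affine and $D_6$-invariant so Jensen applies, and I can confirm the orbit count ($1+6+6+6 = 19$) and the reduced constraints $a = b + \frac16$, $3b - \alpha - 2\beta = \frac16$ by direct computation of the Laplacian at $0$ and at a neighbor. What your approach buys is a far smaller certified computation for this lemma; what the paper's approach buys is uniformity across all the lemmas in the section, most of which involve $\nu$'s without useful point symmetry and $Q$-values at or above $1$ where the sub-box trick would not apply. One small caution: your remark that the optimum ``sits near $0.93$'' is a bit loose — a quadratic-approximation estimate puts the true minimum only slightly above $0.9127$, so the final numerical certification needs to be done carefully rather than waved at.
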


\begin{proof}
 This was verified in SciPy.
\end{proof}
If there were an optimal configuration with 3 connected components, then the component with two adjacent nodes must have both nodes of equal sign for the configuration to be in $C^2(\sT)$. Thus the two singletons would be placed symmetrically opposite the center of the configuration of size 2 and have the same sign.  Since they are disconnected, they have distance at least 3 from the component of size 2.  It follows that Lemma \ref{singleton_space_lemma} can be applied at each singleton so that the value exceeds $\gamma_{\tri}$.  This eliminates the case of 3 connected components.

Next suppose that there are two connected components.  By applying (\ref{2_adjacent}) and (\ref{2_adjacent_2}) it follows that at least one of the connected components has diameter at most 1.
\begin{lemma}
 Suppose $\nu_0 = \nu_{v_1} = 1$. Then $Q(\{0, v_1\}, \nu) \geq 1.1518$.
\end{lemma}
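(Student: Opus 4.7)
The plan is to set up the constrained program $Q(\{0,v_1\},\nu)$ explicitly, reduce to a finite collection of convex subproblems via the $Q_j$-relaxation introduced earlier, and certify the numerical lower bound in the same manner as the preceding lemmas.

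First I would write down the variables and constraints. The set $S = \{0, v_1\}$ has distance-$1$ enlargement $B(S) = \{0, v_1, -v_1, v_2, -v_2, v_1-v_2, v_2-v_1, 2v_1, v_1+v_2, 2v_1-v_2\}$, so there are ten variables $x_w$. The two equality constraints are
\begin{align*}
6 x_0 - \bigl(x_{v_1}+x_{-v_1}+x_{v_2}+x_{-v_2}+x_{v_1-v_2}+x_{v_2-v_1}\bigr) &= 1,\\
6 x_{v_1} - \bigl(x_0+x_{2v_1}+x_{v_1+v_2}+x_{v_2}+x_{v_1-v_2}+x_{2v_1-v_2}\bigr) &= 1,
\end{align*}
together with the box constraints $|x_w| \leq \tfrac12$, and the objective is $\sum_{w \in B(S)} \bigl(1 - c(x_w)\bigr)$.

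The objective is not convex over the whole cube, since $1 - c(x) = 1 - \cos(2\pi x)$ has second derivative $4\pi^2 \cos(2\pi x)$, which is positive on $(-\tfrac14,\tfrac14)$ and negative on the outer intervals $[-\tfrac12,-\tfrac14]\cup[\tfrac14,\tfrac12]$. The key step is to invoke the $Q_j$-relaxation described in the section preamble: on each outer subinterval the concavity of $1-c(\cdot)$ means the piecewise-linear secant interpolant is a lower bound for the true objective, while the convex portion on $[-\tfrac14,\tfrac14]$ is left unchanged. For every assignment of each of the ten variables to one of its $2j+2$ subintervals, one obtains a convex program with a strictly convex quadratic-like objective in the central piece and a linear objective on the outer pieces, subject to two affine equalities and box constraints; each such program has a unique local minimum locatable by KKT, and the overall minimum over all interval assignments is a rigorous lower bound for $Q(\{0,v_1\},\nu)$. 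Running this in SciPy with $j$ chosen large enough to push the relaxation gap below the required tolerance, one obtains the bound $Q(\{0,v_1\},\nu) \geq 1.1518$.

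The main obstacle is the combinatorial size of the interval enumeration: naively the number of cases is $(2j+2)^{10}$, which is large even for modest $j$. In practice this is tamed by branch-and-bound, exploiting the monotonicity of $Q$ in $S$ (Lemma preceding) and sign symmetry $x \mapsto -x$ of the program under $\nu \mapsto -\nu$, together with an early-termination rule whenever the running partial sum of $1-c(x_w)$ already exceeds $1.1518$. Since almost all of the minimizing mass is concentrated near $S$ with $|x_w| \leq \tfrac14$, in fact the search reduces very quickly to a single convex QP that certifies the claim, exactly parallel to the certifications in Lemmas \ref{singleton_lemma}, \ref{doubles_lemma}, and \ref{singleton_space_lemma}.
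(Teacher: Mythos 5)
Your proposal is correct and follows essentially the same route as the paper, whose stated proof is simply ``This was verified in SciPy'': you have correctly identified the ten variables in $B(\{0,v_1\})$, written the two affine equality constraints from the graph Laplacian at $0$ and $v_1$, and invoked the $Q_j$ piecewise-linear relaxation from the section preamble to produce a certified lower bound by convex optimization over interval assignments. Two small quibbles that do not affect correctness: the case count per variable is $2j+1$ (the central interval plus $j$ pieces on each outer range), not $2j+2$; and since $\nu$ is fixed here the sign symmetry $x\mapsto -x$, $\nu\mapsto-\nu$ does not by itself prune this particular program.
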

\begin{proof}
 This was verified in SciPy.
\end{proof}
If one connected component has such a large $Q$ value, then by Lemma \ref{doubles_lemma}, the other component can only be a singleton.  The case of two connected components with one a singleton is deferred to the end of the proof.  Thus consider the case of only connected components of size at least 2 in which adjacent nodes have opposite signs.  It follows that one of the components of diameter 1 has size 2, with adjacent nodes of opposite sign.
\begin{lemma}\label{signed_adjacent_spaced_lemma}
 Let $\nu_0 = 1$, $\nu_{v_1}=-1$.  Let $S = \{w: d(w, \{0, v_1\})\leq 1\}$ and assume $\nu_w = 0$ if $d(w, \{0, v_1\}) = 1$.  Then $Q(S, \nu) \geq 0.971$.
\end{lemma}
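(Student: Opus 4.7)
The plan is to reduce the lemma to the same kind of certified numerical computation that settles the previous lemmas in this subsection, and to describe carefully the structure of the relevant optimization program.

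First I would lay out the underlying configuration. The set $S = \{w : d(w, \{0,v_1\}) \leq 1\}$ consists of the ten vertices $\{0, v_1, -v_1, 2v_1, \pm v_2, \pm(v_1-v_2), v_1+v_2, 2v_1-v_2\}$, and the distance-$1$ enlargement $\{w : d(w,S) \leq 1\}$ over which the objective sum runs is the neighborhood $\bigcup_{u \in S}\{w : d(w,u)\leq 1\}$ in the triangular lattice. The constraints of $Q(S,\nu)$ are ten linear equalities, one per $u \in S$: at $u=0$ the Laplacian constraint reads $6x_0 - \sum_{d(w,0)=1}x_w = 1$, at $u=v_1$ it reads $6x_{v_1} - \sum_{d(w,v_1)=1}x_w = -1$, and at the remaining eight vertices of $S$ it reads $6x_u - \sum_{d(w,u)=1}x_w = 0$. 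The box constraints are $-\tfrac12 \leq x_w \leq \tfrac12$ for each variable.

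Next I would invoke the piecewise linear relaxation $Q_j$ introduced earlier. Split each of the intervals $\left[-\tfrac12,-\tfrac14\right]$ and $\left[\tfrac14,\tfrac12\right]$ into $j$ equal subintervals and replace $1-c(x_w)$ on each subinterval by its secant, which lies \emph{below} $1-c(x_w)$ there since $1-c$ is concave on $\left[\tfrac14,\tfrac12\right]$; on the central region $|x_w| \leq \tfrac14$, keep the convex function $1-c(x_w)$. On every product of subintervals, the resulting program is convex with linear equality constraints, so its minimum can be found reliably (and certified via the KKT conditions, i.e.\ by checking the gradient of the objective against the Lagrange multipliers at the returned optimum). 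Taking the overall minimum across all sign patterns of the variables relative to $\pm\tfrac14$ produces a rigorous lower bound for $Q(S,\nu)$.

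Then I would feed this program into the SciPy routine used for the analogous estimates in Lemmas \ref{singleton_lemma}, \ref{doubles_lemma}, and \ref{singleton_space_lemma}, with the choice of $j$ large enough that the secant approximation introduces error well below the gap between the computed minimum and $0.971$. By the translation and reflection invariance of the problem, one need only test the single configuration with $\nu_0 = +1$, $\nu_{v_1} = -1$, and zeros on $S \setminus \{0, v_1\}$. The output of the solver, together with the lower-bound monotonicity inherent in the piecewise linear relaxation, certifies $Q(S,\nu) \geq 0.971$.

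The main obstacle is essentially bookkeeping rather than mathematical: ensuring that the variable set $\{w : d(w,S) \leq 1\}$ is correctly enumerated in the triangular lattice geometry, that every one of the ten linear constraints is imposed exactly, and that enough sign/interval patterns are explored so the certified lower bound exceeds $0.971$. Since the numerical gap between the expected optimum and the threshold is comparable to the gaps achieved in the earlier lemmas of this subsection, the same precision of $Q_j$ used there suffices here.
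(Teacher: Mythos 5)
Your proposal is correct and is essentially the same as the paper's approach: the paper simply states "This was verified in SciPy," and what you describe — setting up the ten Laplacian equality constraints on $S$ (with $\nu_0=1$, $\nu_{v_1}=-1$, and zeros at the eight distance-one neighbors), passing to the piecewise-linear lower-bounding relaxation $Q_j$ on the outer intervals, enumerating the cells, and certifying the convex minimum in each — is precisely the rigorous content behind that verification, and your enumeration of $S$ and the Laplacian coefficients in the triangular lattice is accurate.
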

\begin{proof}
 This was verified in SciPy.
\end{proof}
Combining Lemma \ref{signed_adjacent_spaced_lemma} with Lemma \ref{doubles_lemma} if one of the connected components has diameter greater than 1, then the component of diameter 1 has distance from it at most 3, hence exactly 3 since the components are not connected.  As in the case of a singleton, this case is deferred to the end of the discussion.  If both components have diameter 1, then to be in $C^2(\sT)$, both have size two and have adjacent nodes of opposite sign.  Applying Lemma \ref{signed_adjacent_spaced_lemma} to each, these are separated by at most distance 4.  This reduces to a finite check, and none of the configurations achieves the optimum.

The argument above  reduces to considering either prevectors with support that are 2 path connected, or prevectors of diameter greater than 1 which are connected at distance 2, together with a second  connected component which is either a singleton or a pair of adjacent nodes of opposing signs.  By combining Lemmas \ref{singleton_space_lemma} and \ref{signed_adjacent_spaced_lemma} with Lemma \ref{doubles_lemma}, it follows that if there is a second connected component it has distance exactly 3 from the component of diameter greater than 1.

The proof is now concluded by computer search.  All connected components $C$ up to translation and symmetry were enumerated, which satisfied one of the following three criteria, $P(C, 1) \leq \gamma_{\tri} = 1.69416(5)$, $P(C, 1) \leq \gamma_{\tri} - 0.44256$, $P(C, 1) \leq \gamma_{\tri} - 0.6729$, with $\nu = 1$ indicating $\nu_x = 1$ for all $x$. The first list consists of all candidate supports which are connected and may give the optimum. By Lemmas \ref{singleton_lemma} and \ref{doubles_lemma}, the latter two lists enumerate configurations which may be paired with a singleton or a pair of adjacent nodes. Since $P(C, 1)$ is increasing in $C$, the enumeration was performed by building configurations from the base $C = \{0\}$ adding neighbors at distance 1 or 2, until the appropriate limit was exceeded.  The first list contains configurations with at most 7 vertices, the second list contains configurations with at most 5 vertices and the third list contains configurations with at most 4 vertices.

Note that a configuration which can appear with adjacent and opposite signed nodes and have a $C^2(\sT)$ assignment of signs must have an even number of nodes.  Also, those of size 2 have already been considered.  Only one configuration on 4 nodes, and no configurations on more nodes had a sufficiently small value of $P(C)$.  The configuration on 4 nodes was, up to symmetries, $\{0, v_1, v_2, v_1 + v_2\}$.  However, there is no assignment of signs which makes this configuration in $C^2(\sT)$ when paired with an adjacent pair of nodes with opposite signs. A connected component with 3 vertices cannot be assigned signs in such a way that a singleton can be added at distance 3 to make a configuration in $C^2(\sT)$, since the distance between the one pair of opposite signed nodes must match the other.  There is a single configuration on 5 nodes with $P$ value less than $\gamma_{\tri} - 0.44256$. There are 4 ways of assigning signs so that a singleton can be added that makes the configuration in $C^2(\sT)$.  Each of these was tested and none give the extremal configuration.  This reduces to the case of connected components.  This finite check was performed in SciPy and obtains $\nu_0$ and $\xi_0$ as claimed.

\subsubsection{Honeycomb tiling case}\label{honeycomb_section}

Let $v_1 = (1,0)$ and $v_2 = \left(\frac{1}{2}, \frac{\sqrt{3}}{2} \right)$ and $v = \frac{1}{3}(v_1 + v_2)$. Thus the points in the honeycomb lattice have the form $n_1 v_1 + n_2 v_2 + n_3 v$ with $n_3 \in \{0,1\}$ and $n_1, n_2 \in \zed$.  The optimal configuration is given by $\xi^* = g*\nu^*$
\begin{equation}
 \nu^* = \delta_0 - \delta_v + \delta_{v_2} - \delta_{-v_1 + v} + \delta_{v_2-v_1} - \delta_{v_2-v_1+v}.
\end{equation}
The value $f(\xi^*) = 5.977657(8)$ was obtained as in Lemma \ref{f_approx_lemma} with 
\begin{equation}
R = \{n_1 v_1 + n_2 v_2 + n_3 v: |n_1|, |n_2| \leq 10, n_3 \in \{0,1\} \}. 
\end{equation}

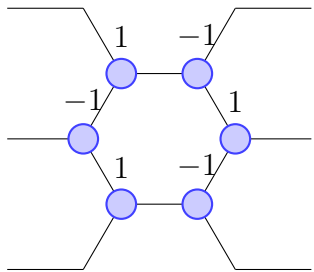
\begin{figure}
  \begin{tikzpicture}[node distance = 1.3cm, auto, place/.style = {circle, 
  thick, draw=blue!75, fill=blue!20}]
  \draw (0, 0)--(1, 0);
  \draw (0, 0)--(-.5, .86602540378);
  \draw (-.5, .86602540378)--(0, 1.73205080757);
  \draw (0, 1.73205080757)--(1, 1.73205080757);
  \draw (1, 1.73205080757)--(1.5, .86602540378);
  \draw (1.5, .86602540378)--(1,0);
  \draw (0,0)--(-.5, -.86602540378);
  \draw (1,0)--(1.5, -.86602540378);
  \draw (-1.5, .86602540378)--(-.5, .86602540378);
  \draw (1.5, .86602540378)--(2.5, .86602540378);
  \draw (0, 1.73205080757)--(-.5, 3*.86602540378);
  \draw (1, 1.73205080757)--(1.5, 3*.86602540378);
  \draw (1.5, 3*.86602540378) --(2.5, 3*.86602540378);
  \draw (-.5, 3*.86602540378)--(-1.5, 3*.86602540378);
  \draw (-.5, -.86602540378)--(-1.5, -.86602540378);
  \draw (1.5, -.86602540378)--(2.5, -.86602540378);
   \node[place] at (0,0)  (lower)[label=$1$]  {};
   \node[place] at (1,0)  (lower)[label=$-1$]  {};
   \node[place] at (-.5, .86602540378)  (lower)[label=$-1$]  {};
   \node[place] at (1.5, .86602540378)  (lower)[label=$1$]  {};
   \node[place] at (0, 1.73205080757)  (lower)[label=$1$]  {};
   \node[place] at (1, 1.73205080757)  (lower)[label=$-1$]  {};
 \end{tikzpicture}
\caption{The extremal configuration for the honeycomb tiling.}\label{fig:honeycomb_extremal}
 \end{figure}

 The following lemma  bounds the number of feasible connected components in the optimizing prevector.
 
 \begin{lemma}\label{height_2_lemma}
  The following optimization programs have the assigned values.  If $|\nu_0| = 1$ then $P(\{0\}, |\nu|) \geq 1.35$.  If $|\nu_0| = 2$ then $P(\{0\}, |\nu|) = 3.5$. If $|\nu_0| = |\nu_v| = 2$ then $P(\{0, v\}, |\nu|) = 4$.  If $|\nu_0| = |\nu_{v_1}| = 2$ then $P(\{0, v_1\}, |\nu|) \geq 5.98$.
 \end{lemma}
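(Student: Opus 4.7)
The plan is to reduce each of the four programs $P$ to a low-dimensional convex problem by exploiting the honeycomb's symmetries, then to certify the optimum via KKT conditions together with a boundary check, with the final numerical inequalities verified in SciPy as in the preceding lemmas. Throughout, since $\nu \geq 0$ in $P(S, |\nu|)$, I would first restrict to $x_w \in [0, \tfrac12]$: replacing $x_w$ by $|x_w|$ only strengthens the linear constraint while preserving the objective $1 - c(x_w)$, which is convex on this interval. I would then average using the symmetries of the honeycomb that preserve $S$ to collapse variables at the optimum.

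For $|\nu_0| = 1$, the $S_3$ rotation symmetry about $0$ forces the three neighbors of $0$ to a common value $b$, giving the 2-variable program: minimize $(1 - c(a)) + 3(1 - c(b))$ subject to $3a + 3b \geq 1$. The constraint is active, and the interior Lagrange relations $\sin(2\pi a) = 3 \sin(2\pi b)$ together with $a + b = \tfrac13$ reduce to $\tan(2\pi b) = \sqrt{3}/5$; substituting gives an optimum of approximately $1.352$. For $|\nu_0| = 2$ the same reduction yields constraint $3a + 3b \geq 2$; I expect the interior Lagrange system to have no feasible solution, leaving the boundary candidate $a = \tfrac12$, $b = \tfrac16$ as the minimum with value $2 + \tfrac32 = \tfrac72$ exactly.

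For $|\nu_0| = |\nu_v| = 2$, the $180^\circ$ rotation about the midpoint of edge $0v$ swaps the two support vertices and pairs their four outer neighbors, collapsing to two variables $a, t$ with constraint $4a + 2t \geq 2$. The boundary candidate $a = \tfrac12$, $t = 0$ meets both constraints with objective value $4$; I would confirm this is the global minimum by reducing to the one-parameter slice $F(a) = 2(1 - c(a)) + 4(1 - c(1 - 2a))$ on $a \in [\tfrac14, \tfrac12]$ and checking that $F'(a) = 4\pi \sin(2\pi a)(1 + 8 \cos(2\pi a))$ has a unique sign change, so that $a = \tfrac12$ is indeed the minimizing endpoint.

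The final case $|\nu_0| = |\nu_{v_1}| = 2$ is the most involved. Vertices $0$ and $v_1$ lie at graph distance $2$ and share the single common neighbor $v$; the reflection in their perpendicular bisector fixes $v$ and pairs the remaining four non-shared neighbors into two orbits, which by the convex symmetry of the reduced program further collapse to one common value at the optimum. This yields a three-variable convex program with variables $a$ (support vertices), $b = x_v$, $t$ (outer neighbors) and single constraint $3a + b + 2t \geq 2$; the boundary candidate $a = b = \tfrac12$, $t = 0$ attains value $6$. The main obstacle is excluding competing interior critical points of the Lagrange system $\sin(2\pi b) = \tfrac23 \sin(2\pi a)$, $\sin(2\pi t) = \tfrac13 \sin(2\pi a)$ along the active constraint, where each sine equation admits two $\arcsin$ branches; the small number of resulting cases makes this amenable either to explicit analysis or, following the paper's convention, to a certified numerical check in SciPy.
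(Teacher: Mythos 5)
Your overall plan (restrict to nonnegative variables, symmetrize, apply Lagrange/KKT, check boundaries, and defer the final numerics to SciPy) parallels the spirit of the paper, and your final numbers ($\approx 1.352$, $7/2$, $4$, $6$) are consistent with the claimed bounds. The paper itself verifies the first and fourth cases by SciPy and gives explicit arguments only for the middle two, so your attempt to make cases 1 and 4 more analytical is a genuinely different route. However, there is a real gap in the justification that undermines the whole reduction.

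You assert that $1 - c(x) = 1 - \cos(2\pi x)$ is ``convex on this interval'' $[0,\tfrac12]$ and then use that to symmetrize (``average \dots to collapse variables at the optimum'') and to speak of a ``three-variable convex program'' in the last case. This is false: $\tfrac{d^2}{dx^2}\bigl(1-\cos 2\pi x\bigr) = 4\pi^2\cos 2\pi x$ is negative on $(\tfrac14,\tfrac12)$, so the objective is concave there. The paper is careful about exactly this point --- it notes that the objective is convex only under the \emph{stronger} restriction $|x_w|\le\tfrac14$, and for the optimization programs it splits $[\tfrac14,\tfrac12]$ into subintervals with a piecewise-linear lower bound precisely to handle the non-convex region. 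Without convexity on $[0,\tfrac12]$, averaging over the symmetry orbit need not decrease the objective, so you cannot conclude a priori that the optimum lies on the symmetric slice, and KKT stationary points are not automatically global minima. The symmetric minimizer does turn out to be correct in each of the four cases, but your argument does not establish that.

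The paper's proof sidesteps this in the two cases it argues by hand. For $|\nu_0|=2$ it first shows $x_0$ is the largest variable (hence $x_0 \ge \tfrac13$ and $1-c(x_0)\ge\tfrac32$), uses the Lagrange condition $\sin(2\pi x_w)$ constant to conclude the neighbor values lie in at most two values \emph{symmetric about $\tfrac14$}, observes that if both branches appear a symmetric pair already contributes $2$ so the total is $\ge\tfrac72$, and only then reduces to the convex region $x_w\le\tfrac14$ where the one-variable calculus problem is legitimate. For $|\nu_0|=|\nu_v|=2$ it does not symmetrize at all: it decouples the two constraints by using $x_v\le\tfrac12$ to pass to the relaxed constraint $3x_0 + x_{w_1} + x_{w_2}\ge\tfrac32$, bounds that three-variable piece below by $2$, and doubles by symmetry. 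If you want to make your symmetrization step rigorous, you would need either to imitate the paper's branching argument (showing that any asymmetric or large-variable configuration already meets the target), or to replace the convexity appeal with the paper's piecewise-linear relaxation on $[\tfrac14,\tfrac12]$ and certify the resulting finitely many convex subproblems numerically. As written, the ``collapse by convexity'' step is the missing ingredient.
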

\begin{proof}
 The first and last values were calculated in SciPy. 
 
 For the remaining optimization problems, the optimum occurs with all variables non-negative, and hence a boundary value occurs only if a variable is equal to $\frac{1}{2}$.   If $\nu_0 = 2$ the largest value at the optimum is $x_0$, since otherwise the constraint is improved by exchanging $x_0$ and $x_w$ for some $d(w,0)=1$.  This implies that for $w$ such that $d(w,0)=1$, $x_w< \frac{1}{2}$.  By Lagrange multipliers, $\sin(2\pi x_w)$ is constant, so that these values are symmetric about $\frac{1}{4}$.  By averaging $x_0 \geq \frac{1}{3}$ so that $1-c(x_0) \geq 1.5$, and hence there are no pairs of $x_w$ symmetric about $\frac{1}{4}$ and all of the values are at most $\frac{1}{4}$, and hence all equal.  The remaining one variable calculus problem has optimum 3.5.  
 
 In the case where $|\nu_0| = |\nu_v| = 2$, the maximum of $x_v$ is $\frac{1}{2}$. Let $w_1, w_2$ be the two neighbors of 0 other than $v$. Solving the relaxed optimization problem, in which $3x_0 + x_{w_1} + x_{w_2} \geq \frac{3}{2}$,   $x_{w_1} = x_{w_2}$ or $x_{w_1} = \frac{1}{2} - x_{w_2}$, or one of $x_{w_1}$ or $x_{w_2} = \frac{1}{2}$, by Lagrange multipliers.  The values of $1 - c(x_{w_1}) + 1-c(x_{w_2})$ are at least 2 in the latter two cases.  Otherwise, $2x_{w_1} + 3 x_0 \geq 1.5$, and $2(1-c(x_{w_1})) + 1-c(x_0) \geq 2$ by solving the corresponding 1-variable calculus problem.  By symmetry, at least the same value is achieved on the remaining three nodes, so that the optimization has value at least 4.  This is achieved by $x_0 = x_v = \frac{1}{2}$.
 
 The last value was checked in SciPy.

\end{proof}
Several deductions can be made from Lemma \ref{height_2_lemma}.  First, if more than one node of height 2 appears in $\nu$ then they appear adjacent to each other.  This is because, if the nodes appeared at distance at least 3 from each other, then a translation of $P(\{0\}, 2) = 3.5$ could be applied at each node, and the sum would be 7, which is too large.  If the 2's appeared at distance 2 from each other, then a translation and rotation of $P(\{0,v_1\}, 2)\geq 5.98$ could be applied, and this again rules out the configuration.  It follows that at most two 2's can appear, and if two 2's do appear, they appear adjacent to each other. 

Similarly, since $2 P(\{0\}, 1) + P(\{0\}, 2) \geq 2 \times 1.35 + 3.5 = 6.2$, it follows that if there is a connected component containing a 2, there is at most one other connected component. Also, from the first estimate of the lemma, the optimal configuration has at most four connected components, since $5 \times P(\{0\}, 1) > 5 \times 1.35 > 6$.

The next phase of the search estimates $P(S, 1)$ for connected sets $S$.  The following lemma speeds up the computer search by reducing the number of variables which need to be considered having $|x| \geq \frac{1}{4}$.

\begin{lemma}
 Let $S$ be a connected component, and let $w$ satisfy $d(w,S) = 1$ and be such that $w$ has a single neighbor $v \in S$ such that $d(v,w) = 1$. Then the optimizing solution to $P(S,1)$ has $|x_w| \leq \frac{1}{4}$.
\end{lemma}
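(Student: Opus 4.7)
The plan is to argue by contradiction: supposing $x_w > 1/4$ at an optimum of $P(S,1)$, I will exhibit an explicit perturbation that strictly decreases the objective by concentrating the value of $x_w$ onto its unique neighbor $v \in S$.

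First I normalize. The objective is even in each variable, and every constraint has nonnegative coefficients and right-hand side, so flipping any negative variable to its absolute value preserves the objective and only relaxes constraints; hence I may assume every variable is nonnegative at the optimum. By the uniqueness hypothesis $x_w$ occurs only in the constraint at $v$, so that constraint must be tight (otherwise decreasing $x_w$ slightly would strictly reduce $1 - c(x_w)$, which is strictly increasing on $[0,1/2]$, without breaking feasibility). Tightness combined with the nonnegativity of the remaining terms in the constraint at $v$ yields the a priori bound $\deg(v)\,x_v + x_w \leq 1$, hence
\[
x_v + \frac{x_w}{\deg(v)} \leq \frac{1}{\deg(v)} \leq \frac{1}{2}.
\]

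The perturbation is $x_w \mapsto 0$ and $x_v \mapsto x_v + x_w/\deg(v)$. By construction the constraint at $v$ is preserved exactly; the other constraints in which $x_v$ appears (those at $v$'s neighbors in $S$) gain $x_w/\deg(v) \geq 0$ on their LHS and so are only relaxed; no other constraint involves $x_w$; and the box constraint on $x_v$ follows from the bound above. The change in the objective is
\[
\bigl[c(x_v) - c\bigl(x_v + x_w/\deg(v)\bigr)\bigr] - \bigl[1 - c(x_w)\bigr].
\]
The product-to-sum identity $c(A) - c(A+t) = 2\sin(\pi(2A+t))\sin(\pi t)$ bounds the first bracket above by $2\sin(\pi x_w/\deg(v))$, while $1 - c(x_w) = 2\sin^2(\pi x_w)$.

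The only remaining task is to verify the elementary inequality
\[
\sin(\pi x_w/\deg(v)) < \sin^2(\pi x_w), \qquad x_w \in (1/4, 1/2],\ \deg(v) \geq 2.
\]
Since $\sin$ is increasing on $[0,\pi/2]$ and $\deg(v) \geq 2$, the left-hand side is at most $\sin(\pi x_w/2)$; setting $s = \sin(\pi x_w/2) \in [\sin(\pi/8), \sin(\pi/4)]$ and writing $\sin^2(\pi x_w) = 4s^2(1-s^2)$ reduces the claim to the cubic inequality $4s^3 - 4s + 1 < 0$. This cubic has real roots near $s \approx 0.264$ and $s \approx 0.840$ and is strictly negative between them, so it is negative on $[\sin(\pi/8),\sin(\pi/4)] \approx [0.383, 0.707]$. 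This contradicts optimality and completes the proof. The only step beyond routine bookkeeping is this one short cubic verification.
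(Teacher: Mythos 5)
Your proof is correct, but it takes a genuinely different route from the paper's. The paper normalizes to $x\geq 0$, then notes that if $x_w>x_v$ the two values can be swapped (which preserves the objective, relaxes the constraint at $v$ since $\deg v\geq 2$, and does not disturb any other constraint because $w$ touches no other vertex of $S$); once $x_w\leq x_v$ is in force, $x_w>\tfrac14$ forces the constraint at $v$ to be \emph{strictly slack}, since $\deg(v)x_v+\sum_{d(u,v)=1}x_u\geq \deg(v)x_w+x_w=4x_w>1$ for $\deg v=3$, and then $x_w$ can be decreased to lower the objective. You instead deduce \emph{tightness} of the constraint at $v$, transfer the entire weight of $x_w$ onto $x_v$ via $x_v\mapsto x_v+x_w/\deg v$, and reduce the strict-improvement claim to the cubic inequality $4s^3-4s+1<0$ on $s\in(\sin(\pi/8),\sin(\pi/4)]$. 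Both arguments work. The paper's is shorter and purely combinatorial, needing no trigonometric estimate, and its threshold $1/4$ is tuned to the honeycomb's $\deg v=3$ (the general form would be $1/(\deg v+1)$); your perturbation argument, at the cost of the cubic check, gives the $1/4$ threshold uniformly for any $\deg v\geq 2$. One small point worth making explicit in your write-up: the transfer leaves the constraint at $v$ exactly balanced, and only \emph{increases} the left side of the other constraints involving $x_v$ (those at neighbors of $v$ in $S$), so feasibility really is preserved; you state this correctly but it is the one place a reader should pause.
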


\begin{proof}
 In the optimizing configuration $x \geq 0$.  Also, $x_w \leq x_v$, since otherwise the values of $x_w$ and $x_v$ can be exchanged which improves the constraint at $v$ and any other constraints containing $x_v$.  If $x_w > \frac{1}{4}$ then $3x_v + \sum_{d(u,v)=1} x_u \geq x_w + 3 x_v > 1$, and all of the constraints will still be satisfied if $x_w$ is reduced.  Hence $x_w \leq \frac{1}{4}$.
\end{proof}

 Since $P(S,1) \leq P(S',1)$ when $S \subset S'$, all connected components $S$ with $P(S,1) < \gamma_{\hex}$ were enumerated by building the components one vertex at a time by adding a vertex at distance at most 2 from the existing configuration.  All possible such components were enumerated in SciPy, and any such configuration has at most 8 vertices.  Next, all assignments of $1$ and $-1$ to the vertices of a connected component which cause the configuration to be in $C^2$ were tested, and for each such configuration, the value of $f(\xi)$ was estimated by calculating the first few values of $\xi$ near 0.  By doing so, it was verified that the configuration $\nu_0$ is the minimizing configuration with a single connected component, and height bounded by 1.  To prove that this is the overall minimizer, it remains to rule out nodes of height 2, and several connected components.
 \subsubsection*{Case of several connected components of height 1}
 The following Lemma reduces the number of connected components which need to be considered.
 \begin{lemma}
  The following optimization problems have the claimed values, $P(\{0, v\}, 1)  \geq 1.87$, $P(\{0, v_1\}, 1) \geq 2.59$. For $\nu_0 = 1$, $\nu_v = 0$, $Q_2(\{0,v\}, \nu) \geq 1.72$.
    Let $S_1 = \{w: d(w,0) \leq 1\}$ and $S_2 = \{w:  d(w,0) \leq 2\}$.  Then $Q(S_1, \delta_0) \geq 2.92$ and $Q(S_2, \delta_0) \geq 4.56$.
  
  \end{lemma}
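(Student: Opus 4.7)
The plan is to reduce each of the six optimization problems to a low-dimensional convex (or piecewise-convex) program whose minimum can be certified by the KKT conditions, either analytically after symmetrization or numerically in SciPy with a posteriori checks. For every variable in the domain $[-\tfrac12,\tfrac12]$, split into the central region $[-\tfrac14,\tfrac14]$, where $1-c(x)$ is strictly convex, and the two outer intervals $[\pm\tfrac14,\pm\tfrac12]$, on which $1-c$ is underestimated piecewise linearly as in the definition of $P_j$, $Q_j$. On each product of such sub-intervals the resulting objective is convex, so a valid lower bound is the minimum over finitely many convex sub-programs, each of whose stationarity conditions reduces to a small linear system.

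Symmetry is the main tool keeping the dimensionality manageable. The set $\{0,v\}$ has a reflection symmetry that exchanges the two non-$v$ neighbors of $0$ with the two non-$0$ neighbors of $v$; $\{0,v_1\}$ has a reflection across the perpendicular bisector of that edge; and $S_1$, $S_2$ inherit the full dihedral symmetry of the honeycomb around $0$. Since $1-c$ is even and convex in each coordinate on $[-\tfrac14,\tfrac14]$, and the constraint coefficients respect these symmetries, averaging any optimizer over the symmetry group preserves feasibility and does not increase the objective, so at least one optimum is fully symmetric. For the $P$ programs I would then invoke the earlier type of argument that when $\nu\geq 0$ the optimizer has all variables non-negative, and that every degree-$1$ neighbor $w$ of the constraint set has $x_w\leq\tfrac14$: otherwise decreasing $x_w$ still satisfies the relaxed $\geq$ constraints and lowers the objective. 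These observations eliminate the outer intervals for most variables and collapse the first four programs to one- or two-variable calculus problems whose minimum is available in closed form via Lagrange multipliers, exactly in the style used earlier for $P(\{0,v\},2)=4$.

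The main obstacle will be $Q(S_2,\delta_0)\geq 4.56$. The ball $S_2$ contains $1+3+6=10$ vertices, each carrying an equality constraint, and its distance-$1$ enlargement brings in still more free variables, so even after symmetrization the system is considerably larger than the others. Moreover the claimed bound $4.56$ leaves only a margin of roughly $1.4$ below $\gamma_{\hex}\approx 5.98$, so the piecewise-linear underestimate of $1-c$ on the outer intervals must be refined enough to lose less than that margin. My approach would be: impose the full honeycomb symmetry around $0$ to reduce to a handful of orbit representatives; subdivide each outer interval into sufficiently many equal pieces so that the linear interpolant stays within a safe fraction of $1-c$; solve the resulting convex linear-constrained program in SciPy; and certify the output by verifying primal feasibility together with the dual KKT conditions (non-negativity of multipliers on any active inequalities, stationarity up to machine-precision gradient) evaluated analytically at the returned symmetric minimizer.
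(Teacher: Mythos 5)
Your proposal matches the paper's approach: the paper's proof is simply ``These values were checked in SciPy,'' relying on the convex/piecewise-linear relaxation framework ($P$, $Q$, $P_j$, $Q_j$) and the certification-by-derivative remark in the ``Issues of precision'' subsection, which is exactly the machinery you describe. Your added symmetry reduction and KKT certification are reasonable implementation details rather than a different route; the only small slip is that the precision budget for $Q(S_2,\delta_0)\geq 4.56$ is set by the gap to the true optimum of that program, not by the gap to $\gamma_{\hex}$ (which only matters later when the bound is combined with $Q(\{0,v\},\nu)\geq 1.72$ to exceed $\gamma_{\hex}$).
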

\begin{proof}
 These values were checked in SciPy.
\end{proof}

\subsubsection*{Case of four connected components}
The case of four connected components is ruled out as follows.  No connected component contains a pair of nodes at distance two from each other, since $P(\{0, v_1\}, 1) + 3 P(\{0\}, 1) \geq 2.59+ 3 \times 1.35  > 6$.  A connected component containing adjacent nodes has two non-zero nodes.  Since the configuration must be in $C^2(\sT)$, the number of nodes is even, and hence if this occurs, at least two components are of this type. Since $2P(\{0, v\}, 1) + 2 P(\{0\}, 1) \geq 2 \times 1.87 + 2 \times 1.35 >6 $ this, also, does not achieve the minimum. This reduces to the case of four singleton components of height 1.  Notice that each of the singletons has distance at least 3 from all of the others.  

The following lemma is used to show that no two singletons can have pairwise distance at least 5.
\begin{lemma}\label{separating_line_lemma}
 Any half plane through 0 contains either $\{0, v, v_1, v_2\}$ or one of its rotations by 120 degrees.
\end{lemma}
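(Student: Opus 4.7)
The plan is to reduce the lemma to an elementary angular covering argument on the unit circle. First, I compute the polar angles of $v_1, v, v_2$ as seen from $0$: $v_1 = (1,0)$ has argument $0^\circ$, $v = \tfrac{1}{3}(v_1+v_2) = (\tfrac{1}{2}, \tfrac{\sqrt{3}}{6})$ has argument $\arctan(1/\sqrt{3}) = 30^\circ$, and $v_2 = (\tfrac{1}{2}, \tfrac{\sqrt{3}}{2})$ has argument $60^\circ$. Hence the three points $\{v_1, v, v_2\}$ all lie in the closed $60^\circ$ angular sector $W_0$ of nonzero points whose argument belongs to $[0^\circ, 60^\circ]$. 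Rotating by $120^\circ$ and $240^\circ$ about $0$ produces two further sectors $W_1$ (arguments in $[120^\circ, 180^\circ]$) and $W_2$ (arguments in $[240^\circ, 300^\circ]$), each containing the corresponding rotation of $\{v_1, v, v_2\}$.

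Next, I parametrize a closed half-plane through $0$ by its inward unit normal direction $\alpha \in \bR/360^\circ$; such a half-plane consists of $0$ together with all nonzero points whose argument lies in the closed arc $A_\alpha := [\alpha - 90^\circ, \alpha + 90^\circ]$. The sector $W_0$ is contained in $A_\alpha$ exactly when $\alpha - 90^\circ \leq 0^\circ$ and $\alpha + 90^\circ \geq 60^\circ$, i.e.\ $\alpha \in [-30^\circ, 90^\circ]$, an arc of length $120^\circ$. By rotational symmetry, $W_1 \subset A_\alpha$ iff $\alpha \in [90^\circ, 210^\circ]$ and $W_2 \subset A_\alpha$ iff $\alpha \in [210^\circ, 330^\circ]$.

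Finally, the three admissible $\alpha$-arcs $[-30^\circ, 90^\circ]$, $[90^\circ, 210^\circ]$, $[210^\circ, 330^\circ]$ tile $\bR/360^\circ$ exactly: each has length $120^\circ$, and consecutive ones meet at $90^\circ$, $210^\circ$, and $330^\circ \equiv -30^\circ$. Therefore every normal direction $\alpha$ lies in at least one of these arcs, so the corresponding half-plane contains $\{0\} \cup W_k$ for some $k \in \{0,1,2\}$; in particular it contains the corresponding rotation of $\{0, v, v_1, v_2\}$.

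There is no substantive obstacle here. The only delicate point is to work uniformly with closed half-planes and closed wedges so that the three $120^\circ$ admissibility arcs cover the full circle without gap, which handles the three exceptional boundary values $\alpha \in \{-30^\circ, 90^\circ, 210^\circ\}$ as well as the generic ones.
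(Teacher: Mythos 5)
Your argument is correct and is essentially a rigorous, explicit rendering of the same geometric observation the paper makes by pointing to Figure~\ref{fig:branch_length_2}: the three branches $\{v,v_1,v_2\}$ and its $120^\circ$ rotations each occupy a $60^\circ$ angular sector, separated by $60^\circ$ gaps, while a closed half-plane through $0$ covers a $180^\circ$ arc of directions, so it must contain one of the three sectors in full. Your parametrization of the half-plane by its inward normal $\alpha$ and the exact tiling of $\bR/360^\circ$ by the three $120^\circ$ admissibility arcs $[-30^\circ,90^\circ]$, $[90^\circ,210^\circ]$, $[210^\circ,330^\circ]$ is a clean way to make the paper's ``extremal line passes through the endpoint of one branch'' argument precise; the attention to closedness of the half-plane and the sectors is exactly what is needed to handle the boundary cases where the extremal line passes through $v_1$, $v_2$, or their rotated images.
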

\begin{proof}
 The extremal line passes through the endpoint of one branch of the tree extending from 0, and eliminates two, but not all three, of the branches, see Figure \ref{fig:branch_length_2}.
 
 \begin{figure}
   \begin{tikzpicture}[node distance = 1.3cm, auto, place/.style = {circle, 
   thick, draw=blue!75, fill=blue!20}]
  \draw (0, 0)--(1, 0);
  \draw (1,0)--(1.5, .86602540378);
  \draw (1,0)--(1.5, -.86602540378);
  \draw (0,0)--(-.5, .86602540378);
  \draw (-.5, .86602540378)--(-1.5, .86602540378);
  \draw (-.5, .86602540378)--(0, 2*.86602540378);
  \draw (0,0)--(-.5, -.86602540378);
  \draw (-.5, -.86602540378)--(-1.5, -.86602540378);
  \draw (-.5, -.86602540378)--(0, -2*.86602540378);
  \node[place] at (0,0)  (lower)[label=$0$]  {};
  \draw[dashed] (0, 2)--(0,-2);
  \end{tikzpicture}
\caption{In the honeycomb tiling, a line through a node can cut off two but not three branches of length 2.}\label{fig:branch_length_2}
 \end{figure}
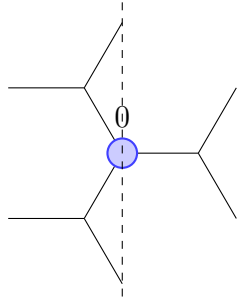
\end{proof}

\begin{lemma}\label{two_singletons_dist_5}
 The extremal configuration does not have two connected components which are singletons separated by a graph distance greater than 4.
\end{lemma}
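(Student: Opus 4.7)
The plan is to derive a contradiction by assuming two singleton components $x_1, x_2$ of the extremal prevector satisfy $d(x_1,x_2)>4$.  We are in the four-singleton case from the preceding discussion, so there are two additional singletons $x_3, x_4$ and all six pairwise graph distances are at least $3$.  The goal is to show $f(\xi)>\gamma_{\hex}\approx 5.978$, contradicting extremality.

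When $d(x_1,x_2)\ge 6$, the distance-$1$ enlargement of $S_2=\{w:d(w,x_1)\le 2\}$ (the ball of radius $3$ about $x_1$) is disjoint from the ball $B(S_1)$ of radius $2$ about $x_2$, so the additivity property of the optimization programs gives
\[
 f(\xi)\ge Q(S_2,\delta_{x_1})+Q(S_1,\delta_{x_2})\ge 4.56+2.92=7.48>\gamma_{\hex},
\]
a contradiction.  In the remaining sub-case $d(x_1,x_2)=5$ the radius-$2$ balls about $x_1$ and $x_2$ are just disjoint, so the two-singleton bound yields only $f(\xi)\ge 2Q(S_1,\delta)\ge 5.84$, which falls short of $\gamma_{\hex}$.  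I close the gap using the remaining singletons: in the generic sub-case in which each of $x_3,x_4$ lies at graph distance at least $4$ from each of $x_1,x_2$, the distance-$1$ neighbourhood of each $x_k$ is disjoint from the radius-$2$ balls about $x_1,x_2$ and from each other, and additivity with $P(\{0\},1)\ge 1.35$ gives
\[
 f(\xi)\ge 2Q(S_1,\delta)+2P(\{0\},1)\ge 5.84+2.70=8.54,
\]
again a contradiction.

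The main obstacle is the sub-case in which some $x_k\in\{x_3,x_4\}$ lies at graph distance exactly $3$ from some $x_i\in\{x_1,x_2\}$; then the radius-$1$ neighbourhood of $x_k$ and the radius-$2$ ball about $x_i$ share a common second-neighbour and the naive additive bound fails.  I would handle these residual configurations by downgrading $Q(S_1,\delta_{x_i})$ to $P(\{x_i\},1)\ge 1.35$ at each affected singleton, whose radius-$1$ neighbourhood is automatically disjoint from the other radius-$1$ neighbourhoods by pairwise distance $\ge 3$; the four resulting $P$ bounds combined with the retained $Q(S_1,\delta)$ at the unaffected singleton still exceed $\gamma_{\hex}$.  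In the tightest configuration, where a single $x_k$ realises distance $3$ to both $x_1$ and $x_2$, I would replace the pair $(Q(S_1,\delta_{x_i}),P(\{x_k\},1))$ by a combined program $P(S_1\cup\{x_k\},|\nu|)$ over the specific local patch of the honeycomb and verify numerically that its value together with $Q(S_1,\delta)$ at the far singleton and $P(\{\cdot\},1)\ge 1.35$ at the final singleton still surpasses $\gamma_{\hex}$.  This residual finite check is the analogue of the finite verifications already performed in the triangular-lattice argument and is carried out with the same piecewise-linear convex-optimization apparatus developed in Section \ref{Greens_function_section}.
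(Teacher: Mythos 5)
Your proposal follows a genuinely different route from the paper's. The paper treats the critical case $d(z_1,z_2)=5$ by a purely local geometric argument: the convex-hull property of Figure~\ref{fig:dist_3_convex_hull} and Lemma~\ref{separating_line_lemma} show that one branch of $z_2$'s neighborhood lies entirely beyond the distance-$3$ ball of $z_1$, so a translated $Q(S_2,\delta_0)$ at $z_1$ (variables within distance $3$) and a translated $Q(\{0,v\},\nu)$ at $z_2$ with $v$ the neighbor of $z_2$ pointing away from $z_1$ (variables within distance $2$ but restricted to the far branch) have disjoint variable sets already at distance $5$, giving $4.56 + 1.72 = 6.28 > \gamma_{\hex}$. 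Crucially, this argument refers only to $z_1$ and $z_2$, so it applies verbatim whenever the configuration contains two singleton components, regardless of what else is present. Your approach instead uses distance arithmetic alone for $d\geq 6$ (which is fine, and in fact gives a stronger bound than the paper needs there) and then tries to close the $d=5$ gap by recruiting the remaining two singletons of the four-singleton case with an ad hoc case analysis.

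There are two concrete gaps. First, the statement of the lemma is about \emph{any} two singleton components of the extremal configuration, and the paper reuses it later in the three-component discussion where the other components are not singletons; your argument opens with the assumption that ``there are two additional singletons $x_3,x_4$,'' so it is not a proof of the lemma as stated but only of a special case. Second, and more seriously, your own case analysis at $d=5$ does not close. When both $x_1$ and $x_2$ are ``affected'' --- say $d(x_3,x_1)=3$ and $d(x_4,x_2)=3$ (or one $x_k$ at distance exactly $3$ from both $x_1$ and $x_2$, which is consistent with $d(x_1,x_2)=5$) --- your downgrading rule replaces both $Q(S_1,\delta)$'s by $P(\{\cdot\},1)$, yielding only $4\times 1.35 = 5.40 < \gamma_{\hex}$. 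You acknowledge this and sketch a combined program $P(S_1\cup\{x_k\},|\nu|)$, but you neither specify the program precisely (the proposed constraint set mixes a ball about $x_1$ with an isolated $x_k$) nor supply a value, so the crucial step is asserted rather than proved. This residual configuration is exactly the one the paper's separating-line argument handles in a single stroke, which is why the paper's proof is both more general and shorter. To rescue your route you would need to (i) either drop the four-singleton hypothesis or separately verify that every $d=5$ application of the lemma in the paper occurs in the four-singleton regime, and (ii) actually compute lower bounds for the combined optimization programs arising when both $x_1$ and $x_2$ are affected, neither of which is currently done.
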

\begin{proof}
Suppose that two such singletons exist, say at  $z_1$ and $z_2$.  Since the convex hulls of the distance 1 and 3 neighborhoods of a node in the honeycomb tiling do not contain any further nodes of the tiling (see Figure \ref{fig:dist_3_convex_hull}), the distance 3 neighborhood of $z_1$ and the distance one neighborhood of $z_2$ are separated by a line.  

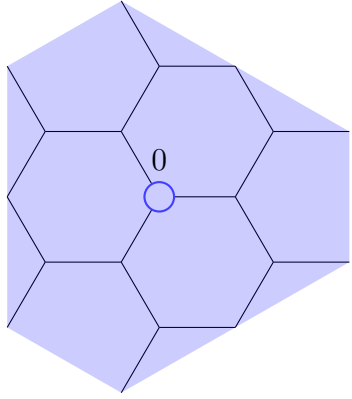
\begin{figure}
   \begin{tikzpicture}[node distance = 1.3cm, auto, place/.style = {circle, 
   thick, draw=blue!75, fill=blue!20}]
  \draw (0, 0)--(1, 0);
  \draw (1,0)--(1.5, .86602540378);
  \draw (1,0)--(1.5, -.86602540378);
  \draw (1.5, .86602540378) -- (1, 2*.86602540378);
  \draw (1.5, .86602540378)-- (2.5, .86602540378);

  \draw (1.5, -.86602540378)--(1, -2*.86602540378);
  \draw (1.5, -.86602540378)--(2.5, -.86602540378);
  
  \draw (0,0)--(-.5, .86602540378);
  \draw (-.5, .86602540378)--(-1.5, .86602540378);
  \draw (-1.5, .86602540378)--(-2, 2*.86602540378);
  \draw (-1.5, .86602540378)--(-2, 0);

  \draw (-.5, .86602540378)--(0, 2*.86602540378);
  \draw (0, 2*.86602540378)--(1, 2*.86602540378);
  \draw (0, 2*.86602540378)--(-.5, 3*.86602540378);

  \draw (0,0)--(-.5, -.86602540378);
  \draw (-.5, -.86602540378)--(-1.5, -.86602540378);
  \draw (-1.5,-.86602540378)--(-2, 0);
  \draw (-1.5, -.86602540378)--(-2, -2*.86602540378);

  \draw (-.5, -.86602540378)--(0, -2*.86602540378);
  \draw (0, -2*.86602540378)--(-.5, -3*.86602540378);
  \draw (0, -2*.86602540378)--(1, -2*.86602540378);
  
  \fill[fill= blue, opacity = 0.2] (-.5, -3*.86602540378) -- (2.5, -.86602540378) -- (2.5, .86602540378) -- (-.5, 3*.86602540378) -- (-2, 2*.86602540378)-- (-2, -2*.86602540378) ;
  
  \node[place] at (0,0)  (lower)[label=$0$]  {};
  \end{tikzpicture}
\caption{The convex hull of the distance 3 neighborhood of a point in the honeycomb tiling contains only vertices in the neighborhood.}\label{fig:dist_3_convex_hull}
 \end{figure}
 
By Lemma \ref{separating_line_lemma}, in fact, the distance one neighborhood of $z_2$ and one of its neighbors does not intersect the distance 3 neighborhood of $z_1$.  Since neither remaining node in the configuration intersects the distance 2 neighborhood of either $z_1$ or $z_2$, it is possible to apply a translation and rotation of $Q(S_2, \delta_0)$ at $z_1$ and, for $\nu_0 = 1$, $\nu_v = 0$, a translation and rotation of $Q(\{0, v\}, \nu)$ at with 0 translated to $z_2$, to obtain a value  at least $Q(S_2, \delta_0) + Q(\{0,v\}, \nu) \geq 4.56 + 1.72>6 $. 
\end{proof}

Thus all of the points have mutual distance at most 4 and at least 3.  
To be in $C^2(\sT)$, two of the points are positive and two negative, and the two pairs have the same center of mass.  Up to translation and symmetry, there are two types of pairs of points at distance 3, and two types of pairs of points at distance 4, see Figure \ref{fig:dist_3_4_points}.

\begin{figure}
  \begin{tikzpicture}[node distance = 1.3cm, auto, place/.style = {circle, 
  thick, draw=blue!75, fill=blue!20}]
  \draw (0, 0)--(1, 0);
  \draw (0, 0)--(-.5, .86602540378);
  \draw (-.5, .86602540378)--(0, 1.73205080757);
  \draw (0, 1.73205080757)--(1, 1.73205080757);
  \draw (1, 1.73205080757)--(1.5, .86602540378);
  \draw (1.5, .86602540378)--(1,0);
  \draw (0,0)--(-.5, -.86602540378);
  \draw (1,0)--(1.5, -.86602540378);
  \draw (-1.5, .86602540378)--(-.5, .86602540378);
  \draw (1.5, .86602540378)--(2.5, .86602540378);
  \draw (0, 1.73205080757)--(-.5, 3*.86602540378);
  \draw (1, 1.73205080757)--(1.5, 3*.86602540378);
  \draw (1.5, 3*.86602540378) --(2.5, 3*.86602540378);
  \draw (-.5, 3*.86602540378)--(-1.5, 3*.86602540378);
  \draw (-.5, -.86602540378)--(-1.5, -.86602540378);
  \draw (1.5, -.86602540378)--(2.5, -.86602540378);
  \draw (2.5, .86602540378)--(3, 2*.86602540378);
  \draw (2.5, 3*.86602540378)--(3, 2*.86602540378);
  \draw (2.5, .86602540378)--(3, 0);
  \draw (2.5, -.86602540378)--(3,0);
   \node[place] at (0,0)  (lower)[label=$0$]  {};
   \node[place] at (1, 1.73205080757)  (lower)[label=$A_3$]  {};
   \node[place] at (2.5, .86602540378)  (lower)[label=$A_3'$]  {};
   \node[place] at (1.5, 3*.86602540378)  (lower)[label=$A_4$]  {};
   \node[place] at (3, 2*.86602540378)  (lower)[label=$A_4'$]  {};
 \end{tikzpicture}
\caption{Up to rotation and translation, there are two pairs of nodes at distance 3 and two pairs of nodes at distance 4 in the honeycomb tiling.  Modulo the tiling, the midpoints of the pairings are inequivalent.}\label{fig:dist_3_4_points}
 \end{figure}
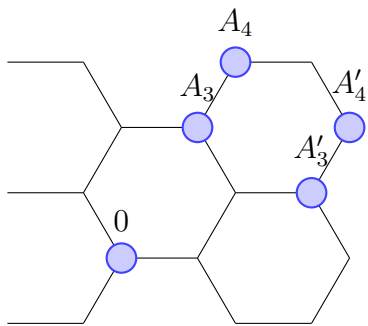
 
 Each of the types of pairs has an inequivalent type of center of mass, so that both positive and negative pair must be of the same type.  This cannot be obtained while keeping all nodes at distance at least 3 from each other.

\subsubsection*{Case of 3 connected components}
 Next consider configurations with three connected components.  There cannot be two connected components containing nodes at distance 2, since $2P(\{0,v_1\},1) + P(\{0\}, 1) \geq 2 \times 2.59 + 1.35 > 6$.  Also, if there is a connected component with nodes at distance 2, there is at most one further connected component with at least two nodes, since $P(\{0, v_1\},1) + 2P(\{0,v\},1) \geq 2.59 + 2 \times 1.87> 6$.  Hence there is at least one singleton.  First suppose that there is one singleton, one adjacent pair, and one further component which has a pair of nodes at distance 2.  The last component must have odd size, hence has size at least 3.  By checking the $P$ values, the only possibility is that the component is, up to symmetry, given by $\{0, v, v_1\}$.  
 \begin{lemma}
  The following signed optimization has value $Q(\{0,v\}, 1) \geq 3.79$. If $\nu_0 = \nu_{v_1} = 1$ and $\nu_v = -1$, $Q(\{0,v,v_1\}, \nu) \geq 2.59$.
 \end{lemma}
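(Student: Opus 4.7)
The plan is to verify both bounds by the piecewise linear relaxation scheme used for the $Q_j$ programs above. For $Q(\{0,v\},1)$, the distance-$1$ enlargement $B(\{0,v\}) = \{0, v, v-v_1, v-v_2, v_1, v_2\}$ has six vertices, so the program has six real variables, two linear equality constraints (one for each of $0$ and $v$), and box constraints $|x_w| \leq \frac{1}{2}$. For $Q(\{0,v,v_1\},\nu)$ with the prescribed signs $\nu_0=\nu_{v_1}=1$, $\nu_v=-1$, the enlargement $B(\{0,v,v_1\}) = \{0, v, v_1, v-v_1, v-v_2, v_2, v_1+v, v_1-v_2+v\}$ has eight vertices, giving an eight-variable program with three equality constraints and the same box constraints.

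Each summand $1-c(x_w)$ is convex on $\left[-\frac14, \frac14\right]$ and concave on the complementary intervals $\left[-\frac12, -\frac14\right]$ and $\left[\frac14, \frac12\right]$. Following the $Q_j$ relaxation, I would split each outer interval into $j$ equal subintervals and replace $1-c(x_w)$ on every outer subinterval by the linear interpolant between its endpoint values; since the chord lies below a concave function, this yields a globally valid piecewise linear lower bound for the objective. Restricted to any product of such subintervals, the relaxed problem is the minimization of a convex function on a convex polytope, and its minimum can be computed by a standard convex solver and certified by a KKT check at the returned optimum. Taking the minimum over this finite collection of subproblems furnishes a certified lower bound for $Q$, namely $Q_j$ in the notation above.

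Symmetry sharply prunes the case enumeration. For the first program the involution swapping $0 \leftrightarrow v$ together with the corresponding swap of the outer triples $\{v-v_1,v-v_2\} \leftrightarrow \{v_1,v_2\}$ preserves both the constraints and the objective, so one may restrict to fixed points of this $\zed/2$ action. For the second program the reflection through the midpoint of the segment from $0$ to $v_1$ swaps $x_0 \leftrightarrow x_{v_1}$ and $x_{v-v_2} \leftrightarrow x_{v_1-v_2+v}$ while fixing $x_v, x_{v-v_1}, x_{v_2}, x_{v_1+v}$, and the sign pattern is invariant. The main obstacle is only choosing $j$ large enough that the piecewise linear lower bound reaches the claimed values $3.79$ and $2.59$; based on the precision achieved in the previous singleton and doubleton estimates, $j = 4$ should be comfortably sufficient, and the whole calculation slots directly into the SciPy framework already in use.
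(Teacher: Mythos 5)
Your route is the paper's own: this lemma, like its neighbours, is justified in the paper only by an implicit ``verified in SciPy,'' and your description of the $Q_j$ piecewise-linear relaxation is precisely the method the paper has already set up, with the six-variable/two-constraint and eight-variable/three-constraint programs correctly identified (the vertex counts for $B(\{0,v\})$ and $B(\{0,v,v_1\})$ are both right).

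The symmetry pruning, however, is flawed in two ways. First, the involution you describe for the eight-variable program is not a symmetry of that program: the constraint at $0$ reads $3x_0-(x_v+x_{v-v_1}+x_{v-v_2})=1$ and the constraint at $v_1$ reads $3x_{v_1}-(x_v+x_{v_1+v}+x_{v_1-v_2+v})=1$; a map that swaps $0\leftrightarrow v_1$ and $v-v_2\leftrightarrow v_1-v_2+v$ while fixing $v-v_1$ and $v_1+v$ sends the first constraint to $3x_{v_1}-(x_v+x_{v-v_1}+x_{v_1-v_2+v})=1$, which is not the constraint at $v_1$. The correct involution must also swap $v-v_1\leftrightarrow v_1+v$. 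Second, and more fundamentally, restricting the feasible region to the fixed-point subspace of the $\zed/2$ action yields an \emph{upper} bound on the optimum, not the lower bound required: the objective $\sum_w\bigl(1-c(x_w)\bigr)$ is not convex on the full box $|x_w|\le\tfrac12$ (it is concave on $\tfrac14\le|x_w|\le\tfrac12$), so one cannot argue that averaging $x^*$ with $\sigma(x^*)$ decreases the objective. In the $Q_j$ relaxation this averaging argument only applies inside a subproblem whose product of intervals is itself $\sigma$-symmetric; the non-symmetric subproblems still have to be examined (keeping one representative per orbit is enough). As written, the fixed-point restriction could discard exactly the subproblem at which $Q_j$ attains its minimum. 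Neither error is fatal to the computation --- simply solving all $Q_j$ subproblems without symmetry pruning gives a certified lower bound and matches what the paper does --- but the restriction as asserted is not justified.
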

Using this lemma, it follows that the sign of the node at position $v$ is opposite of the sign of the nodes at 0 and $v_1$, since $Q(\{0,v\},1) + P(\{0,v\},1) + P(\{0\},1) \geq 3.79 + 1.87 + 1.35>6$. Say $\nu_{v}=-1$. Also, the pair of adjacent nodes have opposite sign, since $Q(\{0,v\}, 1) + Q(\{0,v,v_1\},\nu) + P(\{0\},1) \geq 3.79 + 2.59 + 1.35 > 6$. Fix the size 3 configuration at $\{0, v, v_1\}$ with $\nu_0 = \nu_{v_1} = 1$, $\nu_v = -1$.  This configuration has signed sum $v_1 - v$, and the singleton has sign $-1$.  Since the pair has sum which is a vector of length $\|v\|$, the possible locations for the singleton are displayed for the net sum to be 0.
For the configuration to have mean 0, this places the singleton on some corner of the hexagon with center $v_1-v$, but no such node has distance at least 3 from the configuration $\{0, v, v_1\}$, see Figure \ref{fig:0_v_v1_configuration}.

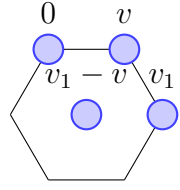
\begin{figure}
  \begin{tikzpicture}[node distance = 1.3cm, auto, place/.style = {circle, 
  thick, draw=blue!75, fill=blue!20}]
  \draw (0, 0)--(1, 0);
  \draw (1,0)--(1.5, -.86602540378);
  \draw (0,0)--(-.5, -.86602540378);
  \draw (-.5, -.86602540378)--(0, -2*.86602540378);
  \draw (0, -2*.86602540378)--(1, -2*.86602540378);
  \draw (1, -2*.86602540378)--(1.5, -.86602540378);
  
  \node [place] at (.5, -.86602540378) (lower)[label = $v_1-v$] {};
  \node [place] at (1,0) (lower) [label = $v$] {};
  \node [place] at (1.5, -.86602540378) (lower) [label = $v_1$] {};
   \node[place] at (0,0)  (lower)[label=$0$]  {};
 \end{tikzpicture}
\caption{The alternating sign configuration on $\{0,v,v_1\}$ has moment $v_1-v$.}\label{fig:0_v_v1_configuration}
 \end{figure}
 
This reduces to the case of a pair of singletons together with a configuration having at least two nodes.  There are no configurations $S$ of size at least 6 satisfying $P(S,1) + 2\times 1.35  < 6$ and three such configurations of size 4, which are pictured in Figure \ref{fig:3_size_4_components}.
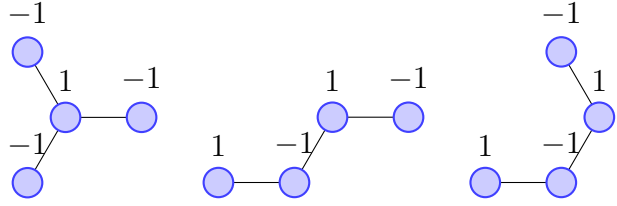
\begin{figure}
 \begin{tikzpicture}[node distance = 1.3cm, auto, place/.style = {circle, 
 thick, draw=blue!75, fill=blue!20}]
  \draw (0, 0)--(1, 0);
  \draw (0, 0)--(-.5, .86602540378);
  \draw (0,0)--(-.5, -.86602540378);
   \node[place] at (0,0)   (lower)[label = $1$] {};
   \node[place] at (1, 0) (lower) [label = $-1$]  {};
   \node[place] at (-.5, .86602540378) (lower) [label = $-1$]  {};
   \node[place] at (-.5, -.86602540378) (lower) [label = $-1$]  {};
 \end{tikzpicture}\;\;
   \begin{tikzpicture}[node distance = 1.3cm, auto, place/.style = {circle, 
   thick, draw=blue!75, fill=blue!20}]
  \draw (0, 0)--(1, 0);
  \draw (1,0)--(1.5, .86602540378);
  \draw (1.5, .86602540378)--(2.5, .86602540378);
   \node[place] at (0,0) (lower) [label = $1$]   {};
   \node[place] at (1, 0) (lower) [label = $-1$] {};
   \node[place] at (1.5, .86602540378) (lower) [label = $1$]  {};
   \node[place] at (2.5, .86602540378) (lower) [label = $-1$]  {};
 \end{tikzpicture}\;\;
    \begin{tikzpicture}[node distance = 1.3cm, auto, place/.style = {circle, 
    thick, draw=blue!75, fill=blue!20}]
  \draw (0, 0)--(1, 0);
  \draw (1,0)--(1.5, .86602540378);
  \draw (1.5, .86602540378)--(1, 2*.86602540378);
   \node[place] at (0,0)  (lower) [label = $1$]  {};
   \node[place] at (1, 0) (lower) [label = $-1$]  {};
   \node[place] at (1.5, .86602540378) (lower) [label = $1$] {};
   \node[place] at (1, 2*.86602540378) (lower) [label = $-1$]  {};
 \end{tikzpicture}
\caption{There are three size four components which can be paired with a pair of singletons based upon their $P$ values. Adjacent nodes have opposite signs.}\label{fig:3_size_4_components}
 \end{figure}
 Since $Q(\{0,v\},1) \geq 3.79$ it is not possible that adjacent nodes have the same sign.  Thus in the latter two pictures there are an equal number of positive and negative nodes, and for two singletons to be added that make a configuration in $C^2(\sT)$, they have opposite sign.  In the case of the middle picture of Figure \ref{fig:3_size_4_components}, the two opposite signed nodes differ by $2v = v_1 + v_2 - v$.
 \begin{lemma}\label{diameter_hex_lemma}
  Let $\nu_v = -1, \nu_{v_1+v_2} = 1$, $\nu_w = 0$ otherwise, and let $S = \{w: d(w, \{v, v_1+v_2\}) \leq 1\}$.  Then $Q(S, \nu) \geq 5.03$.
 \end{lemma}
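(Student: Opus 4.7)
The plan is to verify the bound by solving $Q(S,\nu)$ directly in SciPy, exactly as was done for the preceding numerical lemmas in this subsection. Since $v$ and $v_1+v_2$ lie at graph distance $3$ in the honeycomb tiling, their distance-$1$ neighborhoods
\[
B_1(v) = \{v,\, 0,\, v_1,\, v_2\}, \qquad B_1(v_1+v_2) = \{v_1+v_2,\, v_1+v,\, v_2+v,\, v_1+v_2+v\}
\]
are disjoint, so $Q(S,\nu)$ becomes an eight-variable program with the two linear equality constraints
\begin{align*}
3x_v - x_0 - x_{v_1} - x_{v_2} &= -1,\\
3x_{v_1+v_2} - x_{v_1+v} - x_{v_2+v} - x_{v_1+v_2+v} &= +1,
\end{align*}
together with the box constraints $|x_w|\leq 1/2$ on all eight variables.

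The objective $\sum_{w}\bigl(1-c(x_w)\bigr)$ is convex on the sub-box $|x_w|\leq 1/4$ but concave outside it, so I would pass to the relaxation $Q_j$ introduced in the general setup: partition each half-range $[-1/2,-1/4]$ and $[1/4,1/2]$ into $j$ equal sub-intervals and, on each concave piece, replace $1-c(x)$ by its linear secant lower bound. On every product of sub-intervals the resulting program is convex and solvable by a standard routine; minimizing over the finite family of products then yields a certified lower bound on $Q(S,\nu)$.

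The main, and essentially the only, obstacle is numerical tightness: the slack between $Q_j$ and $Q$ must be smaller than the gap between the true optimum and $5.03$. Because the prescribed signs $\nu_v=-1$ and $\nu_{v_1+v_2}=+1$ force the two clusters to satisfy their constraints with opposite orientations, one cannot, as in the unsigned program $P$, reduce $|x_v|$ and $|x_{v_1+v_2}|$ simultaneously by making the neighbor variables push in a common direction. Consequently the value should lie comfortably above $P(S,|\nu|)$, and a modest partition $j$ of the order used in Lemma \ref{signed_adjacent_spaced_lemma} should be enough to certify the stated bound.
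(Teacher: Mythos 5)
The main error is that you misidentified what $Q(S,\nu)$ is when $S=\{w: d(w,\{v,v_1+v_2\})\le 1\}$. By the paper's definition, the program $Q(S,\nu)$ imposes one equality constraint $(\deg u)x_u - \sum_{d(w,u)=1} x_w = \nu_u$ for \emph{every} $u\in S$, and it takes variables on $\{w: d(w,S)\le 1\}$, i.e.\ on the distance-$2$ neighborhood of $\{v,v_1+v_2\}$. Here $S$ is the eight-point set $\{v,0,v_1,v_2\}\cup\{v_1+v_2, v_1+v, v_2+v, v_1+v_2+v\}$, so there are \emph{eight} linear constraints, six of them of the form $\Delta\xi(u)=0$ at the six points of $S$ where $\nu$ vanishes. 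Your formulation imposes only the two constraints at $v$ and $v_1+v_2$ and takes variables only on those eight points; that is the strictly smaller program $Q(\{v, v_1+v_2\},\nu)$, not $Q(S,\nu)$.

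This is not a cosmetic difference: with only the two constraints and disjoint neighborhoods, the program decouples into two translates of $Q(\{0\},\delta_0)$, whose value (see Lemma \ref{height_2_lemma}) is about $1.35$, giving a total near $2.7$ rather than the required $5.03$. It is precisely the six additional vanishing-Laplacian constraints at the neighbors of $v$ and $v_1+v_2$, together with the enlarged variable set, that force the function to spread out and drive the lower bound up; compare $Q(\{0\},\delta_0)\approx 1.35$ with $Q(S_1,\delta_0)\ge 2.92$ and $Q(S_2,\delta_0)\ge 4.56$ in the preceding lemma, where $S_1,S_2$ are the distance-$1$ and distance-$2$ balls. The relaxation strategy you describe (split each half-range $[\,1/4,1/2\,]$ into $j$ subintervals, replace $1-c(x)$ by its secant lower bound, minimize over products of boxes) is the right framework and matches the paper's $Q_j$ construction, but you must apply it to the correct eight-constraint program on the correct variable set for the claimed bound to be reachable.
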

The middle picture may now be ruled out as follows. Fix an orientation  by setting $\nu_0 = \nu_{v_2} = 1$ and $\nu_{v} = \nu_{v_2 +v} = -1$.  Thus the pair of singletons differ by $2v = v_1 + v_2 - v$, which is possible only if they form horizontal nodes which are on the diameter of a hexagon. Applying the optimization of Lemma \ref{diameter_hex_lemma} with $v$ and $v_1+ v_2$ translated to correspond with the two highlighted nodes uses variables at a configuration pictured.

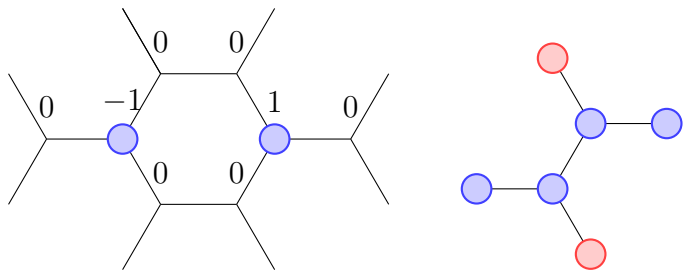
\begin{figure}
    \begin{tikzpicture}[node distance = 1.3cm, auto, place/.style = {circle, 
    thick, draw=blue!75, fill=blue!20}]
  \draw (0, 0)--(1, 0);
  \draw (0,0)--(-.5, .86602540378);
  \draw (0,0)--(-.5, -.86602540378);
  \draw (1,0)--(1.5, .86602540378);
  \draw (1,0)--(1.5, -.86602540378);
  \draw (1.5, .86602540378)--(1, 2*.86602540378);
  \draw (1.5, -.86602540378)--(1, -2*.86602540378);
  \draw (1.5, .86602540378)--(1, 2*.86602540378);
  \draw (1.5, .86602540378)--(2.5, .86602540378);
  \draw (1.5, -.86602540378)--(2.5, -.86602540378);
  \draw (2.5, .86602540378)--(3, 0);
  \draw (2.5, -.86602540378)--(3,0);
  \draw (3,0)--(4,0);
  \draw (4,0)--(4.5, .86602540378);
  \draw (4,0)--(4.5, -.86602540378);
  \draw (2.5, .86602540378)--(3, 2*.86602540378);
  \draw (2.5, -.86602540378)--(3, -2*.86602540378);
   \node[place] at (1,0)  (lower)  [label = $-1$]{};
   \node[place] at (3,0) (lower) [label = $1$]{};
   \node at (0,0) (lower) [label = $0$]{};
   \node at (4,0) (lower) [label = $0$]{};
   \node at (1.5, .86602540378) (lower) [label = $0$] {};
   \node at (2.5, .86602540378) (lower) [label = $0$] {};
   \node at (1.5, -.86602540378) (lower) [label = $0$] {};
   \node at (2.5, -.86602540378) (lower) [label = $0$] {};

 \end{tikzpicture}\;\;\;\;\;\;\;
   \begin{tikzpicture}[node distance = 1.3cm, auto, place/.style = {circle, 
   thick, draw=blue!75, fill=blue!20}]
  \draw (0, 0)--(1, 0);
  \draw (1,0)--(1.5, .86602540378);
  \draw (1.5, .86602540378)--(2.5, .86602540378);
  \draw (1,0)--(1.5, -.86602540378);
  \draw (1.5, .86602540378) -- (1, 2*.86602540378);
   \node[place] at (0,0)    {};
   \node[place] at (1, 0)  {};
   \node[place] at (1.5, .86602540378)  {};
   \node[place] at (2.5, .86602540378)   {};
   \node[place, draw = red!75, fill = red!20] at (1.5, -.86602540378) {};
   \node[place, draw = red!75, fill = red!20] at (1, 2*.86602540378) {};
 \end{tikzpicture}
\caption{The picture on the left shows those variables used in the optimization program of Lemma \ref{diameter_hex_lemma}.  If both red nodes in the picture on the right are used in this optimization, then one of the blue nodes is used as well.}\label{fig:red_blue_figure}
 \end{figure}

Note that this configuration contains all of the nodes in its convex hull.  In particular, in Figure \ref{fig:red_blue_figure} one of the two red nodes is not used in the optimization of Lemma \ref{diameter_hex_lemma}, since if both red nodes were used, one of the blue nodes in the figure would be used, also, which is impossible since the singletons have distance at least 3 from the blue nodes.

 It follows that $P(\{0\},1)$ may be applied using variables at three blue nodes and one red node,  together with the optimization of Lemma \ref{diameter_hex_lemma}, which gives a value at least $1.35 + 5.03 > 6$.  
 
 To rule out the final picture of Figure \ref{fig:3_size_4_components}, fix the picture by setting $\nu_0 = \nu_{v_2} = 1$, $\nu_v = \nu_{-v_1 + v_2 + v} =-1$.
  Thus the signed sum of the picture is $v_1-2v = v-v_2$, which is a segment of graph length one in the tiling.  Thus it is impossible to place the opposite signed singletons at distance at least 3 from each other and obtain a configuration of mean 0.

 In the first picture of Figure \ref{fig:3_size_4_components}, fix the picture by making center node at 0.  The two singletons $x_1, x_2$ have the same sign as the center node, and have it as their center of mass. This is only possible if each has distance at least 4 from the central node. Also, for $x_1 = -x_2$, it follows that both $x_1$ and $x_2$ are in the triangular lattice, say at $\pm(n_1v_1 + n_2 v_2)$.  The graph distance in the hex tiling between points in the triangular lattice is twice the graph distance in the triangular lattice, since two moves in the hex lattice is one move in the triangular lattice.  It follows that the distance between $x_1$ and $x_2$ is at least 6, since the distance 2 neighborhood of $x_1$ in the triangular lattice is convex (a hexagon).    This case is now ruled out by applying  Lemma \ref{two_singletons_dist_5} which rules out a pair of singletons at hex graph distance at least 5 from each other.

 It remains to consider the case in which there are two singletons and a component of two nodes.  Necessarily, the two nodes have the same sign, since otherwise the two singletons would have the opposite sign and have the same distance to remain in $C^2(\sT)$.  This eliminates the case that the two nodes are adjacent, since $Q(\{0,v\}, 1) + 2P(\{0\},1) \geq 3.79 + 2 \times 1.35 > 6$, so assume that they are at 0 and $v_1$.  The two singletons thus have center of mass $\frac{v_1}{2}$.  
 The two singletons must lie in the triangular lattice generated by $v_1, v_2$, since a component of $\frac{v}{2}$ or $v$ has $v_1$ or $v_2$ coordinates which have denominator divisible by 3.  
 Since each singleton has hex graph distance at least 3 from $\{0, v_1\}$, it follows that their distance from $\{0, v_1\}$ in the triangular lattice is at least 2. Let the two singletons be at $x$, and $v_1-x$.  Let $B_r(x)$ be the ball of radius $r$ in the triangular lattice distance centered at $x$, which is a hexagon.  If this ball contains $v_1-x$, then its convex hull contains $\frac{v_1}{2}$ and hence both 0 and $v_1$, so that $r \geq 3$, and hence $x$ and $v_1-x$ have hex distance at least 6.  The configuration is now ruled out by applying  Lemma \ref{two_singletons_dist_5} which forbids singletons at hex graph distance at least 5 from each other. 
 
 There is one further case to be eliminated with three connected components, in which no component has diameter more than 1.  The case of a pair of adjacent vertices with two singletons was already ruled out.  This leaves only the case of 3 pairs of adjacent vertices.  Note that each of these would necessarily consist of pairs having opposite sign, since a pair of the same sign contribute $Q(\{0,v\},1) \geq 3.79$ and $2 \times P(\{0,v\},1) + Q(\{0,v\},1)>6$.  The case is limited further by the following lemma.
 
   \begin{lemma}\label{one_node_expansion_boundary}
  The following optimization programs have the stated values. For $S_1 = \{w : d(w, \{0,v\}) \leq 1$ and for $\nu_0 = 1$, $\nu_v = -1$, $\nu_w = 0$ otherwise, $Q(S_1, \nu) \geq 2.99$.  For $S_2 = \{w : d(w, \{0, v_1\}) \leq 1\}$ and $\nu_0 = 1$, $\nu_{v_1} = -1$, $\nu_w = 0$ otherwise, $Q(S_2, \nu) \geq 4.47$.  If $\nu_0 = 1$ and $\nu_{v_1} = 1$, $\nu_w = 0$ otherwise, then $Q(S_2, \nu) > 7.2$.
 \end{lemma}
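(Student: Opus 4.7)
The plan is to verify each of the three stated lower bounds by solving the associated constrained optimization program numerically, using the same framework described earlier in this section. For each part the variables are $\{x_w : d(w, S_i) \leq 1\}$, the constraint $(\deg u)x_u - \sum_{d(w,u)=1} x_w = \nu_u$ is imposed at each $u \in S_i$ (giving $|S_i|$ linear equations, one per vertex in $S_i$), and the objective $\sum_{d(w, S_i) \leq 1} 1 - c(x_w)$ is minimized over $x_w \in [-\tfrac12, \tfrac12]$.

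The main obstacle is that $1 - c(x)$ is convex only on $[-\tfrac14, \tfrac14]$, so a global convex solver does not apply directly. As in the triangular and earlier honeycomb lemmas, I would decompose each coordinate range into the three subintervals $[-\tfrac12, -\tfrac14]$, $[-\tfrac14, \tfrac14]$, $[\tfrac14, \tfrac12]$ and solve the program separately on every product of these subintervals. On a product in which each coordinate lies in $[-\tfrac14, \tfrac14]$ the problem is a genuine convex program with nondegenerate Hessian and the interior minimum (or the KKT-certified boundary minimum) is exact; on any product in which some coordinate is forced into an outer interval, $1 - c(x)$ is replaced by a piecewise-affine lower bound on that interval (using several equal subpieces as in the definition of $Q_j$), so the resulting linear-quadratic relaxation gives a rigorous lower bound. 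The minimum over all products is then a lower bound for $Q(S_i, \nu)$.

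For part (1), $|S_1| = 6$ in the honeycomb ($\{0,v\}$ together with the two further neighbors of each), yielding six equality constraints among the $x_w$ on the distance-1 enlargement; an analogous count applies in parts (2) and (3) with $S_2$ built on the non-adjacent pair $\{0, v_1\}$ (which have graph distance $2$, connected through $v$). Parts (2) and (3) differ only in the sign of $\nu_{v_1}$, but the corresponding optima differ qualitatively: in (3), with $\nu_0 = \nu_{v_1} = 1$, the two unit prevector residuals have the same sign at two vertices sharing a common neighbor, forcing large values at the shared neighbor $v$ and its surroundings, which produces the much larger lower bound $7.2$.

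Each of the three optima returned by SciPy is certified after the fact by checking the KKT conditions: feasibility of the linear equalities and box constraints, and vanishing of the gradient of the Lagrangian (with signs of the active-box multipliers matching the inequality directions). Since on each product of subintervals the relaxed objective is convex, any KKT point is a global minimum of the relaxed program on that piece, and the bounds $2.99$, $4.47$, $7.2$ are obtained as the minima of these certified values over the (finitely many) products. The arithmetic tolerance is handled as in the preceding lemmas of this section.
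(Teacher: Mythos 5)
Your proposal is correct and follows essentially the same route the paper takes: the paper's entire proof of this lemma is ``These values were verified in SciPy,'' and the methodology you describe (imposing the Laplacian equality constraints at each vertex of $S_i$, restricting to the convex region $[-\frac14,\frac14]$ where possible, splitting the outer ranges into subintervals with a piecewise-affine lower-bounding objective as in $Q_j$, and certifying each local optimum via KKT) is exactly the general scheme laid out earlier in the section for implementing $Q$ and $Q_j$. The only omission is that you do not invoke the paper's auxiliary lemma that any variable $x_w$ with a single neighbor in $S$ automatically satisfies $|x_w|\le\frac14$ at the optimum, which is what keeps the number of subinterval products from blowing up combinatorially; this is a practical speedup rather than a logical gap.
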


 \begin{proof}
  These values were verified in SciPy.
 \end{proof}
 
 It follows that none of the pairs of adjacent vertices has distance at least 4 from the others, since otherwise the optimization involving $S_1$ could be applied at that pair and $P(\{0,v\},1)$ at the other two.  All ways of arranging three pairs which are distance 2 completely disconnected, but distance 3 connected were enumerated, and all assignments of signs producing a configuration in $C^2(\sT)$ were checked, none attains the optimum.

 \subsubsection*{Case of 2 connected components}
 
 It remains to consider the case of two connected components, and components with a node of height 2. First the case of two connected components with no nodes of height 2 is considered, and these are categorized by the size of the largest component.  
 
 By exhaustively checking with $P(S, 1)$, there does not exist a component of size at least 7 which can be paired with a second component.  A component of size 6 must be paired with a component of even size.  There are no components $S$ of size 6 for which $P(S, 1) + P(\{0, v_1\}, 1) < 6$ and hence a component of size 6 may only be paired with a pair of adjacent vertices.  The only configuration of size 6 with $P(S, 1) + P(\{0,v\},1) <6$ has shape given in Figure \ref{fig:hexagon}.
 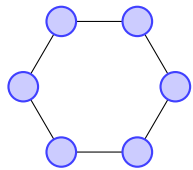
\begin{figure}
   \begin{tikzpicture}[node distance = 1.3cm, auto, place/.style = {circle, 
   thick, draw=blue!75, fill=blue!20}]
  \draw (0, 0)--(1, 0);
  \draw (0, 0)--(-.5, .86602540378);
  \draw (-.5, .86602540378)--(0, 1.73205080757);
  \draw (0, 1.73205080757)--(1, 1.73205080757);
  \draw (1, 1.73205080757)--(1.5, .86602540378);
  \draw (1.5, .86602540378)--(1,0);
   \node[place] at (0,0)    {};
   \node[place] at (1,0)    {};
   \node[place] at (-.5, .86602540378)    {};
   \node[place] at (1.5, .86602540378)   {};
   \node[place] at (0, 1.73205080757)    {};
   \node[place] at (1, 1.73205080757)    {};
 \end{tikzpicture}
 \caption{The only configuration on six nodes which can be paired with a pair of adjacent nodes.}
 \label{fig:hexagon}\end{figure}

 If the pair of points have the same sign, then the number of points on the hexagon with positive and negative signs is unequal, and thus there are adjacent points on the hexagon with the same sign.  Thus, applying $Q(\{0,v\}, 1) \geq 3.79$ twice shows that this configuration is not the minimum.  Hence the pair of vertices have opposite signs and so there are equal positive and negative signs on the hexagon.  The configuration in which positive and negative signs alternate on the hexagon is in $C^2(\sT)$ itself, so cannot be paired with the pair and remain in $C^2(\sT)$.  Any other configuration is found by flipping an equal number of positive and negative signs and hence has a signed sum which, if non-zero, is in the lattice generated by $\{2v, 2v_1, 2v_2\}$, and hence still is not equal to the sum of the pair of adjacent vertices with opposite sign.
 
 There are no pairs of components $S_1, S_2$ with $S_1$ of size 3 and $S_2$ of size 5 such that $P(S_1, 1) + P(S_2, 1) < 6$. Note that an assignment of signs to the nodes in a size 5 configuration determines the location of a singleton which may be added to form a configuration in $C^2(\sT)$.  All such configurations were tested, and none gives the optimum. 
 
 There are no pairs $S_1, S_2$, both of size 4, for which $P(S_1, 1) + P(S_2, 1) < 6$. Lemma  \ref{one_node_expansion_boundary} is used to control pairs in which one component has size 4 and the other has size 2.
 It was verified in SciPy that there is no configuration $S$ of size 4 such that $P(S,1) + 2.99 < 6$.  Combined with Lemma \ref{one_node_expansion_boundary} this proves that a component of size 2 has distance exactly 3 from a component of size 4.  All size four components whose $P$ value can be combined with the $P$ value of either a pair of adjacent nodes, or a pair of nodes at distance 2 from each other were enumerated.  All ways of combining the size 4 and 2 components at distance 3 were enumerated, and all ways of assigning signs to the vertices to obtain a configuration in $C^2(\sT)$ were enumerated.  None of these achieved the optimum.

 Among those configurations on 3 nodes, only the configuration given up to symmetry by $\{0, v, v_1\}$ has $P$ value less than 3, and hence, if two components of size 3 are combined, one has shape $\{0, v, v_1\}$.  Recall that if $\nu_0 = \nu_{v_1} = 1$, $\nu_v = -1$ then $Q(\{0, v, v_1\}, \nu) \geq 2.59$ and $Q(\{0, v\}, 1) \geq 3.79$. Thus neither component can have adjacent nodes of the same sign. Up to symmetry, there are four configurations with which $\{0, v, v_1\}$ can be paired.  These are pictured in Figure \ref{fig:size_3_components}. The first, second and fourth configurations in the figure have adjacent nodes which receive opposite signs, so that the signed sum of the vertices has distance one from the remaining node of the configuration (up to sign).  To be paired with $\{0, v, v_1\}$ this places one node of the configuration on the hexagon that has $\{0, v, v_1\}$ as vertices, but then this node does not have distance 3 from $\{0, v, v_1\}$.  This eliminates all but the third configuration. Let $\nu_0 = 1$, $\nu_{v_1} = \nu_{v_2} = 1$.  It was verified in SciPy that $Q(\{0, v_1, v_2\}, \nu) \geq 3.84$, which rules out the third configuration.
 
  \begin{figure}
   \begin{tikzpicture}[node distance = 1.3cm, auto, place/.style = {circle, 
   thick, draw=blue!75, fill=blue!20}]
  \draw (0, 0)--(1, 0);
  \draw (1,0) -- (1.5, - .86602540378);
  \node [place] at (1,0) (lower) [label = $v$] {};
  \node [place] at (1.5, -.86602540378) (lower) [label = $v_1$] {};
   \node[place] at (0,0)  (lower)[label=$0$]  {};
 \end{tikzpicture} \;\;
    \begin{tikzpicture}[node distance = 1.3cm, auto, place/.style = {circle, 
    thick, draw=blue!75, fill=blue!20}]
  \draw (0, 0)--(1, 0);
  \draw (1,0) -- (1.5, - .86602540378);
  \draw (1.5, -.86602540378)--(2.5, -.86602540378);
  \node [place] at (1,0) (lower) [label = $v$] {};
  \node [place] at (2.5, -.86602540378) (lower) [label = $v_1+v$] {};
   \node[place] at (0,0)  (lower)[label=$0$]  {};
 \end{tikzpicture}
     \begin{tikzpicture}[node distance = 1.3cm, auto, place/.style = {circle, 
     thick, draw=blue!75, fill=blue!20}]
  \draw (0, 0)--(1, 0);
  \draw (1,0) -- (1.5, - .86602540378);
  \draw (1,0) -- (1.5,  .86602540378);
  \node [place] at (1.5, .86602540378) (lower) [label = $v_2$] {};
  \node [place] at (1.5, -.86602540378) (lower) [label = $v_1$] {};
   \node[place] at (0,0)  (lower)[label=$0$]  {};
 \end{tikzpicture}\;\;
      \begin{tikzpicture}[node distance = 1.3cm, auto, place/.style = {circle, 
      thick, draw=blue!75, fill=blue!20}]
  \draw (0, 0)--(1, 0);
  \draw (1,0) -- (1.5, - .86602540378);
  \draw (1,0) -- (1.5,  .86602540378);
  \draw (1.5, .86602540378) -- (2.5, .86602540378);
  \node [place] at (2.5, .86602540378) (lower) [label = $v_2+v$] {};
  \node [place] at (1.5, -.86602540378) (lower) [label = $v_1$] {};
   \node[place] at (1,0)  (lower)[label=$v$]  {};
 \end{tikzpicture}
\caption{Up to symmetry, all components which can appear as the the second size 3 component in a configuration are shown.  Adjacent nodes must have opposing signs.}\label{fig:size_3_components}
 \end{figure}
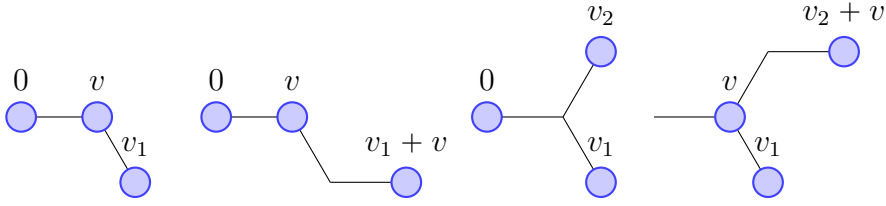
 
If signs are assigned to a configuration of size 3 which is to be paired with a singleton, to reach a configuration in $C^2(\sT)$, two of the nodes in the size 3 configuration have one sign, and the third has the other, so that there are nodes at distance at most 2 with opposite signs.  The singleton would then have the same distance from the third node, and hence be connected to the component of size 3.  This rules out a singleton and a component of size 3.  
 
 This reduces to the case of two configurations both of size 2, which necessarily are both adjacent pairs, or pairs at distance two, and necessarily are pairs with opposing signs in order to be in $C^2(\sT)$.

A consequence of Lemma \ref{one_node_expansion_boundary} is that, if the configuration consists of two pairs of adjacent nodes, then their distance is less than 5, since otherwise after a translation, $Q(S_1, \nu)$ could be applied at each pair.  Assume that one pair has been translated to $0, v$.  Then the other pair has the form $w, w+v$ where $w$ is in the triangular lattice.  The distance between $\{w, w+v\}$ and $\{0, v\}$ is either twice the distance from 0 to $w$ in the triangular lattice, or one less, and hence the distance in the triangular lattice from 0 to $w$ is 2.  There are 12 such choices, and each verified not to obtain the optimum.  It also follows from the lemma that if there are two pairs of nodes which have distance 1 from each other, then the distance between the pairs is less than 4, and hence equal to 3, since otherwise $Q(S_2, \nu)$ could be applied at one pair, and $P(\{0, v_1\}, 1)$ at the other.  The pairs were enumerated, and none obtains the optimum.

\subsubsection*{Case of a single node of height 2.}  \begin{lemma}
  The optimization problem has value $Q(\{0, v\}, 2) \geq 12$.  For $\nu_0= 2$, $\nu_v = 1$, $Q(\{0, v\}, \nu) \geq 7.8$.
 \end{lemma}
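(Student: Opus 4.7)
The distance-$1$ enlargement of $\{0,v\}$ in the honeycomb tiling consists of the six vertices $\{0,v,v_1,v_2,v-v_1,v-v_2\}$, and the two equality constraints of the program $Q(\{0,v\},\nu)$ read
\begin{align*}
3x_0 - x_v - x_{v-v_1} - x_{v-v_2} &= \nu_0,\\
3x_v - x_0 - x_{v_1} - x_{v_2} &= \nu_v,
\end{align*}
with $|x_w|\leq \tfrac12$ at each site.

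For the first statement, I would add the two constraints with $\nu_0=\nu_v=2$ to obtain
\[
2x_0 + 2x_v - (x_{v-v_1}+x_{v-v_2}+x_{v_1}+x_{v_2}) = 4.
\]
The left-hand side is bounded above by $2\cdot\tfrac12 + 2\cdot\tfrac12 + 4\cdot\tfrac12 = 4$, with equality forcing $x_0 = x_v = \tfrac12$ and each of the other four variables equal to $-\tfrac12$. The feasible set of the program is therefore a single point (which one immediately checks satisfies both original constraints), and each of the six terms $1-c(x_w)$ takes value $2$, so the value of $Q(\{0,v\},2)$ equals $12$.

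For the second statement, with $\nu_0=2$ and $\nu_v=1$, the analogous summation yields
\[
2x_0 + 2x_v - (x_{v-v_1}+x_{v-v_2}+x_{v_1}+x_{v_2}) = 3,
\]
with slack $1$, so the feasible set has positive dimension. I would reduce to a two-variable problem by pairwise elimination of the neighbors of $0$ and of $v$: once $a=x_0$ and $b=x_v$ are fixed, the pair $(x_{v-v_1},x_{v-v_2})$ is constrained only by $x_{v-v_1}+x_{v-v_2}=3a-b-2$ and the box condition, and similarly $x_{v_1}+x_{v_2}=3b-a-1$. Using the product-to-sum identity $(1-c(u))+(1-c(w))=2-2\cos(\pi(u+w))\cos(\pi(u-w))$, the minimum of each pair sum is an explicit function $g(s)$ of the sum $s$, equal to $2(1-\cos\pi s)$ when $|s|\leq\tfrac12$ and to $2+2\cos^2\pi s$ when $|s|>\tfrac12$. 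This reduces the program to minimizing the two-variable function $T(a,b)=(1-c(a))+(1-c(b))+g(3a-b-2)+g(3b-a-1)$ on the bounded region cut out by the feasibility constraints $|3a-b-2|\leq 1$, $|3b-a-1|\leq 1$, $|a|,|b|\leq \tfrac12$.

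The main obstacle is that $T$ is not globally convex: $1-c$ is concave on $[\tfrac14,\tfrac12]$, and $g$ changes analytic form at $|s|=\tfrac12$. I would split the feasible $(a,b)$-rectangle into finitely many subrectangles on which the regime is constant, subdivide the intervals $|a|,|b|\in[\tfrac14,\tfrac12]$ and $|s|\in[\tfrac14,\tfrac12]$ further, and on each cell replace the objective by the piecewise linear lower bound of the scheme $P_j,Q_j$ developed earlier in this section. The candidate configuration $a=b=\tfrac12$ with $x_{v-v_1}=x_{v-v_2}=-\tfrac12$ and $x_{v_1}=x_{v_2}=0$ gives $T=8$, so the claimed bound $7.8$ leaves ample room for the relaxation, and the finite cell-wise verification is carried out in SciPy in the same manner as the other optimization lemmas of this section.
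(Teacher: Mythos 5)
Your proposal is correct. The first claim is handled by a clean exact argument: summing the two equality constraints of $Q(\{0,v\},2)$ gives $2x_0+2x_v-(x_{v_1}+x_{v_2}+x_{v-v_1}+x_{v-v_2})=4$, whose left side is bounded by $4$ with equality only at the corner $x_0=x_v=\tfrac12$, others $-\tfrac12$; that corner satisfies the two original equalities, so the feasible set is a singleton and $Q(\{0,v\},2)=12$ exactly. This is a genuine improvement over the paper's proof, which simply defers to SciPy; you obtain a hand-verifiable exact value rather than a numerical lower bound, at no extra cost.

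For the second claim your sum-to-product reduction $g(s)=2-2\cos\pi s$ for $|s|\le\tfrac12$, $g(s)=2+2\cos^2\pi s$ for $|s|>\tfrac12$ is correct (the extremal $u-w$ under $u+w=s$, $|u|,|w|\le\tfrac12$ is $0$ or $\pm(1-|s|)$ depending on the sign of $\cos\pi s$), and the reduction to a two-variable program in $(a,b)=(x_0,x_v)$ over the polytope $\{|a|,|b|\le\tfrac12,\ |3a-b-2|\le 1,\ |3b-a-1|\le 1\}$ is valid. However, in the end you still invoke a SciPy cell-by-cell verification to certify the bound $\ge 7.8$ on $T(a,b)$, which is exactly what the paper does for the original six-variable program. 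So for this half the two proofs are essentially the same numerical check, yours just operating on a smaller reduced problem, which is a minor practical convenience rather than a conceptually different argument. The candidate point you exhibit ($T=8$) is a useful sanity check but is not itself a proof of the lower bound; be explicit that the $7.8$ bound comes from the piecewise-linear relaxation on the cells, not from the value at the candidate.
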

\begin{proof}
 This was checked in SciPy.
\end{proof}
It follows that if there is a node of height 2, any node appearing adjacent to it appears with the opposite sign.

Recall that $P(\{0\}, 2) + 2 P(\{0\},1) > 3.5 + 2 \times 1.35 > 6$, and hence if there is a component with a node of height 2, there are at most 2 connected components.  A list of all connected components having a single node of height 2, and $P$ value at most 6 was constructed. The largest components in this list had size 7.  First consider the case of two connected components, one of which has a node of height 2. By examining the list of connected components, all of whose nodes have height 1, the minimum $P$ value of a component with at least 3 nodes is at least $2.59$.  Since $P(\{0\}, 2) = 3.5$, it follows that the component with heights 1 is either a singleton or a pair of nodes which are adjacent, or at distance 1.  The $P$ values of these configurations are at least 1.35, 1.87 and 2.59. The case of $2.59$ may be ruled out by comparing with $\gamma_{\hex}$.  By first reducing the configurations to those with $P$ value at most $6-1.35 = 4.65$ it follows that the  component containing a node of height 2 has size at most 5. Furthermore, the component of size 5 must be paired with a component of even size, and no component of size 5 has $P$ value that meets this requirement.  The case that the component containing a 2 has size 1 or 2 can be ruled out, since the resulting configuration cannot be made in $C^2(\sT)$.  This reduces to the cases of a component of size 3 or 4. The only configuration of size 3 which may be paired with a pair of adjacent nodes has shape 

\begin{figure}
   \begin{tikzpicture}[node distance = 1.3cm, auto, place/.style = {circle, 
   thick, draw=blue!75, fill=blue!20}]
  \draw (0, 1.73205080757)--(-.5, .86602540378);
  \draw (0, 1.73205080757)--(1, 1.73205080757);
   \node[place] at (-.5, .86602540378)  (lower)[label=$-1$]  {};
   \node[place] at (0, 1.73205080757) (lower)[label = $2$]   {};
   \node[place] at (1, 1.73205080757) (lower)[label = $-1$]   {};
 \end{tikzpicture}
 \begin{tikzpicture}[node distance = 1.3cm, auto, place/.style = {circle, 
 thick, draw=blue!75, fill=blue!20}]
  \draw(0,0)--(-.5, .86602540378);
  \node[place] at (-.5, .86602540378) (lower) [label=$-1$] {};
  \node[place] at (0,0) (lower) [label=$1$] {};
 \end{tikzpicture}
\caption{A size three component which may be paired with a pair of adjacent vertices has the configuration shown.}\label{fig:size_3_2_component}
 \end{figure}
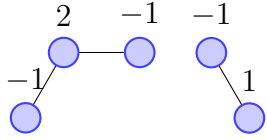
 
For $\nu_0 = 2$, $\nu_{v} = \nu_{-v_1+v}=1$, $P(\{0, v, v-v_1\} , \nu) \geq P(\{0\}, 2) \geq 3.5$.   
To be in $C^2(\sT)$, this configuration must be paired with a configuration which is a translate of the one pictured in Figure \ref{fig:size_3_2_component}.  By applying Lemma \ref{one_node_expansion_boundary} to the pair of adjacent nodes, it follows that this pair has distance exactly 3 from the first configuration, since if the pair was at a greater distance than 3,  $Q(S_1, \nu) \geq 2.99$ could be applied at the pair.  By subtracting the graph Laplacian at 0 from the height 2 configuration, obtain a second pair of adjacent nodes with opposing signs.  The resulting $\xi$ may no longer be bounded in sup by $\frac{1}{2}$, but its $f$ value is unchanged, and the distance between the two pairs is now changed by at most 1, hence is 2, 3 or 4.  The value of $f(\xi)$ for all such pairs has been estimated, and none gives the optimum.

This reduces to the case of adding a singleton to a configuration of size 4.  There are four configurations of size 4 whose $P$ value may be paired with a singleton.  Many of the signs of the vertices are determined by the fact that a node of value 2 may not appear next to a node of the same sign.  The configurations are presented in Figure \ref{fig:epsilon_configurations}, with $\epsilon$ representing an ambiguous sign.

\begin{figure}
   \begin{tikzpicture}[node distance = 1.3cm, auto, place/.style = {circle, 
   thick, draw=blue!75, fill=blue!20}]
  \draw (0, 1.73205080757)--(-.5, .86602540378);
  \draw(0, 2*.86602540378) -- (-.5, 3* .86602540378);
  \draw(-.5, 3*.86602540378) -- (-1.5, 3*.86602540378);
   \node[place] at (-.5, .86602540378)  (lower)[label=$-1$]  {};
   \node[place] at (0, 1.73205080757) (lower)[label = $2$]   {};
   \node[place] at (-.5, 3*.86602540378) (lower)[label = $-1$]   {};
   \node[place] at (-1.5, 3*.86602540378) (lower) [label = $\epsilon$] {};
 \end{tikzpicture}\;\;
    \begin{tikzpicture}[node distance = 1.3cm, auto, place/.style = {circle, 
    thick, draw=blue!75, fill=blue!20}]
  \draw (0, 1.73205080757)--(1, 2*.86602540378);
  \draw(0, 2*.86602540378) -- (-.5, 3* .86602540378);
  \draw(-.5, 3*.86602540378) -- (-1.5, 3*.86602540378);
   \node[place] at (1, 2*.86602540378)  (lower)[label=$-1$]  {};
   \node[place] at (0, 1.73205080757) (lower)[label = $2$]   {};
   \node[place] at (-.5, 3*.86602540378) (lower)[label = $-1$]   {};
   \node[place] at (-1.5, 3*.86602540378) (lower) [label = $\epsilon$] {};
 \end{tikzpicture}\;\;
    \begin{tikzpicture}[node distance = 1.3cm, auto, place/.style = {circle, 
    thick, draw=blue!75, fill=blue!20}]
  \draw (0, 1.73205080757)--(-.5, .86602540378);
  \draw(0, 2*.86602540378) -- (-.5, 3* .86602540378);
  \draw(-.5, 3*.86602540378) -- (-1.5, 3*.86602540378);
  \draw(0, 2*.86602540378) --(1,2*.86602540378);
   \node[place] at (-.5, .86602540378)  (lower)[label=$-1$]  {};
   \node[place] at (0, 1.73205080757) (lower)[label = $2$]   {};
   \node[place] at (1,2* .86602540378) (lower)[label = $-1$]   {};
   \node[place] at (-1.5, 3*.86602540378) (lower) [label = $\epsilon$] {};
 \end{tikzpicture}
 
    \begin{tikzpicture}[node distance = 1.3cm, auto, place/.style = {circle, 
    thick, draw=blue!75, fill=blue!20}]
  \draw (0, 1.73205080757)--(-.5, .86602540378);
  \draw(0, 2*.86602540378) -- (-.5, 3* .86602540378);
  \draw(0, 2*.86602540378) --(1,2*.86602540378);
   \node[place] at (-.5, .86602540378)  (lower)[label=$-1$]  {};
   \node[place] at (0, 1.73205080757) (lower)[label = $2$]   {};
   \node[place] at (1,2* .86602540378) (lower)[label = $-1$]   {};
   \node[place] at (-.5, 3*.86602540378) (lower) [label = $-1$] {};
 \end{tikzpicture} 
 \caption{The size four configurations which have a node of height 2 and which can be paired with a singleton are pictured.  Adjacent nodes have opposite signs.  Placing 2 at position 0, the value of $\epsilon$ is forced to be 1 so that the singleton has sign $-1$ for the configuration to be mean 0, since otherwise its moment is not in the triangular lattice.}\label{fig:epsilon_configurations}
 \end{figure}
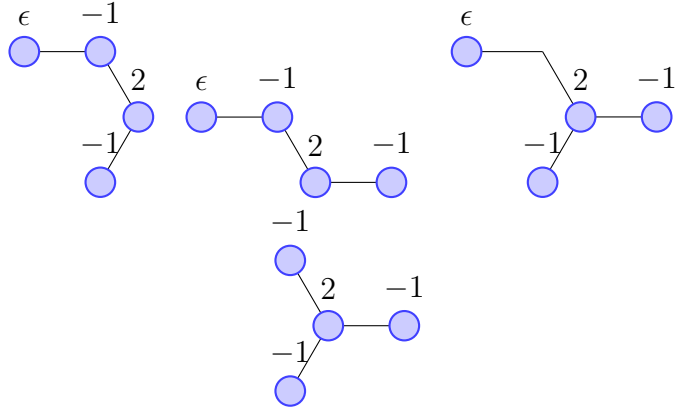

 After translation, assume the node of value 2 appears at 0.  The last figure may be ruled out, since to be mean 0, the singleton would have to appear at 0. In the remaining figures, both the node labeled 2 and the node labeled $\epsilon$ appear in the triangular lattice generated by $v_1, v_2$.  Thus to be mean 0, the singleton must appear translated by $v$ from the triangular lattice, with sign $-1$, and hence $\epsilon = 1$.  In each case, the mean 0 condition forces the singleton to be placed at distance 1 from the node labeled $\epsilon$, so none of these configurations obtains two connected components.  
 
 This reduces to the case of a single connected component. There is, up to symmetry, a single connected component on 3 nodes with value 2 at 0 and in $C^2(\sT)$, which has value $-1$ at $\pm v_1$. It was verified in SciPy that for $\nu_0 = 2$, $\nu_{\pm v_1} = -1$, $Q(\{0, \pm v_1\}, \nu) \geq 6.39$, and hence this configuration is not the optimum. The assignment of signs for the remaining configurations was restricted by requiring the node of height 2 to have value 2, and any adjacent nodes to have value $-1$.  There were 7 configurations on 5 nodes which have $P$ value at most 6 and with sign assignment in $C^2(\sT)$.  One of these was equivalent to the optimal configuration on six nodes arranged around a hexagon after subtracting the graph Laplacian at the point of height 2, the others were evaluated and do not give the optimum.  None of the configurations on 7 nodes could be given a sign assignment in $C^2(\sT)$.

\subsubsection*{ Case of two adjacent nodes of height 2.}  
  All configurations with two adjacent nodes of height 2 and all other nodes of height 1 which have $P$ value at most $6$ were enumerated.  All such configurations had at most 6 nodes.  No configurations with 4 or more nodes could be paired with a second connected component.  The only configuration on three nodes which could be paired with a singleton was, up to symmetry, given by the node 0 and two of its neighbors. By the above observation, after a sign change, one can assume $\nu_0 = 2$, $\nu_v = -2$, $\nu_{-v_1 + v} = -1$.  For this choice $P(\{0, v, -v_1+v\}, \nu) \geq 4.32$.  It follows that this configuration can only be paired with a singleton, which would have a forced location at distance less than 3 from the configuration (the location is forced to be $v_2$).  If the connected component has $\nu_0 = 2$, $\nu_v = -2$, then a second connected component cannot have nodes at distance 2 or more from each other, since $P(\{0, v\}, 2) + P(\{0, v_1\},1)\geq 4 + 2.59 > 6$, hence is either a pair of adjacent nodes, or a singleton.  No assignment of signs makes such a configuration in $C^2(\sT)$.  This reduces to the case of a single connected component.  Each of those configurations with a $P$ value at most 6 was tested for an assignment of signs that made the configuration in $C^2(\sT)$.  The only surviving configuration had 4 nodes and  the assignment shown in Figure \ref{fig:2_twos}.
 \begin{figure}
   \begin{tikzpicture}[node distance = 1.3cm, auto, place/.style = {circle, 
   thick, draw=blue!75, fill=blue!20}]
  \draw (0, 1.73205080757)--(-.5, .86602540378);
  \draw (0, 1.73205080757)--(1, 1.73205080757);
  \draw (1, 1.73205080757)--(1.5, .86602540378);
   \node[place] at (-.5, .86602540378)  (lower)[label=$-1$]  {};
   \node[place] at (1.5, .86602540378) (lower)[label=$1$]  {};
   \node[place] at (0, 1.73205080757) (lower)[label = $2$]   {};
   \node[place] at (1, 1.73205080757) (lower)[label = $-2$]   {};
 \end{tikzpicture}
 \caption{The only surviving configuration with two 2's on four nodes.  This did not achieve the optimum.}
 \label{fig:2_twos}
 \end{figure}
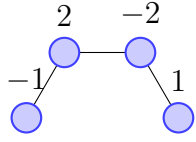
 This did not achieve the optimum.

\subsubsection{Face centered cubic lattice case} \label{fcc_section}

The optimum is shown to be achieved by $\nu^* = \delta_0 - \delta_{v_1}$, $\xi^* = g* \nu^*$ with $\|\xi^*\|_2^2 = 0.01867(5)$. The value $\gamma_{\fcc} = f(\xi^*) = 0.3623(9)$ was calculated by applying Lemma \ref{f_approx_lemma} with \begin{equation}R = \{n_1 v_1 + n_2 v_2 +n_3 v_3: |n_1|, |n_2|, |n_3| \leq 5\}.\end{equation}

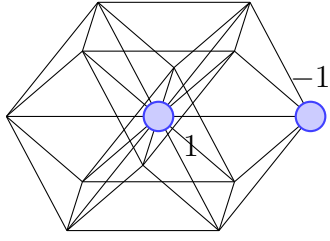
\begin{figure}
\begin{tikzpicture}[scale = 2, node distance = 1.3cm, auto, place/.style = {circle, 
thick, draw=blue!75, fill=blue!20}]
\draw (0,0)--(1,0);
\draw (0,0)--(0.5,0.433012701892219);
\draw (0,0)--(0.602062072615966,0.756710002993201);
\draw (0,0)--(-1,0);
\draw (0,0)--(-0.5,-0.433012701892219);
\draw (0,0)--(-0.602062072615966,-0.756710002993201);
\draw (0,0)--(0.5,-0.433012701892219);
\draw (0,0)--(0.397937927384034,-0.756710002993201);
\draw (0,0)--(-0.102062072615966,-0.323697301100982);
\draw (0,0)--(-0.5,0.433012701892219);
\draw (0,0)--(-0.397937927384034,0.756710002993201);
\draw (0,0)--(0.102062072615966,0.323697301100982);
\draw (1,0)--(0.5,0.433012701892219);
\draw (1,0)--(0.602062072615966,0.756710002993201);
\draw (1,0)--(0.5,-0.433012701892219);
\draw (1,0)--(0.397937927384034,-0.756710002993201);
\draw (0.5,0.433012701892219)--(0.602062072615966,0.756710002993201);
\draw (0.5,0.433012701892219)--(-0.5,0.433012701892219);
\draw (0.5,0.433012701892219)--(-0.102062072615966,-0.323697301100982);
\draw (0.602062072615966,0.756710002993201)--(0.102062072615966,0.323697301100982);
\draw (0.602062072615966,0.756710002993201)--(-0.397937927384034,0.756710002993201);
\draw (0.102062072615966,0.323697301100982)--(-0.397937927384034,0.756710002993201);
\draw (-1,0)--(-0.5,-0.433012701892219);
\draw (-1,0)--(-0.602062072615966,-0.756710002993201);
\draw (-1,0)--(-0.5,0.433012701892219);
\draw (-1,0)--(-0.397937927384034,0.756710002993201);
\draw (-0.5,-0.433012701892219)--(-0.602062072615966,-0.756710002993201);
\draw (-0.5,-0.433012701892219)--(0.5,-0.433012701892219);
\draw (-0.5,-0.433012701892219)--(0.102062072615966,0.323697301100982);
\draw (-0.602062072615966,-0.756710002993201)--(-0.102062072615966,-0.323697301100982);
\draw (-0.602062072615966,-0.756710002993201)--(0.397937927384034,-0.756710002993201);
\draw (-0.102062072615966,-0.323697301100982)--(0.397937927384034,-0.756710002993201);
\draw (0.5,-0.433012701892219)--(0.397937927384034,-0.756710002993201);
\draw (-0.5,0.433012701892219)--(-0.397937927384034,0.756710002993201);
\draw (0.5,-0.433012701892219)--(0.102062072615966,0.323697301100982);
\draw (-0.5,0.433012701892219)--(-0.102062072615966,-0.323697301100982);
\node[place] at (0,0)  (lower)[label=330:$1$]  {};
\node[place] at (1,0)  (lower)[label=$-1$]  {};

\end{tikzpicture}
\caption{The extremal configuration for the face centered cubic lattice.}\label{fig:fcc}
\end{figure}
It is more convenient to work with $\|\xi\|_2^2$ than $f(\xi)$.  By Lemma \ref{2_norm_lemma}, if $\|\xi\|_2^2 \geq \alpha$ with \begin{equation} 2\pi^2 \alpha\left(1 -\frac{\pi^2}{3}\alpha\right) > \gamma_{\fcc}, \qquad  \alpha = 0.01963\end{equation} then $f(\xi) >\gamma_{\fcc}$.

 Let $\xi$ be harmonic modulo 1, $\|\xi\|_\infty \leq \frac{1}{2}$. Let $\Delta \xi = \nu$. 
 \begin{lemma}
  If $\|\nu\|_{\infty} \geq 2$ then $\|\xi\|_2^2 \geq \frac{4}{12 \cdot 13} > 0.025 > \alpha$.
 \end{lemma}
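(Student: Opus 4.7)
The plan is to apply Lemma \ref{2_norm_opt_lemma} at a vertex $v$ where $|\nu_v| \geq 2$, combined with a scaling argument. Since $\xi = g*\nu$ satisfies the constraint $\Delta \xi = \nu$, in particular at $v$ we have $(\deg v) \xi_v - \sum_{(v,w) \in E} \xi_w = \nu_v$, so $\xi$ restricted to $\{v\} \cup \{w : d(w,v)=1\}$ is a feasible point of the program $Q'(\{v\}, \nu)$. Hence
\[
\|\xi\|_2^2 \geq \sum_{d(w,v) \leq 1} \xi_w^2 \geq Q'(\{v\}, \nu).
\]

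The key observation is a homogeneity property of $Q'$. The FCC lattice is 12-regular with no self-loops and all neighbors distinct, so Lemma \ref{2_norm_opt_lemma} applies and gives $Q'(\{v\}, \nu) = \frac{1}{12 \cdot 13}$ when $|\nu_v| = 1$. When $|\nu_v| = c$, substituting $x_w = c\, y_w$ transforms the constraint into the unit constraint $(\deg v) y_v - \sum y_w = \sgn(\nu_v)$, while rescaling the quadratic objective by $c^2$. Thus $Q'(\{v\}, \nu) = \frac{\nu_v^2}{12\cdot 13}$ on the FCC lattice.

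Putting these together, if $\|\nu\|_\infty \geq 2$ then at the offending vertex $v$ we have
\[
\|\xi\|_2^2 \geq \frac{\nu_v^2}{12 \cdot 13} \geq \frac{4}{12 \cdot 13} = \frac{1}{39},
\]
which exceeds $0.025 > \alpha = 0.01963$, as claimed. There is no serious obstacle here; the only thing to check carefully is the scaling of $Q'$ in $|\nu_v|$, which is immediate from the quadratic objective and linear constraint. Combined with Lemma \ref{2_norm_lemma}, this bound forces $f(\xi) > \gamma_{\fcc}$, so no prevector with a node of height at least 2 can be optimal, reducing the remainder of the search to prevectors of sup norm one.
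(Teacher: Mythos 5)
Your proof is correct and follows essentially the same approach as the paper: apply Lemma \ref{2_norm_opt_lemma} at a vertex with $|\nu_v| \geq 2$ and use quadratic scaling of $Q'$ in the prevector height. The one small point worth flagging is that the scaling identity $Q'(\{v\},\nu) = \nu_v^2/(12\cdot 13)$ requires the unconstrained optimizer to remain within the box $|x_w| \leq \tfrac{1}{2}$ after rescaling (the paper signals this with ``within the interior of the domain''); here the inequality direction is in your favor regardless, since tightening the box only raises the minimum, so the lower bound is valid.
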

\begin{proof}
 By Lemma \ref{2_norm_opt_lemma}, since $\deg(0) = 12$, $P'(\{0\}, 1) = \frac{1}{12 \cdot 13}$.  Within the interior of the domain, the objective function scales quadraticly, and hence $P'(\{0\}, 2) = \frac{4}{12 \cdot 13}$. Applying this translated to node $x$ where $|\nu_x| \geq 2$ implies the claim.
\end{proof}

Since the Green's function on a three dimensional lattice is not in $\ell^2$, it follows that the optimal $\nu$ is in $C^1(\sT)$, and hence has the same number of nodes with values $1$ and $-1$. 

\begin{lemma}
Suppose $|\supp \nu| \geq 4$ and let $z_1, z_2, z_3, z_4$ be four points in the support, two each with value $1, -1$.  The optimization problem $Q'(\{z_1, z_2, z_3, z_4\}, \nu)$ has value at least $\frac{2}{99} > \alpha$.  
 \end{lemma}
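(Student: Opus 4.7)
The plan is to exploit the additivity property of $Q'$ combined with the single-point lower bound from Lemma \ref{2_norm_opt_lemma}: since the fcc lattice is 12-regular, $Q'(\{z\}, \nu) \geq \frac{1}{12 \cdot 13} = \frac{1}{156}$ at any vertex $z$ where $|\nu_z| = 1$. The strategy is to split into cases based on how close the four points $z_1, z_2, z_3, z_4$ sit in the fcc adjacency graph, handling the far-apart case by pure additivity and the clustered cases by direct analysis of two- and three-point subprograms via Lagrange multipliers.

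First I would establish the easy case: if the four points have pairwise graph distance at least $3$, then the sets $B(\{z_i\}) = \{w : d(w, z_i) \leq 1\}$ are pairwise disjoint, so the additivity lemma gives
\[
Q'(\{z_1, z_2, z_3, z_4\}, \nu) \;\geq\; \sum_{i=1}^4 Q'(\{z_i\}, \nu) \;\geq\; \frac{4}{156} = \frac{1}{39} \;>\; \frac{2}{99}.
\]
It therefore suffices to assume that at least one pair, say $z_1, z_2$, satisfies $d(z_1, z_2) \leq 2$, and to analyze the resulting geometry.

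Next I would treat the two-point program for an adjacent pair $\{u,v\}$ with $\nu_u = \pm 1$, $\nu_v = \pm 1$. Using the fcc neighbor list, a direct check shows any two adjacent vertices share exactly $t=4$ common neighbors. Setting $x_u = a$, $x_v = -a$ (opposite-sign case) and letting the non-shared neighbors of $u$ (resp.\ $v$) take value $-b$ (resp.\ $b$) by symmetry, Lagrange multipliers minimize $2a^2 + 2(11-t)b^2$ subject to $13a + (11-t)b = 1$, producing the value $\frac{2}{180 - t} = \frac{1}{88}$; the same-sign case yields a strictly larger value by a parallel computation. An analogous computation handles pairs at distance $2$ (which share a different number of common neighbors, again enumerable from the fcc structure). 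Combining any such two-point bound with the single-point bound $\frac{1}{156}$ applied at each of the two remaining points that are far from the pair yields at least $\frac{1}{88} + \frac{2}{156} > \frac{2}{99}$, which disposes of the case where only one pair of the four points is close.

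The remaining cases are those in which three or four of the $z_i$ cluster into a single distance-$2$ component. Here I would enumerate — up to the symmetry group of the fcc lattice — the finitely many such relative configurations (the diameter-$2$ neighborhood of a vertex contains boundedly many points, and cliques in fcc are at most tetrahedral), and for each arrangement together with each admissible sign assignment compatible with $\nu_{z_i} \in \{+1,-1\}$ and two-of-each-sign, solve the resulting convex quadratic program either by Lagrange multipliers or by the SciPy routine described in the preceding subsection, verifying the lower bound $\frac{2}{99}$ in each instance. The main obstacle will be the bookkeeping in these tightly clustered configurations: a 4-point cluster can share neighbors in complicated overlapping patterns (triangles, tetrahedra, and shared second neighbors), and the quadratic form one optimizes no longer decouples by symmetry, so one must genuinely solve the full KKT system on the relevant local subgraph. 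Once those finitely many computations are verified, monotonicity of $Q'$ in the set $S$ then propagates the bound to any larger support containing the four chosen points, completing the proof.
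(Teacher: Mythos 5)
Your decomposition by spatial configuration is a genuinely different route from the paper, and the pieces you carry out explicitly check out: the fcc lattice does have $t=4$ common neighbors for adjacent vertices, the opposite-sign adjacent pair value $\frac{2}{180-t}=\frac{1}{88}$ is correct, and the additivity estimates $\frac{4}{156}>\frac{2}{99}$ and $\frac{1}{88}+\frac{2}{156}>\frac{2}{99}$ hold. The paper instead gives a single, uniform Lagrange-multiplier argument covering every configuration at once: writing the KKT system as $156(I+A)\lambda=\nu$ with $A_{ij}=\ell_i\cdot\ell_j/156$, it uses the fcc geometry (each $\ell_i\cdot\ell_j\in\{-20,0,1,2\}$, with $-20$ exactly for adjacent points sharing four common neighbors) to bound the off-diagonal entries, deduces that each optimal $\lambda_i$ has the sign of $\nu_{z_i}$, and then from $\sum_i a_i|\lambda_i|=4$ with $a_i=\|\ell_i\|^2+\sum_{j\neq i}\nu_{z_i}\nu_{z_j}\ell_i\cdot\ell_j\leq 156+20+20+2=198$ obtains $\|x\|_2^2=\sum_i|\lambda_i|\geq\frac{4}{198}=\frac{2}{99}$ directly. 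What this buys over your approach is precisely the part you flag as hard and leave to an unspecified SciPy enumeration: the tightly clustered case never has to be enumerated, and the bound is sharp exactly in that worst case (one vertex adjacent to both opposite-sign partners).

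There are also two concrete gaps in your case decomposition as written. First, you treat ``all four far apart,'' ``exactly one close pair with the other two isolated,'' and ``three or four points in one distance-$2$ cluster,'' but the case of two disjoint close pairs — $d(z_1,z_2)\leq 2$ and $d(z_3,z_4)\leq 2$ with the two pairs mutually far — falls through the cracks; it is harmless (additivity gives at least $\frac{2}{88}=\frac{1}{44}>\frac{2}{99}$) but needs to be stated. Second, in the one-close-pair case the invocation of additivity requires that the distance-$1$ enlargements $B(\{z_1,z_2\})$, $B(\{z_3\})$, $B(\{z_4\})$ be pairwise disjoint, i.e.\ that $z_3,z_4$ each lie at graph distance at least $3$ from the pair and from each other; you assert rather than arrange this, and if instead, say, $d(z_3,z_1)=2$, that configuration must be shunted into the cluster case rather than the two-singleton estimate. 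Neither issue is fatal, but a complete write-up in your style would need to close them, whereas the paper's matrix argument sidesteps the entire taxonomy.
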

 \begin{proof}
Let the linear constraints be written as $\ell_i \cdot x = \nu_{z_i}$. Thus $\ell_i$ has value 12 at $z_i$ and value $-1$ at each of the 12 neighbors of $z_i$.  The optimum can be assumed to  not be achieved on the boundary, since this would exceed the claimed bound. By Lagrange multipliers, at the optimum, for some scalars $\lambda_1, \lambda_2, \lambda_3, \lambda_4$, $x = \lambda_1 \ell_1 + \lambda_2 \ell_2 +\lambda_3 \ell_3 + \lambda_4 \ell_4$.  Note that $\|\ell_i\|_2^2 = 12\cdot 13 = 156$.  When $\nu_{z_i} \neq \nu_{z_j}$, the maximum value of $\ell_i \cdot \ell_j$ is achieved when $z_i$ and $z_j$ are adjacent.  As they have 4 common neighbors, this maximum value is 20.  When $\nu_{z_i}$ and $\nu_{z_j}$ have the same sign, the maximum value is achieved when they differ by a rotation of $v_1 + v_2$. This maximum value is 2. Similarly, the minimum value of $\ell_i \cdot \ell_j$ when $\nu_{z_i} \neq \nu_{z_j}$ is $-2$ and the minimum value of $\ell_i \cdot \ell_j$ when $\nu_{z_i} = \nu_{z_j}$ is $-20$.  Note that the constraints may be written as
\begin{equation}
 \ell_i^t (\lambda_1 \ell_1 + \lambda_2 \ell_2 + \lambda_3 \ell_3 + \lambda_4 \ell_4) = \nu_{z_i}
\end{equation}
or
\begin{equation}
 156(I+A) \begin{pmatrix} \lambda_1\\ \lambda_2\\ \lambda_3\\\lambda_4 \end{pmatrix} = \begin{pmatrix} \nu_{z_1}\\ \nu_{z_2}\\ \nu_{z_3}\\ \nu_{z_4} \end{pmatrix}
\end{equation}
where $A$ has zeros on the diagonal and has row sums bounded in size by $\frac{42}{156}$.  Let $\lambda_i' = 156 \lambda_i $ and rewrite this as
\begin{equation}
 A\begin{pmatrix} \lambda_1'\\ \lambda_2'\\ \lambda_3'\\\lambda_4' \end{pmatrix} = \begin{pmatrix} \nu_{z_1} - \lambda_1'\\ \nu_{z_2} - \lambda_2' \\ \nu_{z_3} - \lambda_3' \\ \nu_{z_4} -\lambda_4'\end{pmatrix}.
\end{equation}
Thus $\max(|\nu_{z_i} - \lambda_i'|) \leq \frac{42}{156} \max (|\lambda_i'|)$.  Since $|\nu_{z_i}| = 1$, it follows that $\max(|\lambda_i'|) \leq 2$, and, hence $\max(|\nu_{z_i} - \lambda_i'|) \leq \frac{84}{156} < 1$ so $\lambda_i$ and $\nu_{z_i}$ have the same sign.

Write
\begin{align*}
 (\nu_{z_1}\ell_1 + \nu_{z_2} \ell_2 + \nu_{z_3} \ell_3 + \nu_{z_4}\ell_4)^t (\lambda_1 \ell_1 + \lambda_2 \ell_2 + \lambda_3 \ell_3 + \lambda_4 \ell_4) &= 4,
\end{align*}
and, by expanding the inner product on the right, express this as
\begin{align*}&a_1 \nu_{z_1} \lambda_1 + a_2 \nu_{z_2} \lambda_2 + a_3 \nu_{z_3} \lambda_3 + a_4 \nu_{z_4}\lambda_4\\
 &= a_1 |\lambda_1| + a_2 |\lambda_2| + a_3 |\lambda_3| + a_4 |\lambda_4| = 4,
\end{align*}
where 
\begin{equation*}
 a_i = \|\ell_i\|_2^2 + \sum_{j \neq i} \nu_{z_i}\nu_{z_j} \ell_i \cdot \ell_j.
\end{equation*}
 By the above considerations, $156 - 42 = 114 \leq a_i \leq 156 + 42 = 198$.
Since
\begin{align*}
 \|x\|_2^2 &= \lambda_1 \nu_{z_1} + \lambda_2 \nu_{z_2} + \lambda_3 \nu_{z_3} + \lambda_4 \nu_{z_4}\\
 &= |\lambda_1| + |\lambda_2| + |\lambda_3| + |\lambda_4|
\end{align*}
it follows that $\|x\|_2^2 \geq \frac{4}{198} = 0.\overline{02}>\alpha.$ 
\end{proof}

It follows that the optimum has $|\supp \nu| = 2$.  The following lemma reduces the search to a finite search.
\begin{lemma}
 Let $|\nu_0| = 1$ and $\nu_w = 0$ for $w$ such that $1\leq d(w,0)\leq 2$.  Let $S = \{w: d(w,0) \leq 2\}$.  Then $Q'(S, \nu) \geq 0.0125$. 
\end{lemma}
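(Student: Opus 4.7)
The plan is to cast $Q'(S, \nu)$ as an explicit finite convex quadratic program and verify the bound, either by direct numerical solution or by exploiting the symmetry group of $\fcc$ fixing the origin. With $S = \{w : d(w,0) \leq 2\}$, the variables in $Q'(S,\nu)$ are $x_w$ for $w$ with $d(w,0) \leq 3$, subject to one nontrivial constraint $\Delta x(0) = \pm 1$ (taking $+1$ without loss of generality) and the remaining homogeneous constraints $\Delta x(u) = 0$ for $u$ at distance $1$ or $2$ from $0$.

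First, boundary cases are handled trivially: if any coordinate at the optimum satisfies $|x_w| = \tfrac{1}{2}$, then $x_w^2 = \tfrac{1}{4} > 0.0125$ and the bound holds immediately. Hence it suffices to treat the case where the minimum lies in the interior of the box, where by strict convexity of the objective it is the unique KKT point.

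The main step is to exploit the point-group symmetry of $\fcc$ that fixes $0$. By uniqueness of the interior optimum and the invariance of both the objective and the constraints under this group, the primal optimum and its Lagrange multipliers are also invariant. Thus, writing $x = \sum_{u \in S} \lambda_u \ell_u$ with $\ell_u$ the Laplacian row at $u$, and $M_{uv} = \ell_u \cdot \ell_v$, the system $M\lambda = \nu|_S$ reduces to a much smaller system whose dimension is the number of orbits of vertices in the distance-2 ball. The optimal value is then $\nu|_S \cdot \lambda = \lambda_0$.

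The main obstacle is the bookkeeping: correctly enumerating the orbit structure of $\fcc$ up to distance $2$ from $0$, partitioning the vertices accordingly, and computing the inner products $\ell_u \cdot \ell_v$ between representatives of each pair of orbits. Once these are tabulated, the reduced linear system is small enough to solve in exact arithmetic, yielding the precise value of $Q'(S,\nu)$ and certifying $\lambda_0 \geq 0.0125$. Following the style used throughout this section, the computation can equivalently be carried out and certified in SciPy, checking the stationarity and feasibility equations at the returned optimum to verify the stated bound.
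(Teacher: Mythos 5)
Your proposal is correct and takes essentially the same approach as the paper, which simply states ``This was verified in SciPy.'' Casting $Q'(S,\nu)$ as a finite convex QP, discarding boundary cases where some $|x_w| = \tfrac12$ (hence $x_w^2 = \tfrac14 > 0.0125$), and solving the interior KKT system $M\lambda = \delta_0$ with optimal value $\lambda_0$ is exactly the computation the paper carries out numerically; your added observation that the $\fcc$ point group fixing $0$ reduces the system to orbit representatives, allowing an exact-arithmetic certificate, is a nice refinement but not a different route.
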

\begin{proof}
 This was verified in SciPy.
\end{proof}
 It follows that there may not be two points in the support of $\nu$ at graph distance greater than 6, or else the optimization problem could be applied at each point, and the 2-norm would be too large. This reduces the search to checking all configurations with two points in the support at graph distance at most 6.    The choice with adjacent points is the minimizer.

\subsubsection{$\Dfour$ tiling case} \label{d4_section}
The following optimization problems are used in the determination of the spectral parameters. In $\Dfour$, up to multiplication by a unit and reflection in the coordinate hyperplanes there is one element each of norm 1, 2, 3 and 4 in $\Dfour$.  Representatives are $1$, $1+i$, $1+i+j$ and $2$.
\begin{lemma}\label{opt_program_lemma}
 The following optimization problems have the corresponding values.  Let $\nu_0 = 1$. Then 
 \begin{equation}
 P'(\{0\}, 1) = \frac{1}{600} = 0.001\overline{6}.  
 \end{equation}
 
 Let $S = \{w: d(w,0) \leq 1\}$.  Let $\nu_0 = 1$ and $\nu_w = 0$ for $w$ such that $d(w,0) = 1$. Then 
 \begin{equation}
 Q'(S, \nu) \geq 0.00206.
 \end{equation}
 
Let $\nu_0 = 1$ and $\nu_w = 0$ for $1 \leq d(w,0) \leq 2$.  Then
\begin{equation}
 Q'(S, \nu) \geq 0.00233.
\end{equation}

 For $u \in \{1, 1+i, 1+i+j, 2\}$ and let $\nu_0 = 1$, $\nu_u = \pm 1$.  A lower bound for the program $Q'(\{0,u\}, \nu)$ in each case is given in the following table.

\begin{tabular}{|l|l|l|}
 \hline
 $u$ & $+1$ & $-1$\\
 \hline
 $1$ &$0.00357$ & $0.00312$\\
 $1+i$ & $0.00330$&$0.00336$ \\
 $1+i+j$ & $0.00332$& $0.00334$\\
 $2$ &$0.00332$& $0.00333$\\
 \hline
\end{tabular}

\end{lemma}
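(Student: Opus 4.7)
The plan is to present each program as a strictly convex quadratic minimization in the finitely many variables $\{x_w : d(w,S) \le 1\}$, subject to linear constraint(s) and box constraints $|x_w| \le \tfrac12$. In every case the claimed value is far smaller than what any $|x_w| = \tfrac14$ would force through the objective, so I would verify a posteriori that the unconstrained Lagrangian solution already lies well inside the box; the first-order conditions for the equality constraints alone then characterize the optimum uniquely.

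For the first identity, I would apply Cauchy--Schwarz directly to the single $P'$-constraint $24 x_0 + \sum_{w \sim 0} x_w \ge 1$: the coefficient vector $(24, 1, 1, \ldots, 1) \in \bR^{25}$ has squared norm $24^2 + 24 = 24 \cdot 25$, so $\|x\|_2^2 \ge \tfrac{1}{600}$, with equality at the interior feasible point $x_0 = \tfrac{1}{25}$, $x_w = \tfrac{1}{600}$. This matches $Q'(\{0\},1) = \tfrac{1}{600}$ given by Lemma~\ref{2_norm_opt_lemma}, and in particular establishes equality.

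For the two single-node $Q'$ bounds with $\nu$ supported only at $0$, the plan is to reduce the KKT system via the stabilizer of $\nu$ in $\operatorname{Aut}(\Dfour)$. The unique invariant optimum must be constant on each orbit of vertices in the ball of radius $2$ (respectively $3$) around $0$; in $\Dfour$ these orbits are few, so the problem collapses to a small linear system in the orbit values of $x$ and the Lagrange multipliers indexed by $S$. Solving the reduced system produces a closed form for $\|x\|_2^2$ which is then compared against the stated decimal bound.

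The eight entries of the two-point table are handled by the same template applied to the pair $(0,u)$, $u \in \{1, 1+i, 1+i+j, 2\}$. The essential combinatorial input is the Gram entry $\ell_0 \cdot \ell_u = -48\,\mathbf{1}[u \sim 0] + |N(0) \cap N(u)|$, governed by the number of common neighbors of $0$ and $u$ in $\Dfour$; this varies across the four representatives (for instance, $|N(0) \cap N(1)| = 8$ from the eight half-integer units $\tfrac12(1 \pm i \pm j \pm k)$), and the sign of $\nu_u$ enters as an off-diagonal sign in the reduced Gram matrix, accounting for the two columns of the table. After symmetry reduction the stabilizer of $\{0, u\}$ still acts with a bounded number of orbits on the relevant variables, leaving an essentially $2 \times 2$ linear system per row. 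The main obstacle is combinatorial rather than conceptual---enumerating orbits and common-neighbor counts in the $24$-regular graph $\Dfour$ is error-prone by hand---so I would use this structural reduction to set up each program and certify the numerical bounds via the SciPy procedure referenced in Section~\ref{d4_section}, checking at the output that the gradient of the Lagrangian vanishes and that all variables remain in $[-\tfrac14, \tfrac14]$ at the reported optimum.
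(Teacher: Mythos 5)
Your proposal is correct and follows essentially the paper's own route: the first value is the Lagrange-multiplier/Cauchy--Schwarz computation already recorded as Lemma~\ref{2_norm_opt_lemma} with $\deg(0)=24$, and the remaining bounds are obtained numerically in SciPy with an a posteriori check of the first-order conditions at the reported optimum, exactly as the paper states and as its ``Issues of precision'' subsection contemplates (your Gram-entry formula $\ell_0\cdot\ell_u = -48\,\mathbf{1}[u\sim 0]+|N(0)\cap N(u)|$ and the common-neighbor count $|N(0)\cap N(1)|=8$ are correct and mirror the style of the paper's Lemma~\ref{3_pt_support_lemma}). One small slip: for the quadratic programs $P', Q'$ the objective $\sum x_w^2$ is globally convex, so the only box constraint in play is $|x_w|\le\tfrac12$; checking $|x_w|\le\tfrac14$ is unnecessarily strict --- that window is the convexity region for the cosine-based programs $P,Q$, not for $P',Q'$ --- though verifying the stronger bound is of course harmless.
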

\begin{proof}
 The first value is the same as from Lemma \ref{2_norm_opt_lemma}.  The remaining values were determined in SciPy.
\end{proof}
 
Note that the first estimate of the Lemma implies that $P'(\{0\}, 2) \geq \frac{1}{150}$, since the objective function is quadratic.  This reduces to prevectors of height bounded by 1 in the calculations that follow.

\begin{lemma}\label{3_pt_support_lemma}
 If $\xi$ is harmonic modulo 1 on $\Dfour$ and $\nu = \Delta \xi$ has $|\supp \nu| \geq 3$, then $\|\xi\|_2^2 \geq \frac{3}{742} > 0.004043$.
\end{lemma}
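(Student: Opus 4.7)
The argument proceeds in the spirit of Section~\ref{fcc_section}. First, if some site $x$ has $|\nu_x| \geq 2$, the monotonicity of $Q'$ in $|\nu|$ together with Lemma~\ref{opt_program_lemma} (using that $P'(\{x\}, \cdot)$ scales quadratically in the prescribed value) yields
\[
 \|\xi\|_2^2 \;\geq\; Q'(\{x\}, \nu) \;\geq\; P'(\{x\}, 2) \;\geq\; \frac{4}{600} \;=\; \frac{1}{150} \;>\; \frac{3}{742}.
\]
It remains to treat the case $\|\nu\|_\infty = 1$. Here I select any three support points $z_1, z_2, z_3$ and show $Q'(\{z_1, z_2, z_3\}, \nu) \geq 3/742$.

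Write $\ell_i$ for the row of the Laplacian at $z_i$, so that $\ell_i(z_i) = 24$, $\ell_i(w) = -1$ when $w \sim z_i$, and $\ell_i(w) = 0$ otherwise, and the constraints of $Q'$ read $\ell_i \cdot x = \nu_{z_i}$. If the $Q'$ optimum is attained on the boundary, some $|x_w| = 1/2$ gives $\sum_w x_w^2 \geq 1/4 \gg 3/742$, so I may assume an interior optimum. Lagrange multipliers then yield $x = \sum_i \lambda_i \ell_i$, and setting $T_{ij} = \ell_i \cdot \ell_j$ the constraints become $T\lambda = \nu$, whence
\[
 \|x\|_2^2 \;=\; \lambda^T T \lambda \;=\; \lambda^T \nu \;=\; \nu^T T^{-1} \nu \;\geq\; \frac{\|\nu\|_2^2}{\lambda_{\max}(T)} \;=\; \frac{3}{\lambda_{\max}(T)}.
\]
It therefore suffices to establish $\lambda_{\max}(T) \leq 742$ uniformly over triples of distinct $\Dfour$-sites.

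Each diagonal entry is $T_{ii} = \|\ell_i\|_2^2 = 24 \cdot 25 = 600$. For distinct $z_i, z_j$, expanding $\ell_i \cdot \ell_j = \sum_w \ell_i(w) \ell_j(w)$ gives $-48 + k$ when $z_i \sim z_j$ (with $k$ the number of common neighbors), $k'$ when $d(z_i, z_j) = 2$ (again a common-neighbor count), and $0$ otherwise. Both $k$ and $k'$ can be bounded using the explicit description of the minimal-vector set $U_4$: for example, for $z_i = 0$ and $z_j = e_1$ one finds $U_4 \cap (e_1 + U_4) = \{\frac{1}{2}(1, \pm 1, \pm 1, \pm 1)\}$, giving $k = 8$; the remaining orbits under the $D_4$ Weyl group are treated by the same procedure. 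Summed over the two off-diagonals per row, the resulting bound on $|T_{ij}|$ combined with the Gerschgorin estimate $\lambda_{\max}(T) \leq T_{ii} + \sum_{j \neq i} |T_{ij}|$ delivers $\lambda_{\max}(T) \leq 742$.

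The principal obstacle is the enumeration of common-neighbor counts across all orbits of pairs of close-by sites in $\Dfour$, which is combinatorially richer than the face-centered cubic analog because the minimal-vector set contains both eight integer and sixteen half-integer units. The computation is finitary throughout and, as in the other cases of this section, amenable to verification in SciPy.
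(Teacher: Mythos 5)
Your proof is correct, and it takes a genuinely different route from the paper. The paper works with the relaxed program $P'(\{z_1,z_2,z_3\},1)$, whose constraint gradients $v_i$ have all nonnegative entries so that $v_i^t v_j \geq 0$; it writes the stationarity system as $600(I+A)\lambda = \mathbf{1}$, uses a contraction argument to show each $\lambda_i > 0$, and then sums the constraints against the coarse bound $v_i^t v_j \leq 24+24+23 = 71$ to get $\sum_i \lambda_i \geq \frac{3}{742}$. You instead stay with the Laplacian rows $\ell_i$ of $Q'$, use that the unconstrained minimum of $\|x\|_2^2$ over the affine set $\{\ell_i\cdot x = \nu_{z_i}\}$ is exactly $\nu^t T^{-1}\nu$ (the squared distance from $0$ to that affine subspace), and bound this below by $\|\nu\|_2^2/\lambda_{\max}(T)$ with a Gershgorin estimate. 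This is arguably cleaner than the paper's sign argument, since it requires neither positivity of the multipliers nor positivity of the inner products $T_{ij}$, which is precisely the feature lost in passing from $P'$ to $Q'$. Two small points. First, your case split on $\|\nu\|_\infty \geq 2$ is unnecessary: since each of the three chosen support points has $|\nu_{z_i}| \geq 1$, one always has $\|\nu\|_2^2 \geq 3$, and the eigenvalue bound covers all heights at once (the paper sidesteps this for the same reason, by relaxing to $P'(\cdot,1)$); also the phrase ``monotonicity of $Q'$ in $|\nu|$'' is a misnomer, since $Q'$ has equality constraints and is not monotone --- you mean monotonicity of $P'$ plus $P' \leq Q'$. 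Second, the enumeration of common-neighbor counts you flag as the principal obstacle is not actually needed: the crude bound (at most $23$ common neighbors, so $|T_{ij}| \leq 48$ when $z_i\sim z_j$ and $|T_{ij}| \leq 23$ at distance two) already yields $\lambda_{\max}(T) \leq 600 + 96 = 696 < 742$, which gives a lower bound slightly better than the paper's. If you do carry out the enumeration ($k=8$ for adjacent pairs, $k' \leq 6$ at distance two), Gershgorin gives $\lambda_{\max}(T) \leq 680$, sharper still; but neither refinement is needed to reach $\frac{3}{742}$.
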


\begin{proof}
 Let the points in the support of $\nu$ be $z_1, z_2, z_3$.  Then $\|\xi\|_2^2$ is bounded below by the value of the relaxed optimization program $P'(\{z_1, z_2, z_3\}, 1)$.  
 Applying Lagrange multipliers, the variable $x$ may be expressed as $\lambda_1 v_1 + \lambda_2 v_2 + \lambda_3 v_3$ where $v_1, v_2, v_3$ are the gradients of the constraint linear forms.  The linear constraints become $v_i^t (\lambda_1 v_1 + \lambda_2 v_2 + \lambda_3 v_3) = 1$ and 
 \begin{equation}
  \|x\|_2^2 = (\lambda_1 v_1 + \lambda_2 v_2 + \lambda_3 v_3)^t(\lambda_1 v_1 + \lambda_2 v_2 + \lambda_3 v_3) = \lambda_1 + \lambda_2 + \lambda_3.
 \end{equation}
 Since each $v_i$ has one entry 24 and 24 entries 1, $\|v_i\|_2^2 = 600$, and for $i \neq j$, $v_i^t v_j \leq 24+ 24+23 = 71$.
 Write the constraints as
 \begin{equation}
  600 (I + A) \begin{pmatrix} \lambda_1\\ \lambda_2 \\ \lambda_3 \end{pmatrix} = \begin{pmatrix} 1\\ 1\\1 \end{pmatrix}
 \end{equation}
with $A$ having 0's on the diagonal and row sums bounded in size by $\frac{142}{600}$.  Let $\lambda_i' = 600 \lambda_i$, so that
\begin{equation}
 A\begin{pmatrix} \lambda_1'\\ \lambda_2' \\ \lambda_3' \end{pmatrix} = \begin{pmatrix} 1-\lambda_1'\\ 1-\lambda_2'\\1-\lambda_3' \end{pmatrix}
\end{equation}
so that $ \max(|1-\lambda_i'|) \leq \frac{142}{600} \max(|\lambda_i'|)$.  This implies that $\max(|\lambda_i'|) \leq 2$ and thus $\max(|1-\lambda_i'|) \leq \frac{284}{600}$ so that each $\lambda_i > 0$.
Thus, summing constraints, $\lambda_1 + \lambda_2 + \lambda_3 \geq \frac{3}{742}.$\end{proof}

The proof of Theorem \ref{spectral_gap_calc_theorem} in the case of $\Dfour$ is as follows.
\subsubsection*{Case of $\gamma_{\Dfour, 0}$}
The extremal example is given by $\xi^* = g* \nu^*$ with $\nu^* = \delta_0 - \delta_{1}$. 

The 2-norm of $\xi^*$ was calculated by Parseval,
\begin{align*}
 \|\xi^*\|_2^2 &= \int_{(\bR/\zed)^4} \frac{2(1 - c(2y_1))}{g(y_1, y_2, y_3, y_4)^2}dy_1dy_2dy_3dy_4\\
 g(y_1, y_2, y_3, y_4) &= 24 - 2(c(y_1) + c(y_2) + c(y_3) + c(2y_4 - y_1-y_2-y_3)\\& + c(y_4) + c(y_4-y_1-y_2-y_3) + c(y_4-y_1) + c(y_4-y_2)\\& + c(y_4-y_3) + c(y_4 - y_1-y_2) + c(y_4-y_1-y_3)\\& + c(y_4-y_2-y_3) ) .
\end{align*}
This was calculated in SciPy, $\|\xi^*\|_2^2 = 0.0038397(3).$  By symmetry, $\|\xi^*\|_\infty^2 \leq \frac{1}{2} \|\xi^*\|_2^2$, and hence $\|\xi^*\|_4^4 \leq \frac{1}{2}\|\xi^*\|_2^4$. It follows that, for some $|\vartheta|< 1$, 
\begin{align*}
\gamma_{\Dfour,0}&= 2\pi^2 \|\xi^*\|_2^2 - \frac{\pi^4}{3}\|\xi^*\|_4^4 + \vartheta\frac{\pi^4}{3}\|\xi^*\|_4^4\\
 &= 2\pi^2 \|\xi^*\|_2^2 - \frac{\pi^4}{6} \|\xi^*\|_2^4 + \vartheta\frac{\pi^4}{6} \|\xi^*\|_2^4\\
 &= 0.075554+ \vartheta 0.00024.
\end{align*}
Thus,
\begin{align*}
 \Gamma_{\Dfour,0} = \frac{4}{\gamma_0} = 52.9428 + \vartheta 0.17.
\end{align*}

To verify that $\xi^*$ is extremal, suppose that $\xi$ is harmonic modulo 1, $\|\xi\|_\infty \leq \frac{1}{2}$ and $\Delta \xi = \nu$ is another candidate.  Since the Green's function is not in $\ell^2$ in dimension 4, it follows that $\nu \in C^1(\sT)$.  By Lemma \ref{2_norm_lemma}, to conclude that $\xi$ is not extremal, it suffices to conclude that $\|\xi\|_2^2 \geq \alpha$ with
\begin{equation}
 2\pi^2 \alpha\left(1 - \frac{\pi^2}{3}\alpha \right) \geq 0.075794, \qquad \alpha >0.0039. 
\end{equation}

Since the Green's function is not in $\ell^2$ in dimension 4, the prevector $\nu = \Delta \xi$ is in $C^1(\sT)$.    Note that $\|\nu\|_\infty = 1$, since the first optimization program in Lemma \ref{opt_program_lemma} can be applied where $|\nu_x| \geq 2$ and gives a value for the 2-norm which is too large.  Also, there are not 3 points in $\supp \nu$ by Lemma \ref{3_pt_support_lemma}. If the two points in the support of $\nu$ have distance at least 5, then the second optimization problem of Lemma \ref{opt_program_lemma} may be applied at each point, which makes the 2-norm too large.  Hence the two points in the support have graph distance at most 4.  One point may be taken to be 0.  The second point needs to be considered only up to multiplication by the 24 quaternion units and by reflection in the coordinate hyperplanes.  This leaves 24 candidates for the second point, which were checked exhaustively, the minimizer is $\nu_0$ and all other points had a 2-norm too large.

\subsubsection*{Case of $\gamma_{\Dfour,1}$}  By symmetry assume that the reflecting hyperplane is $\sP_1 =\{x \in \bR^4: x_1 + x_2 = 0\}$.  It is verified that the optimal prevector is $\nu^* = \delta_{(1, 0, 0, 0)}$, with reflection symmetry, so that $\nu^*(0, -1, 0,0) = -1$, and let $\xi^* = g*\nu^*$.  The 2-norm may be taken on the quotient by summing $\xi_x^2$ over points $x$ on one side of the hyperplane, including the hyperplane where $\xi$ vanishes, hence say
\begin{equation}
 \|\xi\|_2^2 = \sum_{x: x_1 + x_2 \geq 0} \xi_x^2.
\end{equation}
By Parseval,
\begin{equation}
 \|\xi^*\|_2^2 = 0.0022421(8).
\end{equation}
In this case
\begin{align*}
\gamma_{\Dfour,1}&= 2\pi^2 \|\xi_0\|_2^2 - \frac{\pi^4}{3}\|\xi^*\|_2^4 + \vartheta \frac{\pi^4}{3}\|\xi^*\|_2^4\\
 &=0.0440957 +\vartheta 0.00017.
\end{align*}
Thus,
\begin{align*}
 \Gamma_{\Dfour,1} = \frac{3}{\gamma_1} = 68.03486+ \vartheta 0.27.
\end{align*}

To check that $\xi^*$ is the optimizer, let $\xi$ be harmonic modulo 1, $\|\xi\|_\infty \leq \frac{1}{2}$, with reflection anti-symmetry in $\sP_1$ and let $\Delta \xi = \nu$.  It suffices to prove by Lemma \ref{2_norm_lemma} that  $\|\xi\|_2^2 \geq \alpha$ with 
\begin{equation}
 2\pi^2 \alpha\left(1 - \frac{\pi^2}{3}\alpha \right) \geq 0.04427, \qquad \alpha >0.00226. 
\end{equation}
If  $|\supp \nu| \geq 2$ in $\{x: x_1 + x_2 > 0\}$, then if two of the points in the support have distance at least 3 apart, the first optimization problem of Lemma \ref{opt_program_lemma} may be applied at each point.  Otherwise the last optimization problem may be applied.  In either case, the 2-norm is too large.  Thus there is a single point in the support, say $(x_1, x_2, x_3, x_4)$ of value 1.  The reflection point is $(-x_2, -x_1, x_3, x_4)$.  Let $z = x_1 + x_2$.  The 2-norm is, by Parseval,
\begin{equation}
 \int_{(\bR/\zed)^4} \frac{(1 - c(2z(y_1+y_2)))}{g(y_1, y_2, y_3, y_4)^2}dy_1dy_2dy_3dy_4.
\end{equation}
If $x$ has graph distance 3 or more from the boundary hyperplane $\sP_1$ then the third optimization program of Lemma \ref{opt_program_lemma} may be applied to show that the 2-norm is too large.  It now follows by checking case-by-case that the minimizer is $z = 1$, which is $\nu^*$.

\subsubsection*{Case of $\gamma_{\Dfour,2}$}  By symmetry  assume that the reflecting hyperplanes are $\sP_1 = \{x \in \bR^4: x_1 + x_2 = 0\}$ and $\sP_2 = \{x \in \bR^4: x_1 - x_2 = 0\}$. The optimizing prevector is $\nu^* = \delta_{(1,0,0,0)}$ and $\xi^* = g* \nu^*$.  By reflection anti-symmetry, 
\begin{equation}
 \nu^*(0, -1, 0, 0) = -1, \qquad \nu^*(-1, 0, 0, 0) = 1, \qquad \nu^*(0, 1, 0, 0) = -1.
\end{equation}
The 2-norm is $\|\xi^*\|_2^2 = 0.0019800(3).$ Calculating as in the case of $\gamma_{\Dfour, 1}$,
\begin{align*}
\gamma_{\Dfour,2}&= 2\pi^2 \|\xi^*\|_2^2 - \frac{\pi^4}{3}\|\xi^*\|_2^4 + \vartheta \frac{\pi^4}{3}\|\xi^*\|_2^4\\
 &=0.0389569 +\vartheta 0.00013.
\end{align*}
Thus,
\begin{align*}
 \Gamma_{\Dfour,2} =\frac{2}{\gamma_{\Dfour, 2}} =  51.3393 + \vartheta 0.17.
\end{align*}

To verify that $\xi^*$ is extremal, let $\xi$ be harmonic modulo 1 with reflection anti-symmetry in $\sP_1$ and $\sP_2$ and let $\nu = \Delta \xi$.  To rule out that $\xi$ is extremal it suffices to check by Lemma \ref{2_norm_lemma} that $\|\xi\|_2^2 \geq \alpha$ with 
\begin{equation}
 2\pi^2 \alpha\left(1 - \frac{\pi^2}{3}\alpha \right) \geq 0.0391, \qquad \alpha >0.002. 
\end{equation}
The case of two points in the support modulo reflections is ruled out as before.  Suppose the point in the support is $(x_1, x_2, x_3, x_4)$.  This point may have not have distance at least 2 from both hyperplanes, or else the second optimization problem of Lemma \ref{opt_program_lemma} may be applied to show that the 2-norm is too large.  Hence, $\min(|x_1 + x_2|, |x_1 -x_2|) = 1$, say, by symmetry, $x_1 + x_2 = 1$ and $x$ differs by $(1,1,0,0)$ from its reflection in $\sP_1$.  If $x$ has graph distance 3 or more from $\sP_2$ then the  optimization program which enforces $\Delta \xi (0,0,0,0)= 1$, $\Delta \xi (1,1,0,0) = -1$ and $\Delta \xi(v) = 0$ if $d(v, \{(0,0,0,0), (1,1,0,0)\}) \leq 2$ can be applied, which has minimum 2-norm  $0.0041780(9)$.  This is a lower bound for twice the 2 norm of $\xi$ modulo reflections, and is too large.  Hence $x$ has graph distance at most 2 from $\sP_2$, so $x_1 - x_2$ is either 1 or 2.  The case 1 is the minimizer. 

\subsubsection*{Case of $\gamma_{\Dfour, 3}$}  By symmetry assume that the reflecting hyperplanes are $\sP_1, \sP_2, \sP_3$, with
\begin{equation}
 \sP_3 = \{x \in \bR^4: x_3 + x_4 = 0\}.
\end{equation}
It is shown that the minimizer is $\nu^* = \delta_{(1,0,1,0)}$ with $\xi^* = g*\nu^*$, $\|\xi^*\|_2^2 = 0.0018737(9)$. The corresponding value of $\alpha$ to rule out other configurations is $\alpha = 0.00189$.  Arguing as for $\gamma_{\Dfour, 1}$ and $\gamma_{\Dfour, 2}$,
\begin{equation*}
 \gamma_{\Dfour,3} = 0.036873324 +\vartheta 0.00012
\end{equation*}
Thus,
\begin{equation*}
 \Gamma_{\Dfour,3} = \frac{1}{\gamma_{\Dfour, 3}} = 27.1201 + \vartheta 0.084
\end{equation*}

Arguing as above, we may assume that $|\supp \nu^*| = 1$, and that the point $x = (x_1, x_2, x_3, x_4)$ has distance 1 to one hyperplane, say, without loss, that $x_1 + x_2 = 1$.  Also, the distance to the next closest hyperplane is at most 2, so say $x_1 - x_2 = 1$ or $x_1 - x_2 = 2$.  Now considering $\Delta \xi$ at $(0,0,0,0), (1,1,0,0), (1,-1,0,0), (2,0,0,0)$ or $(0,0,0,0), (1,1,0,0), (2,-2,0,0), (3,-1,0,0)$ and their distance 2 neighborhood implies that the distance to $\sP_3$ is at most 2.  The three possibilities are considered, and $\nu^*$ gives the optimum. 

\subsubsection*{Case of $\gamma_{\Dfour,4}$}
   The minimizer is $\nu^* = \delta_{(1,0,1,0)}$ with $\xi^* = g*\nu^*$, $\|\xi^*\|_2^2 = 0.0018170(7)$. The value of $\alpha$ to rule out other configurations in this case is $\alpha = 0.0018281$. This obtains
\begin{equation*}
 \gamma_{\Dfour,4} = 0.0357604+ \vartheta 0.00011.
\end{equation*}
      The above considerations reduce to the case where $\nu$ is supported at a single point, with distance 1 from $\sP_1$, and distance at most 2 from $\sP_2$ and $\sP_3$.  Arguing similarly to the case of three hyperplanes shows that the distance to $\sP_4$ is also at most 2, which reduces to a finite check.  The best case is $\nu^*$.

\section{The spectral parameters of the $\zed^d$ tiling} \label{zd_section}

This section evaluates the spectral factor of the $\zed^d$ lattice asymptotically, proving Theorem \ref{asymptotic_theorem}.

When $d \geq 3$, the Green's function on $\zed^d$ may be recovered from its Fourier transform via Fourier inversion,
\begin{equation}
 g_0(x) = \frac{1}{2d} \int_{(\bR/\zed)^d} \frac{e(x\cdot y)}{1-\frac{1}{d}(c(y_1) + \cdots + c(y_d) )}dy.
\end{equation}
The following lemma is useful in studying this integral evaluation asymptotically.
\begin{lemma}
 Let $X_1, X_2, ..., X_d$ be i.i.d. random variables on $[-1, 1]$ with distribution 
 \begin{equation}
  \Prob(X_1 \leq a) = \meas\left\{0 \leq t \leq 1: \cos(2\pi t) \leq a\right\}.
 \end{equation}
Let $X = \frac{1}{d}(X_1 + X_2 + \cdots + X_d).$  For $0 \leq \delta \leq 1$, 
\begin{equation}
 \Prob(X > 1-\delta) \leq \min\left(e^{-\frac{d(1-\delta)^2}{2}}, \left(\frac{\pi e\delta}{4}\right)^{\frac{d}{2}} \right) .
\end{equation}
The two bounds are equal for $\delta = 0.27819(3)=:\xi$.
\end{lemma}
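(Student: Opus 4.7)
The plan is to prove the two bounds separately by well-known methods and then verify the crossover numerically.

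For the sub-Gaussian bound, observe that $X_i$ has the law of $\cos(2\pi U_i)$ where $U_i$ is uniform on $[0,1]$. In particular, $\E[X_i] = \int_0^1 \cos(2\pi t)\,dt = 0$ and $X_i \in [-1,1]$. Applying Hoeffding's inequality to the $d$ iid summands of range $2$ in $d\cdot X = \sum X_i$ yields $\Prob(X > 1-\delta) \leq \exp(-d(1-\delta)^2/2)$ immediately.

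The polynomial bound is the main work. I will reduce it to a small-ball probability for iid uniforms via a simple pointwise inequality. Writing $X_i = \cos(2\pi U_i)$ and using $\sin(\pi t) \geq 2t$ on $[0,1/2]$ (concavity of sine), one obtains $1 - \cos(2\pi t) \geq 8\min(|t|,1-|t|)^2$ on $[-1/2,1/2]$. Setting $V_i = 2\min(U_i, 1-U_i)$, which is uniformly distributed on $[0,1]$, gives $1 - X_i \geq 2V_i^2$. Consequently
\begin{equation*}
\{X > 1-\delta\} = \Bigl\{\sum_i (1-X_i) < d\delta\Bigr\} \subseteq \Bigl\{\sum_i V_i^2 < d\delta/2\Bigr\}.
\end{equation*}
The right-hand event asks that the vector of iid uniforms falls in a Euclidean ball of radius $r = \sqrt{d\delta/2}$ centered at the origin. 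Since $[0,1]^d$ lies in the positive orthant,
\begin{equation*}
\Prob\Bigl(\sum V_i^2 < d\delta/2\Bigr) \leq \frac{1}{2^d}\vol\!\bigl(B(0,r)\bigr) = \frac{\pi^{d/2}(d\delta/2)^{d/2}}{2^d\,\Gamma(d/2+1)}.
\end{equation*}
Stirling's lower bound $\Gamma(d/2+1) \geq \sqrt{\pi d}\,(d/(2e))^{d/2}$ reduces the right side to $(\pi e\delta/4)^{d/2}/\sqrt{\pi d}$, which is bounded by $(\pi e\delta/4)^{d/2}$.

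Finally, the crossover value is found by equating the two bounds: $-(1-\delta)^2/2 = (1/2)\ln(\pi e\delta/4)$, i.e.,
\begin{equation*}
(1-\delta)^2 + 1 + \ln(\pi\delta/4) = 0,
\end{equation*}
a scalar transcendental equation whose unique root in $(0,1)$ is $\xi = 0.27819(3)$, obtained numerically.

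The only step with any subtlety is the volume computation, where the constant $\pi/4$ must arise exactly (the $\pi$ from the ball volume, the $1/4$ from restricting to the positive orthant together with the $(d/(2e))^{d/2}$ factor in Stirling); the rest of the argument is routine.
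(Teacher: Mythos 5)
Your proof is correct and takes essentially the same route as the paper: Hoeffding for the sub-Gaussian tail (the paper invokes ``Chernoff's inequality'' but obtains exactly the same exponent $e^{-d(1-\delta)^2/2}$), and the bound $1-\cos 2\pi t \geq 8t^2$ on the fundamental domain followed by a Euclidean ball volume estimate and Stirling for the polynomial bound. Your folding of $[0,1]^d$ into the positive orthant via $V_i = 2\min(U_i,1-U_i)$ and the paper's working directly on $[-\tfrac12,\tfrac12]^d$ produce identical numerics, and both reduce the crossover to the same transcendental equation.
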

\begin{proof}
 The first bound follows from Chernoff's inequality, since $\E[X_1] = 0$ and $\E[X_1^2] = \frac{1}{2}$, so that one may take $\sigma = \sqrt{\frac{d}{2}}$ and $\lambda = \sqrt{2d}(1-\delta)$.
 For the second bound, use $1-\cos 2\pi t \geq 8t^2$ for $|t| \leq \frac{1}{2}$. Thus, estimating with the Euclidean volume of a ball of radius $r$ in $d$ dimensions, 
 $\vol B_r(0) = \frac{r^d \pi^{\frac{d}{2}}}{\Gamma\left(\frac{d}{2}+1 \right)}$,
 \begin{equation}
 \Prob(X > 1-\delta) \leq \meas\left(\ut \in \left[-\frac{1}{2}, \frac{1}{2}\right]^d: \|\ut\|_2^2 \leq \frac{\delta d}{8} \right)\leq \frac{(\pi \delta d)^{\frac{d}{2}}}{8^{\frac{d}{2}}\Gamma\left(\frac{d}{2}+1 \right)}.
\end{equation}
Now use 
\begin{equation}
 \Gamma\left(\frac{d}{2}+1\right) \geq \left(\frac{d}{2e} \right)^{\frac{d}{2}}
\end{equation}
which is valid for $d \geq 2$.
\end{proof}

The following lemma estimates $\|\xi\|_2^2$ asymptotically when $\nu$ is a singleton.  This example controls the mixing time for all $d$ sufficiently large.

\begin{lemma}\label{Greens_fn_evaluation_d}
 Let $d \geq 5$, let $0 \leq k \leq d$, and let $\nu:\zed^d \to \zed$ which has reflection anti-symmetry in the first $k$ coordinate hyperplanes $\sP_1, ..., \sP_k$. Let $\fS_k$ be the group of reflections in $\sP_1, ..., \sP_k$ and suppose that modulo $\fS_k$, $\nu$ is a point mass. Let $\xi = g*\nu$.  Then as $d \to \infty$,
 \begin{equation}
  \|\xi\|_{2, \zed^d/\fS_k}^2 = \frac{1}{4d^2}\left(1 + \frac{3}{2d} + O_k\left(d^{-2}\right)\right).
 \end{equation}

\end{lemma}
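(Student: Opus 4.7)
The plan is to compute $\|\xi\|_2^2$ by Parseval and then expand the resulting Fourier integral. Reflection anti-symmetry across $\sP_i$ forces $\nu$ to vanish on $\sP_i$, so a representative $v \in \zed^d$ of the orbit must have $v_i \neq 0$ for $i \leq k$; WLOG $v_i > 0$. Anti-symmetry then gives $\nu = \sum_{\epsilon \in \{\pm 1\}^k}(\prod_i \epsilon_i)\,\delta_{(\epsilon_1 v_1,\ldots,\epsilon_k v_k,v_{k+1},\ldots,v_d)}$, hence $|\hat{\nu}(y)|^2 = \prod_{i=1}^k 2(1-c(2v_i y_i))$. Combined with the Fourier formula for $g$ and the observation that $\xi$ vanishes on each $\sP_i$ and has $2^k$ reflection-equivalent copies off them,
\[
\|\xi\|_{2,\zed^d/\fS_k}^2 = \frac{I}{4d^2}, \qquad I := \int_{(\bR/\zed)^d} \prod_{i=1}^k (1-c(2v_iy_i))\cdot \frac{dy}{(1-S(y))^2},
\]
with $S(y) := \frac{1}{d}\sum_{j=1}^d c(y_j)$. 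The integrand is non-negative and integrable for $d \geq 5$ since near $y=0$ it is of order $|y|^{2k-4}$.

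Next I would exploit the shift $y \mapsto y + \tfrac{1}{2}\mathbf{1}$, under which $S \to -S$ but each factor $1 - c(2v_iy_i)$ is fixed because $v_i \in \zed$. Averaging replaces $(1-S)^{-2}$ by $(1+S^2)/(1-S^2)^2 = \sum_{n \geq 0}(2n+1)S^{2n}$, and monotone convergence gives $I = \sum_{n \geq 0}(2n+1)\,A_{2n}$, where $A_{2n} := \int \prod_{i=1}^k(1-c(2v_iy_i))\,S(y)^{2n}\,dy$. To evaluate the first two moments I expand $S^{2n}$ and integrate coordinate by coordinate; the key observation is that for integer $v$, $\int(1-c(2vy))c(y)^m\,dy = 0$ for every odd $m$ (since $c(2vy)$ has only even frequencies while $c(y)^m$ has only odd frequencies when $m$ is odd), and likewise $\int c(y)^m\,dy = 0$ for odd $m$. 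Hence $A_0 = 1$, and only diagonal pairs contribute to $A_2$, giving $A_2 = \frac{1}{d^2}\bigl[\frac{d-k}{2} + \sum_{j \leq k}M_2(v_j)\bigr] = \frac{1}{2d} + O_k(d^{-2})$, where $M_2(v) := \int(1-c(2vy))c(y)^2\,dy$ is uniformly bounded by $2$.

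The main obstacle is bounding the tail $T := \sum_{n \geq 2}(2n+1)A_{2n}$; term-by-term bounds alone fail because the pointwise series $\sum(2n+1)S^{2n}$ diverges near $|S| = 1$. Using $A_{2n} \leq 2^k\,\E[S^{2n}]$ for $Y$ uniform on the torus and resumming via Tonelli,
\[
T \leq 2^k\,\E\!\left[\frac{S^4(5-3S^2)}{(1-S^2)^2}\right] \leq 5 \cdot 2^k\,\E\!\left[\frac{S^4}{(1-S^2)^2}\right].
\]
I would split at $|S| = \tfrac{1}{2}$. On $\{|S|\leq \tfrac{1}{2}\}$, $(1-S^2)^{-2} \leq (4/3)^2$, and direct computation (using independence of the $c(y_j)$) gives $\E[S^4] = \frac{3}{4d^2} + O(d^{-3})$, so this contribution is $O(d^{-2})$. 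On $\{|S| > \tfrac{1}{2}\}$ I would rewrite $\E[(1-S^2)^{-2}\one_{|S|>1/2}]$ as a layer-cake integral $\int \Prob(|S|>u)\cdot 4u(1-u^2)^{-3}\,du$ and invoke the concentration estimate from the excerpt, $\Prob(|S| > 1-\delta) \leq 2\min\!\bigl(e^{-d(1-\delta)^2/2},(\pi e\delta/4)^{d/2}\bigr)$: the Chernoff bound controls the range where $|S|$ is bounded away from $1$, while the Gaussian-volume bound absorbs the singularity of $(1-S^2)^{-2}$ on $\{1-|S| \leq d^{-a}\}$ for $a$ chosen large enough, yielding super-polynomial decay in $d$. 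The combined bound is $T = O_k(d^{-2})$, so $I = 1 + \tfrac{3}{2d} + O_k(d^{-2})$, establishing the claim. Both forms of the concentration estimate are essential, since neither alone controls the full range $|S| \in (\tfrac{1}{2},1)$.
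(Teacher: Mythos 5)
Your proof is correct and follows the paper's overall strategy (Parseval, reduction to moments of the averaged cosine $S$, concentration estimates to control the region near $|S|=1$), but it reorganizes the expansion in a genuinely cleaner way. The paper works with the finite identity $\frac{1}{(1-x)^2}=1+2x+3x^2+4x^3+\frac{4x^4}{1-x}+\frac{x^4}{(1-x)^2}$, integrates, then pairs $x$ and $-x$ to estimate the remainder, and handles the cross terms of $\prod_j(1-e(2a_jy_j))$ by a separate bound on $\int e(2\ua\cdot y)/(1-S)^2$. You instead make the symmetry of $S$ explicit at the outset via the half-shift $y\mapsto y+\tfrac12\mathbf{1}$ (crucially using $v_i\in\zed$ so that each $1-c(2v_iy_i)$ is invariant), which collapses the integrand to $\prod(1-c(2v_iy_i))\cdot\sum_{n\geq 0}(2n+1)S^{2n}$; you then compute $A_0=1$ and $A_2=\tfrac{1}{2d}+O_k(d^{-2})$ exactly by a parity/frequency count (killing the cross terms at these orders) and absorb everything else into a tail bounded crudely by $5\cdot 2^k\,\E[S^4(5-3S^2)/(1-S^2)^2]$. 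Reassuringly, this is exactly the same quantity $\E[S^4(5-3S^2)/(1-S^2)^2]$ that appears in the paper after its symmetrization, and both proofs control it by splitting the range of $|S|$ and invoking both branches of the concentration lemma. The net difference: your route trades the paper's slightly sharper cross-term estimate ($O(2^k d^{-4})$) for a cruder $O(2^kd^{-2})$ absorbed into the tail, which is still within the claimed $O_k(d^{-2})$, and in exchange gains a transparent power-series structure and avoids a separate cross-term calculation.
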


\begin{proof}
 When $d \geq 5$ the Green's function is in $\ell^2(\zed^d)$.  The $\ell^2$ norm is
 \begin{equation}
  \left\|g\right\|_2^2 = \frac{1}{4d^2} \int_{(\bR/\zed)^d} \frac{dy}{\left(1 - \frac{1}{d}\left(c(y_1) + \cdots + c(y_d) \right)\right)^2}.
 \end{equation}
 Note that $\frac{1}{d} \left(c(y_1) + \cdots + c(y_d) \right)$ is the Fourier series of the measure $\mu$ of simple random walk on $\zed^d$.

When $\nu$ has reflection symmetry in $k$ hyperplanes and is supported at a single point $\ua$ in the quotient space, the 2 norm of $g*\nu$ in the quotient space is
\begin{align*}
 \left\|g*\nu\right\|_{2, \zed^d/\fS_k}^2 &= \frac{1}{4d^2} \int_{(\bR/\zed)^d} \frac{\frac{1}{2^k}\left|\prod_{j=1}^k (e(a_j y_j)-e(-a_jy_j)) \right|^2}{\left(1 - \frac{1}{d}\left(c(y_1) + \cdots + c(y_d) \right)\right)^2}dy\\
 &=\frac{1}{4d^2} \int_{(\bR/\zed)^d} \frac{\prod_{j=1}^k (1-c(2a_jy_j))}{\left(1 - \frac{1}{d}\left(c(y_1) + \cdots + c(y_d) \right)\right)^2}dy.
\end{align*}
By symmetry of the random walk,
\begin{equation}\label{2_norm_formula}
 \left\|g*\nu\right\|_{2, \zed^d/\fS_k}^2 =\frac{1}{4d^2} \int_{(\bR/\zed)^d} \frac{\prod_{j=1}^k (1-e(2a_jy_j))}{\left(1 - \frac{1}{d}\left(c(y_1) + \cdots + c(y_d) \right)\right)^2}dy.
\end{equation}

Use the formula,
\begin{equation}
 \frac{1}{(1-x)^2} = 1 + 2x + 3x^2 + \cdots + nx^{n-1} + \frac{nx^n}{1-x} + \frac{x^n}{(1-x)^2}
\end{equation}
with $x = \frac{1}{d}\left(c(y_1) + \cdots + c(y_d) \right)$.

To estimate $\|g\|_{2}^2$, write this as
\begin{align*}
 \|g\|_{2}^2 &= \frac{1}{4d^2} \left(1 + 3 \int x^2 + 4 \int \frac{x^4}{1-x} + \int \frac{x^4}{(1-x)^2} \right)\\
 &= \frac{1}{4d^2} \left(1 + \frac{3}{2d} +   \int \frac{4x^4}{1-x} + \frac{x^4}{(1-x)^2} \right).
\end{align*}
To estimate the integrals, by symmetry, pair $x$ and $-x$, so that the integrals become
\begin{align*}
 &-\int_0^1 c^4\left(\frac{8}{1-c^2} + \frac{2 + 2c^2}{(1-c^2)^2} \right)d\Prob(x \geq c) \\
 &=  \int_0^1 \frac{d}{dc}\left(\frac{c^4 (10-6c^2)}{(1-c^2)^2} \right) \Prob(x\geq c) dc\\
 &=  \int_0^1 \left(\frac{40c^3 -36 c^5}{(1-c^2)^2} + \frac{40c^5 - 24c^7}{(1-c^2)^3} \right)\Prob(x\geq c) dc\\
 & \leq  \int_0^{1-\xi} \left( \frac{40 c^3  + 12 c^7}{(1-c^2)^3} \right)e^{-\frac{dc^2}{2}} dc \\ &+\int_0^{\xi} \left( \frac{40(1-c)^3  + 12(1-c)^7}{c^3 (2-c)^3} \right)\left(\frac{\pi e c}{4} \right)^{\frac{d}{2}} dc.
\end{align*}
In the first integral, bound $\frac{1}{1-c^2} \leq \frac{1}{2\xi - \xi^2}$, then extend the integrals to $\infty$ to obtain a bound of $O(d^{-2})$.  The second integral is exponentially small in $d$.

Also, estimate, for $\ua \neq 0$, taking absolute values in the final integral,
\begin{align*}
& \frac{1}{4d^2} \int_{(\bR/\zed)^d}\frac{e(2\sum_{j=1}^k a_jy_j)}{\left(1-\frac{1}{d}\left(c(y_1)+\cdots + c(y_d) \right) \right)^2}dy \\
&= \frac{3}{4d^2} \mu^{*2}(2a_1 e_1 + \cdots +2a_k e_k) + O\left(\frac{1}{4d^2} \int \frac{x^4}{(1-x)^2} \right)\\
&= \frac{3}{16 d^4} \one(\|\ua\|_1=1) + O\left(\frac{1}{d^4} \right).
\end{align*}
By expanding the numerator of (\ref{2_norm_formula}), this implies that
\begin{equation}
 \|g*\nu\|_{2, \zed^d/\fS_k}^2 = \frac{1}{4d^2} \left(1 + \frac{3}{2d} + O_k(d^{-2})\right).
\end{equation}
\end{proof}

The following lemma evaluates $\|\xi\|_2^2$ asymptotically when the support of $\nu$ is larger.

\begin{lemma}\label{Greens_fn_evaluation_d_larger_support}
 Let $d \geq 5$.  Let $\nu = \delta_0 - \delta_{\ua}$ for some $\ua \neq 0 \in \zed^d$, and let $\xi = g*\nu$.  As $d \to \infty$,
  \begin{equation}
  \|\xi\|_{2}^2 = \left\{ \begin{array}{lll} \frac{1}{2d^2}\left(1 + \frac{1}{2d} + O\left(d^{-2}\right)\right) && \|\ua\|_1 = 1\\ \frac{1}{2d^2}\left(1 + \frac{3}{2d} + O\left(d^{-2}\right)\right) && \|\ua\|_1>1\end{array}\right..
 \end{equation}

\end{lemma}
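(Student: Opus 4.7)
The plan is to evaluate $\|\xi\|_2^2$ by Parseval, following the structure of the proof of Lemma \ref{Greens_fn_evaluation_d}.  Writing $\hat\nu(y) = 1 - e(\ua\cdot y)$, we have $|\hat\nu(y)|^2 = 2(1-c(\ua\cdot y))$, so that
\[
\|\xi\|_2^2 = \frac{1}{2d^2} \int_{(\bR/\zed)^d} \frac{1-c(\ua \cdot y)}{(1-\hat\mu(y))^2}\,dy = \frac{1}{2d^2}(I - J),
\]
where $I = \int (1-\hat\mu)^{-2}\,dy$ and $J = \int c(\ua\cdot y)(1-\hat\mu)^{-2}\,dy$.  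The integral $I$ is exactly the one analyzed in the case $k=0$ of Lemma \ref{Greens_fn_evaluation_d}, giving $I = 1 + \tfrac{3}{2d} + O(d^{-2})$.

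To handle $J$, I would apply the same truncated expansion $(1-x)^{-2} = 1 + 2x + 3x^2 + 4x^3 + \frac{4x^4}{1-x} + \frac{x^4}{(1-x)^2}$ at $x = \hat\mu(y)$ used in Lemma \ref{Greens_fn_evaluation_d}.  Fourier inversion together with the symmetry of $\mu$ gives $\int c(\ua\cdot y)\hat\mu(y)^k\,dy = \mu^{*k}(\ua)$, so
\[
J = \sum_{k=0}^{3}(k+1)\mu^{*k}(\ua) + R(\ua).
\]
Since $|c(\ua\cdot y)| \leq 1$ and the integrand $\frac{4\hat\mu^4}{1-\hat\mu} + \frac{\hat\mu^4}{(1-\hat\mu)^2}$ is non-negative, $|R(\ua)|$ is bounded by the corresponding integral with $\ua = 0$, which Lemma \ref{Greens_fn_evaluation_d} estimates as $O(d^{-2})$ via the Chernoff concentration of $\hat\mu$.

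It remains to enumerate the low-order terms.  For $\ua \neq 0$ we have $\mu^{*0}(\ua) = 0$ and $\mu^{*1}(\ua) = \frac{1}{2d}\mathbf{1}(\|\ua\|_1 = 1)$.  A direct combinatorial count of nearest-neighbor walks yields $\mu^{*2}(\ua) \leq \frac{1}{2d^2}$ and $\mu^{*3}(\ua) \leq \frac{3}{4d^2} + O(d^{-3})$ uniformly in $\ua$, so both contribute $O(d^{-2})$ to $J$.  Hence $J = \frac{1}{d}\mathbf{1}(\|\ua\|_1 = 1) + O(d^{-2})$, and substituting gives
\[
\|\xi\|_2^2 = \frac{1}{2d^2}\left(1 + \frac{3}{2d} - \frac{1}{d}\mathbf{1}(\|\ua\|_1 = 1) + O(d^{-2})\right),
\]
which is the claim of the lemma after splitting into the two cases.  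The main technical step is the uniform $O(d^{-2})$ bound on $R(\ua)$, but this presents no new obstacle: the pointwise majorant $|c(\ua \cdot y)| \leq 1$ reduces it to the tail bound already proved in Lemma \ref{Greens_fn_evaluation_d}.
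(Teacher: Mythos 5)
Your proof is correct and follows essentially the same route as the paper: Parseval plus the truncated expansion of $(1-\hat\mu)^{-2}$, with the leading terms identified as $\mu^{*k}$ convolution powers evaluated at $0$ and at $\ua$, and the remainder controlled by the concentration estimate of the preceding lemma. The only presentational difference is that you split the integral into $I-J$ and reuse the $k=0$ case of Lemma~\ref{Greens_fn_evaluation_d} for $I$, whereas the paper works directly with $\nu*\check\nu*\mu^{*j}(0) = 2\mu^{*j}(0) - 2\mu^{*j}(\ua)$, which is the same computation.
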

\begin{proof}
 As in the previous lemma, let $\hat{\mu}(y) = \frac{1}{d}(c(y_1) + \cdots + c(y_d))$.  Then, by Parseval, estimating the error as above,
 \begin{align*}
  &\|g * \nu\|_2^2 = \frac{1}{4d^2} \int_{(\bR/\zed)^d} \frac{|\hat{\nu}(y)|^2}{(1-\hat{\mu}(y))^2}dy\\
  &= \frac{1}{4d^2}\int_{(\bR/\zed)^d} |\hat{\nu}(y)|^2 \left(1 + 2\hat{\mu}(y) + 3 \hat{\mu}(y)^2 + 4 \hat{\mu}(y)^3 \right)dy + O(d^{-4}).
 \end{align*}
Let $\check{\nu}(x) = \nu(-x)$.  The integral evaluates to, by Parseval,
\begin{equation*}
 \int_{(\bR/\zed)^d} |\hat{\nu}(y)|^2 \hat{\mu}(y)^j dy = \nu * \check{\nu} * \mu^{*j}(0).
\end{equation*}
Since $\nu * \check{\nu}(0) = \|\nu\|_2^2$ and \begin{equation*}\mu^{*0}(0)+ 2\mu(0) + 3 \mu^{*2}(0) + 4 \mu^{*3}(0) = 1 + \frac{3}{2d},\end{equation*} the $\nu*\check\nu(0)$ terms contribute $\|\nu\|_2^2 \left(1 + \frac{3}{2d}\right)$.  For $\|\ua\|_1 = 1$, \begin{equation}\mu^{*0}(\ua) + 2\mu(\ua) + 3\mu^{*2}(\ua) + 4 \mu^{*3}(\ua) = \frac{1}{d} + O\left(\frac{1}{d^2} \right),\end{equation} while for $\|\ua\|_1 > 1$, the sum is $O(d^{-2})$.  Thus, for $\nu = \delta_0 - \delta_{\ua}$, \begin{equation}\nu * \check{\nu} = 2 \delta_0 - \delta_{\ua} - \delta_{-\ua}\end{equation} and, 
\begin{equation}
 \|g*\nu\|_2^2 = \frac{2}{4d^2} \left(1 + \frac{3}{2d} - \one(\|\ua\|=1)\frac{1}{d} + O\left(\frac{1}{d^2}\right) \right).
\end{equation}
\end{proof}

The results obtained by integration are to be compared with the following lower bounds for $\|\xi\|_2^2$ obtained from a convex optimization program.
\begin{lemma}\label{program_lower_bounds}
 Let $\xi = g*\nu$ be a function on $\zed^d$, $d \geq 2$ with reflection anti-symmetry in the first $k$ coordinate hyperplanes. Let the corresponding reflection group be $\fS_k$.  Consider $\xi$ and $\nu$ to be anti-symmetric functions on the quotient of $\zed^d/\fS_k$. The following bounds hold for $\|\xi\|_{2, \zed^d/\fS_k}^2$.
 
 \begin{equation}
  \|\xi\|_{2, \zed^d/\fS_k}^2 \geq \left\{ \begin{array}{lll}\frac{1}{4d^2 + 2d} && |\supp \nu| \geq 1\\ \frac{2}{4d^2 + 6d} && |\supp \nu| \geq 2, u, u+e_j \in \supp \nu\\
  \frac{2}{4d^2 + 2d+1} && |\supp \nu| \geq 2, u, u+2e_j \in \supp \nu\\
  \frac{2}{4d^2 + 2d+2} && |\supp \nu| \geq 2, u, u+e_i +e_j \in \supp \nu\\
  \frac{2}{4d^2 + 2d} && |\supp \nu| \geq 2, u, w \in \supp \nu, d(u,w)\geq 3\end{array}\right..
 \end{equation}

\end{lemma}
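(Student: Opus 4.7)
The plan is to obtain every bound by a direct Lagrange-multiplier analysis of the convex relaxation $Q'(S,\nu)$, combined with the additivity property, applied to support sets $S$ of size one or two in the quotient $\zed^d/\fS_k$. Throughout I would view $\xi$ as an anti-symmetric function on $\zed^d$ vanishing on the reflecting hyperplanes, so that the quotient $2$-norm sums $\xi_v^2$ over a fundamental domain, and I would set up $Q'$ on the interior variables of such a domain.

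First I would dispose of the single-point bound: for any $u \in \supp \nu$ the gradient $\ell_u$ of the constraint $\Delta \xi(u) = \nu_u$ takes value $2d$ at $u$, $-1$ at each interior lattice neighbor of $u$, and $0$ elsewhere, so $\|\ell_u\|_2^2 \le 2d(2d+1) = 4d^2 + 2d$ with equality when $u$ is at distance at least $2$ from every reflecting hyperplane. Lemma~\ref{2_norm_opt_lemma} then gives $\|\xi\|_{2, \zed^d/\fS_k}^2 \ge |\nu_u|^2/\|\ell_u\|_2^2 \ge 1/(4d^2+2d)$.

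For the two-point bounds I would fix $S = \{u, w\}$ in the prescribed configuration. Since $Q'$ is a strictly convex quadratic with linear constraints and its quadratic scaling lets me assume $|\nu_u| = |\nu_w| = 1$, Lagrange multipliers give a unique interior optimizer $x = \lambda_1 \ell_u + \lambda_2 \ell_w$ with $\|x\|_2^2 = \lambda_1 \nu_u + \lambda_2 \nu_w$. The inner product $\ell_u \cdot \ell_w$ is computed by counting common non-zero entries, and depends only on the combinatorial type of the pair:
\begin{equation*}
\ell_u \cdot \ell_w =
\begin{cases}
-4d & w = u + e_j,\\
1 & w = u + 2 e_j,\\
2 & w = u + e_i + e_j,\ i \neq j,\\
0 & d(u,w) \ge 3,
\end{cases}
\end{equation*}
while $\|\ell_u\|_2^2 = \|\ell_w\|_2^2 = 4d^2 + 2d$ in the interior. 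Solving the $2 \times 2$ linear system in each configuration for every sign pattern $(\nu_u, \nu_w) \in \{\pm 1\}^2$ and taking the minimum over signs reproduces the claimed bounds $\frac{2}{4d^2+6d}$, $\frac{2}{4d^2+2d+1}$, $\frac{2}{4d^2+2d+2}$, $\frac{2}{4d^2+2d}$ respectively; in the case $d(u,w) \ge 3$ the bound follows just as well from the additivity of $Q'$ applied to the disjoint $1$-neighborhoods of $u$ and $w$.

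The hard part will be confirming that proximity of $u$ or $w$ to a reflecting hyperplane does not weaken any of these bounds. A hyperplane adjacent to $u$ absorbs some of the $-1$ entries of $\ell_u$ (since $x_v = 0$ there by anti-symmetry), which strictly shrinks $\|\ell_u\|_2^2$ and, in a correlated way, $|\ell_u \cdot \ell_w|$. I expect a short case check to show that each such modification \emph{increases} the Lagrange value of $Q'(S, \nu)$ relative to the interior computation, so the lower bounds above transfer verbatim to the quotient. This boundary bookkeeping is the only step in the proof beyond routine linear algebra.
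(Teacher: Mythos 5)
Your argument is essentially the same as the paper's: a Lagrange-multiplier computation of the quadratic relaxation on support sets of size one or two, with the value $2/(N+|M|)$ where $N=\|\ell_u\|_2^2$ and $M=\ell_u\cdot\ell_w$, and additivity for the distance-$\ge 3$ case. The one cosmetic difference is that the paper uses the sign-relaxed program $P$ (all-positive constraint coefficients, inequality constraints, $\nu\equiv 1$), which makes $v_1\cdot v_2=4d,1,2,0\ge 0$ in the four cases and eliminates the need to range over the sign patterns $(\nu_u,\nu_w)\in\{\pm 1\}^2$; you instead keep the signed Laplacian gradients, get $M=-4d,1,2,0$, and take the minimum over sign patterns, which gives the same numbers.

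The one place you leave a genuine hole is the boundary bookkeeping, which you flag but do not close (``I expect a short case check\ldots''). In fact it requires no case check at all, and is worth making explicit. First, since $u\in\supp\nu$ forces $u$ to lie off every reflecting hyperplane, the $1$-neighborhood $B(S)$ lies in the closed fundamental domain, so the quotient $2$-norm dominates $\sum_{w\in B(S)}\xi_w^2$. Second, the quotient version of the program $Q'(S,\nu)$ is obtained from the interior version by \emph{adding} the equality constraints $x_v=0$ at the hyperplane points of $B(S)$; adding constraints can only increase the minimum of a minimization, so the interior Lagrange value is automatically a lower bound and no comparison of $\|\ell_u\|_2^2$ or $\ell_u\cdot\ell_w$ between boundary and interior is needed. (The paper's own relaxation handles this equally cleanly: for the quotient minimizer $\xi$, the vector $|\xi|$ restricted to $B(S)$ is feasible for the interior $P$ program since the hyperplane entries are zero, so $P\le\|\xi\|_{2,\zed^d/\fS_k}^2$ directly.) With that observation inserted, your proof is complete.
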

\begin{proof}
 Denote $P(\{0\}, 1)$ the optimization program
 \begin{align*}
\text{minimize:} &\qquad  \sum_{d(w, \{0\}) \leq 1} x_w^2\\
\text{subject to:} &\qquad 2dx_0 + \sum_{i=1}^d x_{e_i} + x_{-e_i} \geq 1\\ &\qquad |x_w| \leq \frac{1}{2},
\end{align*}
 which is a lower bound for the first quantity.  This program has value $\frac{1}{(2d)(2d+1)}$, since the optimum occurs at an interior point, and is achieved by $x_0 = \frac{1}{2d+1}$, $x_w = \frac{1}{(2d)(2d+1)}$ for $d(w,0)=1$.  

 In the last case, two translated copies of $P(\{0\}, 1)$ may be applied, one at each point in the support.

In the remaining cases, a lower bound for $\|\xi\|_{2, \zed^d/\fS_k}^2$ is given by setting, for $u = e_1, 2e_1, e_1 + e_2$, $\nu_0 = \nu_u = 1$, and calculating  $P(\{0, u\}, 1)$
\begin{align*}
\text{minimize:} &\qquad  \sum_{d(w, \{0, u\}) \leq 1} x_w^2\\
\text{subject to:} &\qquad 2dx_0 + \sum_{i=1}^d x_{e_i} + x_{-e_i} \geq 1\\
&\qquad 2d x_{u} + \sum_{i=1}^d x_{u + e_i} + x_{u-e_i}\geq 1\\
& \qquad |x_w| \leq \frac{1}{2}.
\end{align*}
The optimum in this case is achieved at an interior point since the values on the boundary are at least $\frac{1}{4}$, which exceeds the claimed bound. At an interior point, by Lagrange multipliers the optimum takes the form
 \begin{equation}
  x = \lambda_1 v_1 + \lambda_2 v_2
 \end{equation}
where $v_1$ and $v_2$ are the gradients of the two linear constraints. The linear system $v_1^t (\lambda_1 v_1 + \lambda_2 v_2) = 1$, $v_2^t (\lambda_1 v_1 + \lambda_2 v_2) = 1$ is symmetric in $\lambda_1, \lambda_2$ and has a unique solution with $\lambda=\lambda_1 = \lambda_2 = \frac{1}{\|v_1\|_2^2 + v_1^t v_2}$.  Thus
\begin{equation}
 \|x\|_2^2 = 2\lambda^2 (\|v_1\|_2^2 + v_1^t v_2) = \frac{2}{\|v_1\|_2^2 + v_1^t v_2}. 
\end{equation}
Since $\|v_1\|_2^2 = 4d^2 +2d$ and $v_1^t v_2$ has value $4d, 1, 2$ in the three cases considered, the claim follows.

\end{proof}

\begin{lemma}\label{program_lower_bounds_larger}
 Let $\xi = g*\nu$ be a function on $\zed^d$, $d \geq 5$ satisfying $|\supp \nu| \geq 3$.  Then
 \begin{equation}
  \|\xi\|_2^2 \geq \frac{3}{4d^2 + 10 d}.
 \end{equation}
\end{lemma}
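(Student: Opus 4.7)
The plan is to mirror the strategy used for $\Dfour$ in Lemma~\ref{3_pt_support_lemma}. Since $|\supp \nu| \geq 3$, pick three points $z_1, z_2, z_3 \in \supp \nu$ and consider the relaxed positive-constraint program $P'(\{z_1, z_2, z_3\}, 1)$. Setting $y_w = |\xi_w|$, the triangle inequality applied to $\Delta \xi(z_i) = \nu_{z_i}$ shows that $y$ is feasible: each constraint $2d\, y_{z_i} + \sum_{w \sim z_i} y_w \geq |\nu_{z_i}| \geq 1$ holds, while $\sum_{d(w, \{z_i\}) \leq 1} y_w^2 \leq \|\xi\|_2^2$, so $\|\xi\|_2^2 \geq P'(\{z_1, z_2, z_3\}, 1)$.

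The optimum of $P'$ is attained in the interior of the cube $|x_w| \leq \tfrac{1}{2}$, because any boundary value contributes at least $\tfrac{1}{4}$ to the objective, exceeding the claimed bound $\tfrac{3}{4d^2+10d}$. Applying Lagrange multipliers then gives $x = \lambda_1 v_1 + \lambda_2 v_2 + \lambda_3 v_3$ with multipliers $\lambda_i \geq 0$, where $v_i$ denotes the gradient of the $i$th linear constraint, carrying entry $2d$ at $z_i$ and entry $+1$ at each of its $2d$ neighbors. Writing $M = V^t V$ with $V = [v_1 \mid v_2 \mid v_3]$, the constraints read $M\lambda \geq \mathbf{1}$ componentwise, so $\|x\|_2^2 = \lambda^t M \lambda \geq \mathbf{1}^t \lambda$ using $\lambda \geq 0$.

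The estimate is completed in the style of Lemma~\ref{3_pt_support_lemma}: from $M\lambda \geq \mathbf{1}$ we deduce
\[
3 \;\leq\; \mathbf{1}^t M \lambda \;=\; (M \mathbf{1})^t \lambda \;\leq\; \|M \mathbf{1}\|_\infty \cdot \mathbf{1}^t \lambda \;\leq\; \|M \mathbf{1}\|_\infty \cdot \|x\|_2^2.
\]
It remains to bound the row sums of $M$. The diagonal satisfies $\|v_i\|_2^2 = (2d)^2 + 2d = 4d^2 + 2d$, and a case check by graph distance shows $v_i^t v_j \leq 4d$: adjacent lattice points in $\zed^d$ have no common nearest neighbor, so $v_i^t v_j$ receives only the contributions $2d$ at $z_i$ (where $v_j = 1$) and $2d$ at $z_j$ (where $v_i = 1$); pairs at distance $2$ contribute at most $2$ (one per common midpoint), and pairs at distance $\geq 3$ contribute $0$. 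Hence $\|M\mathbf{1}\|_\infty \leq (4d^2+2d) + 2(4d) = 4d^2 + 10d$, giving $\|x\|_2^2 \geq \tfrac{3}{4d^2+10d}$.

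The only genuine subtlety is the off-diagonal computation $v_i^t v_j \leq 4d$; once this geometric feature of $\zed^d$ is established, the rest is routine linear algebra matching the template of Lemma~\ref{3_pt_support_lemma}. The $10d$ in the denominator originates precisely from the worst case of two adjacent prevector nodes.
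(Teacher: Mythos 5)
Your proof is correct and follows essentially the same route as the paper's: relax to the positive-constraint program at three support points, invoke Lagrange multipliers to write $x=\sum_i\lambda_i v_i$ with $\lambda_i\ge 0$, and combine $\|x\|_2^2\ge\lambda_1+\lambda_2+\lambda_3$ with the row-sum bound $\|v_i\|_2^2+\sum_{j\ne i}v_i^tv_j\le 4d^2+10d$ coming from $v_i^tv_j\le 4d$. Your matrix formulation via $\|M\mathbf{1}\|_\infty$ and your explicit case check of the off-diagonal inner products are just cleaner write-ups of the identical computation.
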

\begin{proof}
 After translation, suppose that $0, u_1, u_2$ are in the support.  A lower bound for the 2 norm is given by the value of the optimization program $P(\{0, u_1, u_2\}, 1)$
  \begin{align*}
\text{minimize:} &\qquad  \sum_{d(w, \{0, u_1, u_2\}) \leq 1} x_w^2\\
\text{subject to:} &\qquad 2dx_0 + \sum_{i=1}^d x_{e_i} + x_{-e_i} \geq 1\\ & \qquad 2 dx_{u_1} + \sum_{i=1}^d x_{u_1 + e_i} + x_{u_1 - e_i} \geq 1\\ & \qquad 2 dx_{u_2} + \sum_{i=1}^d x_{u_2 + e_i} + x_{u_2 - e_i}\geq 1.
\end{align*}
Let $x$ be the set of variables and write the constraints as $v_1^t x \geq 1$, $v_2^t  x \geq 1$, $v_3^t x \geq 1$. By Lagrange multipliers, the optimum occurs at $x = \lambda_1 v_1 + \lambda_2 v_2 + \lambda_3 v_3$.  Since the distance 1 neighborhoods of $0, u_1,u_2$ pairwise overlap in at most 2 points, each neighborhood has some variable not shared by the others.  Since the optimum occurs with all variables non-negative, $\lambda_1, \lambda_2, \lambda_3 \geq 0$.  

We have $\|v_i\|_2^2 = 4d^2 + 2d$ and for $i \neq j$, $v_i^t v_j \leq 4d$.  Adding the three constraints, 
\begin{equation}
 (v_1^t + v_2^t + v_3^t)(\lambda_1 v_1 + \lambda_2 v_2 + \lambda_3 v_3) \geq 3.
\end{equation}
Hence $(\lambda_1 + \lambda_2 + \lambda_3) \geq \frac{3}{4d^2 + 10d}.$  Since
\begin{align*}
 \|x\|_2^2 &= \left(\lambda_1 v_1^t + \lambda_2 v_2^t + \lambda_3 v_3^t\right)(\lambda_1 v_1 + \lambda_2 v_2 + \lambda_3 v_3)\\& \geq
 \lambda_1 v_1^t (\lambda_1 v_1 + \lambda_2 v_2 + \lambda_3 v_3) + \lambda_2 v_2^t (\lambda_1 v_1 + \lambda_2 v_2 + \lambda_3 v_3) \\& \qquad+ \lambda_3 v_3^t (\lambda_1 v_1 + \lambda_2 v_2 + \lambda_3 v_3) \\ & \geq \lambda_1 + \lambda_2 + \lambda_3 \\&\geq \frac{3}{4d^2 + 10 d}.
\end{align*}

\end{proof}

Combining the estimates, it is now possible to prove Theorem \ref{asymptotic_theorem}.

\begin{proof}[Proof of Theorem \ref{asymptotic_theorem}]
 In determining $\gamma_{\zed^d}$, $\nu = \Delta \xi$ is $C^1$, and hence $|\supp \nu| \geq 2$.  Using $1- c(\xi_x) = 2\pi^2 \xi_x^2 + O(\xi_x^4)$ it follows that $f(\xi) = 2\pi^2 \|\xi\|_2^2 + O(\|\xi\|_2^4)$. By Lemma \ref{program_lower_bounds_larger}, if $|\supp \nu| \geq 3$ then $\|\xi\|_2^2 \geq \frac{3}{4d^2 + 10d}$.  Combining with Lemma \ref{Greens_fn_evaluation_d_larger_support}, for all $d$ sufficiently large the optimum is achieved by $\nu = \delta_0 - \delta_{e_1}$ with
 $\|\xi\|_2^2 = \frac{1}{2d^2}\left(1 + \frac{1}{2d} + O\left(d^{-2}\right)\right)$.  Hence
 \begin{equation}
  \gamma_{\zed^d} = \frac{\pi^2}{d^2}\left(1 + \frac{1}{2d} + O\left(d^{-2}\right)\right).
 \end{equation}

By Lemma \ref{program_lower_bounds}, it follows that if $|\supp \nu| \geq 2$ for the extremal prevector, then $\|g*\nu\|_2^2 \geq \frac{1}{2d^2} + O(d^{-3})$. Thus, asymptotically in $d$, the extremum is achieved with $\nu$ a point mass. 
Approximating $1-c(\xi_x) = 2\pi^2 \xi_x^2 + O(\xi_x^4)$, it follows from Lemma \ref{Greens_fn_evaluation_d} that, as $d \to \infty$, for each $j$,
\begin{equation}
 \gamma_{\zed^d,j} = \frac{\pi^2}{2d^2} \left(1 + \frac{3}{2d} + O_j(d^{-2})\right)
\end{equation}
and, uniformly in $j$, by the first estimate of Lemma \ref{program_lower_bounds},
\begin{equation}
 \gamma_{\zed^d, j} \geq \frac{\pi^2}{2d^2 + d}(1 + O(d^{-2})).
\end{equation}
It follows that, for all $j$,
\begin{equation}
 \Gamma_j \leq \frac{(d-j)(2d^2 + d + O(1))}{\pi^2} 
\end{equation}
and for each fixed $j$,
\begin{equation}
 \Gamma_j = \frac{(d-j) (2d^2 -3d + O_j(1))}{\pi^2}.
\end{equation}
In particular, $\Gamma = \Gamma_0 = \frac{2d^3 -3d^2 + O(d)}{\pi^2}$ for all $d$ sufficiently large.

\end{proof}

\bibliographystyle{plain}

\end{document}